\documentclass[11pt,a4paper]{article}%
\usepackage{amscd}
\usepackage{amsmath}
\usepackage{graphicx}
\usepackage{amsfonts}
\usepackage{amssymb}
\usepackage{xcolor}
\usepackage{amsbsy,amsthm}
\usepackage{pifont}
\usepackage{fancyhdr,ifthen}
\usepackage[colorlinks=true, bookmarksnumbered=true, bookmarksopen=true,
bookmarksopenlevel=3, pdfstartview=FitH, linkcolor=cyan, pdfmenubar=true,
pdftoolbar=true, bookmarks=true,citecolor=cyan, urlcolor=magenta,
filecolor=cyan,menucolor=black,plainpages=false,pdfpagelabels]{hyperref}%
\DeclareMathOperator{\dv}{div}

\newtheorem{theorem}{Theorem}[section]
\newtheorem{lemma}[theorem]{Lemma}

\newtheorem{proposition}[theorem]{Proposition}
\newtheorem{definition}[theorem]{Definition}
\newtheorem{remark}[theorem]{Remark}

\numberwithin{equation}{section}

\begin{document}

\title{On product estimates in Besov-Morrey spaces and stationary solutions for the
Hall-MHD and Navier-Stokes systems\vspace{0.7cm} }
\author{{\Large {Lucas C. F. Ferreira\thanks{State University of Campinas (Unicamp),
IMECC-Department of Mathematics, Rua S\'{e}rgio Buarque de Holanda, CEP
13083-859, Campinas, SP, Brazil. Email:\ lcff@ime.unicamp.br (corresponding
author).} \ \ and \ Rafael P. da Silva\thanks{Federal University of Technology
- Paran\'{a} (UTFPR), Department of Mathematics, CEP 86300-000, Corn\'{e}lio
Proc\'{o}pio-PR, Brazil. Email:\ rpsilva@utfpr.edu.br.}}}\vspace{0.5cm}}
\date{}
\maketitle

\begin{abstract}
In this paper, we investigate the well-posedness of the three-dimensional
stationary incompressible resistive-viscous Hall magnetohydrodynamic
(Hall-MHD) system within the framework of scale-invariant Besov-Morrey spaces.
A key component of our analysis is the derivation of product estimates in
these spaces, which are instrumental in handling the system's nonlinear terms.
As an application, we also establish a well-posedness result for the
stationary Navier-Stokes equations in this setting, thereby broadening the
admissible classes of external forces and solutions.

{\small \bigskip\noindent\textbf{AMS MSC:} 35Q35; 76W05; 76D05; 76D03; 42B35;
42B37}

{\small \medskip\noindent\textbf{Keywords:} Hall-magnetohydrodynamics system;
Navier-Stokes equations; Stationary solutions; Well-posedness; Critical
regularity; Singular forces; Besov-Morrey spaces}

\

\end{abstract}

\pagestyle{fancy} \fancyhf{} \renewcommand{\headrulewidth}{0pt}
\chead{\ifthenelse{\isodd{\value{page}}}{Lucas C. F. Ferreira and Rafael P. da Silva}{Stationary
Hall-MHD system and Navier-Stokes equations}} \rhead{\thepage}

\section{Introduction}

We are chiefly centered on examining the 3D stationary
Hall-magnetohydrodynamics (Hall-MHD) system
\begin{equation}
\left\{
\begin{array}
[c]{rclll}%
(u\cdot\nabla)u-\mu\Delta u-(\nabla\times B)\times B+\nabla\phi & = & f_{1}, &
\text{in} & \,\,\mathbb{R}^{3};\\
-\nu\Delta B-\nabla\times((u-h(\nabla\times B))\times B) & = & f_{2}, &
\text{in} & \,\,\mathbb{R}^{3};\\
\dv u=\dv B & = & 0, & \text{in} & \,\,\mathbb{R}^{3}.
\end{array}
\right.  \label{MHD}%
\end{equation}
Here, the unknown vector fields $u=u(x)$ and $B=B(x)$ denote the velocity and
magnetic fields, respectively, while $\phi=\phi(x)$ is the scalar pressure
function. Additionally, the fields $f_{1}=f_{1}(x)$ and $f_{2}=f_{2}(x)$
represent given external forces. The positive constants $\mu,\nu$ correspond
to the dynamic viscosity and the magnetic resistivity, respectively. Moreover,
the dimensionless parameter $h$, also positive, measures the Hall effect
intensity in the fluid.

Modeling electrically conducting fluids leads to the formulation of system
(\ref{MHD}), which is applicable to diverse physical scenarios, such as
geo-dynamo processes, neutron stars, and plasma magnetic reconnection (refer
to \cite{Acher-Liu}, \cite{Lighthill} and additional sources cited therein).
When juxtaposed with the standard MHD system (see \cite{Sermange}), our system
incorporates the Hall electric field $hJ\times B$, where the current $J$ is
defined as $\nabla\times B$. This, in turn, generates the Hall term
$h\nabla\times((\nabla\times B)\times B)$ present in (\ref{MHD})$_{2}$, which
introduces an inherent difficulty in the direct application of techniques
established for the study of Navier-Stokes equations, classical MHD equations,
and other related fluid dynamics models, a fact that is due to the
specificities of their mathematical structures and restrictions. Readers
seeking further insights into the physical background of (\ref{MHD}) are
directed to \cite{Balbus-Terquem, Forbes, Huba, Mininni, Shalybkov-Urpin,
Wardle}.

Regarding the non-stationary Hall-MHD system, we have a starting point with
the work \cite{Acher-Liu} developed by Acheritogaray \textit{et al.}, in which
they presented a physical-mathematical deduction of the Hall-MHD system and
established the existence of global-in-time weak solutions in $L^{2}%
([0,1]^{3})$. From there, a series of studies have been carried out on
well-posedness, regularity, and asymptotic behavior of solutions. In
\cite{Chae-Degond-Liu}, Chae \textit{et al.} achieved results in local and
global well-posedness for initial data in $H^{s}(s>5/2)$, whereas Dumas and
Sueur \cite{Dumas-Sueur} engaged in further research on weak solutions. The
global existence of strong solutions for sufficiently small initial data and
Serrin-type continuation criteria was demonstrated by Chae and Lee
\cite{Chae-Lee} and Ye \cite{Ye}, while Ahmad \textit{et al. }%
\cite{Ahmad-Zhou} indicated that the velocity can be used to control the
potential blow-up of smooth solutions. Furthermore, Benvenutti and Ferreira
\cite{Benvenutti-Ferreira}, Wu \textit{et al.} \cite{Wu-Yu-Tang}, and Wan and
Zhou \cite{Wan-Zhou} provided results on the existence and uniqueness of
strong solutions with less regularity initial data in Sobolev or Besov spaces.
In more recent research, Danchin and Tan \cite{Danchin-Tan, Danchin-Tan-2}
worked in the framework of critical Sobolev and Besov spaces, having obtained
local and global-in-time well-posedness results. Furthermore, Ferreira and da
Silva \cite{Ferreira-da Silva} extended the analysis of the non-stationary
Hall-MHD system to Besov-Morrey spaces by proving well-posedness results.
Several other papers have also contributed to this topic, including
\cite{An-Chen-Han, Fujii, Han-Hu-Lai, Kawashima-Nakasato-Ogawa, Li-Yu-Zhu,
Liu-Tan, Nakasato, Wan-Zhou-2, Zhang}.

When it comes to the stationary system (\ref{MHD}), what we predominantly find
in the literature are Liouville type theorems. In this context, we can mention
the works \cite{Chae-Kim-Wolf, Chae-Wolf, Cho-Neustupa-Yang, Li-Niu, Li-Su,
Liu, Wang-Yang, Yuan-Xiao}. In contrast, relatively few studies seem to
address the problem of existence and uniqueness of solutions with given
external forces. With respect to this approach, we have the recent paper
\cite{Tan-Tsurumi-Zhang} by Tan \textit{et al.}, where a well-posedness result
was established for external forces $f_{1}\in\dot{B}_{p,1}^{\frac{3}{p}-3}$
and $f_{2}\in\dot{B}_{p,r}^{\frac{3}{p}-3}\cap\dot{B}_{p,1}^{\frac{3}{p}%
-2},r\in\lbrack1,2]$.

Against this background, the present paper primarily aims to analyze the
stationary Hall-MHD system within a new functional framework known as
homogeneous Besov-Morrey spaces (BM-spaces), denoted by $\mathcal{N}%
_{p,q,r}^{s}$. Pioneered by Kozono and Yamazaki \cite{Kozono-Yamazaki} in the
1990s, these spaces fall under the class of Besov spaces, with Morrey spaces
$\mathcal{M}_{q}^{p}$ acting as the underlying base. They provide an effective
extension that integrates key features of both Besov and Morrey spaces,
allowing for a more comprehensive framework that encompasses the regularity
properties inherent in Besov spaces alongside the structural features of
Morrey spaces. Initially employed in \cite{Kozono-Yamazaki} for the analysis
of the Navier-Stokes equations via the Kato approach (see also
\cite{Mazzucato}), these spaces have significantly contributed to the study of
other fluid dynamics models and PDEs, as highlighted in sources like
\cite{Bie-Wang-Yao, Duarte-Ferreira-Villa, Ferreira-Perez, Ferreira-Postigo,
Xu-Tan, Yang-Fu-Sun}.

Specifically, we establish a well-posedness result for (\ref{MHD}) in critical
BM-spaces, with external forces $f_{1}$ and $f_{2}$ belonging to
$\mathcal{N}_{p,q,1}^{\frac{3}{p}-3}$ and $\mathcal{N}_{p,q,r}^{\frac{3}{p}%
-3}\cap\mathcal{N}_{p,q,1}^{\frac{3}{p}-2}$($1\leq r\leq2$), respectively.
Given that $\mathcal{N}_{p,q,r}^{s}$ is strictly larger than the standard
homogeneous Besov space $\dot{B}_{p,r}^{s}$ when $q<p$, $r\in\lbrack1,\infty]$
and $s\in\mathbb{R},$ this enables us to incorporate a broader range of
external forces.

Differing from classical stationary MHD ($h=0$), the system in question lacks
scaling invariance as a result of the coexistence of the Lorentz force in
(\ref{MHD})$_{1}$ and the Hall term in (\ref{MHD})$_{2}$. The simultaneous
presence of these factors demands an accurate comprehension of critical
regularity. To address this, we follow the approach proposed by Danchin and
Tan \cite{Danchin-Tan-2} for the non-stationary case, decoupling the system by
taking $u\equiv0$ and $B\equiv0$ in (\ref{MHD}). This leads us to the
stationary incompressible Navier-Stokes equations and the stationary Hall equations%

\begin{equation}
\left\{
\begin{array}
[c]{rcll}%
-\mu\Delta u+(u\cdot\nabla)u+\nabla\phi & = & f, & \\
\dv u & = & 0, &
\end{array}
\right.  \label{aux-NS100}%
\end{equation}
and
\begin{equation}
-\nu\Delta B+h\nabla\times((\nabla\times B)\times B)=g, \label{aux-Hall100}%
\end{equation}
where here, for the sake of simplicity, $f_{1}$ and $f_{2}$ were replaced by
$f$ and $g$, respectively. These problems have the respective scaling maps
\[
(u,\phi,f)(x)\rightarrow(\lambda u,\lambda^{2}\phi,\lambda^{3}f)(\lambda
x)\text{ and }(B,g)(x)\rightarrow(B,\lambda^{2}g)(\lambda x).
\]
Therefore, for $h>0$, one might naturally expect $u$ and $\nabla B$ to have
the same level of homogeneity and, consequently, the same regularity. However,
in the classical MHD system ($h=0$)$,$ both $u$ and $B$ share the same homogeneity.

To ensure compatibility in terms of scaling, Danchin and Tan
\cite{Danchin-Tan-2} introduced the current function $J=\nabla\times B$. A
fact that, combined with the condition $\dv B=0,$ results in $\Delta
B=-(\nabla\times J)$. So, taking the operator $curl^{-1}:= (-\Delta
)^{-1}\nabla\,\times$, which can be regarded as the Fourier multiplier
operator with symbol $i|\xi|^{-2}\xi\,\times$, the field $B$ can be expressed as%

\begin{equation}
B=(-\Delta)^{-1}\nabla\times J=curl^{-1}J. \label{B-identity}%
\end{equation}
As a result, one arrives at the so-called extended stationary Hall-MHD system%

\begin{align}
(u\cdot\nabla)u-\mu\Delta u-J\times B+\nabla\phi &  =f_{1},\label{Ext.HMHD1}\\
-\nabla\times((u-hJ)\times B)-\nu\Delta B  &  =f_{2},\label{Ext.HMHD2}\\
-\nabla\times(\nabla\times((u-hJ)\times curl^{-1}J))-\nu\Delta J  &
=\nabla\times f_{2},\label{Ext.HMHD3}\\
\dv u=\dv B  &  =0, \label{Ext.HMHD4}%
\end{align}
which exhibits the scaling-invariant property
\begin{equation}
(u,B,J)(x)\rightarrow\lambda(u,B,J)(\lambda x),\,\,\,\phi(x)\rightarrow
\lambda^{2}\phi(\lambda x),\,\,\,(f_{1},f_{2},\nabla\times f_{2}%
)(x)\rightarrow\lambda^{3}(f_{1},f_{2},\nabla\times f_{2})(\lambda x),
\label{scal-ExHMHD-1}%
\end{equation}
providing the advantage of aligning with the homogeneity of the stationary
incompressible Navier-Stokes equations (\ref{aux-NS100}).

To conduct an existence-uniqueness analysis for (\ref{Ext.HMHD1}%
)-(\ref{Ext.HMHD4}) in the framework of critical BM-spaces, which will allow
to achieve the desired results for system (\ref{MHD}), it is essential to
establish suitable product estimates that enable us to handle the
corresponding non-linear terms. Motivated by this, our first aim is to prove
the product estimates outlined below, as we have not been able to find in the
literature estimates that fully meet our needs (see further details in Remark
\ref{product estimates relevance}). From now on, when dealing with contexts
that extend to arbitrary dimensions, we denote the dimension of the spatial
variable by $n\in\mathbb{N}$.

\begin{proposition}
\label{Product:3/p-1} Let $n\geq2,$ $1\leq q\leq p<n,$ $1\leq r\leq\infty,$
and $s\in\mathbb{R}$. Then, $\exists C=C(p,q,n)>0$ such that
\[
\Vert uv\Vert_{\mathcal{N}_{p,q,r}^{\frac{n}{p}-1}}\leq C\Vert u\Vert
_{\mathcal{N}_{p,q,r}^{\frac{n}{p}-1}}\Vert v\Vert_{\mathcal{N}_{p,q,\infty
}^{\frac{n}{p}}\cap L^{\infty}},
\]
for all $u\in\mathcal{N}_{p,q,r}^{\frac{n}{p}-1}$ and $v\in\mathcal{N}%
_{p,q,\infty}^{\frac{n}{p}}\cap L^{\infty}$. In particular, for $u\in
\mathcal{N}_{p,q,r}^{\frac{n}{p}-1}$ and $v\in\mathcal{N}_{p,q,1}^{\frac{n}%
{p}}$, we have
\[
\Vert uv\Vert_{\mathcal{N}_{p,q,r}^{\frac{n}{p}-1}}\leq\tilde{C}\Vert
u\Vert_{\mathcal{N}_{p,q,r}^{\frac{n}{p}-1}}\Vert v\Vert_{\mathcal{N}%
_{p,q,1}^{\frac{n}{p}}},
\]
for some constant $\tilde{C}=\tilde{C}(p,q,n)>0$.
\end{proposition}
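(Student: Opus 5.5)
We argue by Bony's paraproduct decomposition, working with the homogeneous Littlewood--Paley blocks $\dot\Delta_j$ and low-frequency cut-offs $\dot S_j=\sum_{k\le j-1}\dot\Delta_k$ that define $\mathcal{N}^{s}_{p,q,r}$. We write
\[
uv=T_u v+T_v u+R(u,v),\qquad T_u v=\sum_j \dot S_{j-1}u\,\dot\Delta_j v,\quad R(u,v)=\sum_{|j-k|\le1}\dot\Delta_j u\,\dot\Delta_k v,
\]
and estimate the three pieces in $\mathcal{N}^{\frac np-1}_{p,q,r}$. The tools are the Hölder inequality in Morrey spaces, $\|fg\|_{\mathcal{M}^{p}_{q}}\le \|f\|_{\mathcal{M}^{p}_{q}}\|g\|_{L^\infty}$ and $\|fg\|_{\mathcal{M}^{p/2}_{q/2}}\le\|f\|_{\mathcal{M}^{p}_{q}}\|g\|_{\mathcal{M}^{p}_{q}}$, together with the Bernstein inequality in Morrey spaces: $\|\dot\Delta_j f\|_{\mathcal{M}^{p_2}_{q_2}}\le C2^{jn(\frac1{p_1}-\frac1{p_2})}\|\dot\Delta_j f\|_{\mathcal{M}^{p_1}_{q_1}}$ whenever $p_1\le p_2$ and $q_1/p_1=q_2/p_2$. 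We also use the standard fact that a series of functions whose Fourier transforms lie in balls $|\xi|\lesssim 2^j$ is controlled in $\mathcal{N}^{s}$ for $s>0$ by the $\ell^r$ norm of $2^{js}$ times the block norms.

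\textbf{Term $T_v u$.} Since $\|\dot S_{j-1}v\|_{L^\infty}\le C\|v\|_{L^\infty}$ and the summand is spectrally localized in an annulus $\sim 2^j$, we get
\[
2^{j(\frac np-1)}\|\dot\Delta_j(T_vu)\|_{\mathcal{M}^p_q}\lesssim \|v\|_{L^\infty}\sum_{|k-j|\le 4}2^{k(\frac np-1)}\|\dot\Delta_k u\|_{\mathcal{M}^p_q},
\]
which is bounded in $\ell^r$ by $\|v\|_{L^\infty}\|u\|_{\mathcal{N}^{\frac np-1}_{p,q,r}}$ for every regularity index.

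\textbf{Term $T_u v$.} Here the regularity $\frac np-1$ is negative, so we bound $\|\dot S_{j-1}u\|_{L^\infty}$ by Bernstein ($\mathcal{M}^p_q\to L^\infty$ costs $2^{kn/p}$):
\[
\|\dot S_{j-1}u\|_{L^\infty}\lesssim\sum_{k\le j-2}2^{k\frac np}\|\dot\Delta_k u\|_{\mathcal{M}^p_q}=\sum_{k\le j-2}2^{k}\,2^{k(\frac np-1)}\|\dot\Delta_ku\|_{\mathcal{M}^p_q}.
\]
Then
\[
2^{j(\frac np-1)}\|\dot S_{j-1}u\,\dot\Delta_jv\|_{\mathcal{M}^p_q}\lesssim \Big(2^{j\frac np}\|\dot\Delta_jv\|_{\mathcal{M}^p_q}\Big)\sum_{k\le j-2}2^{(k-j)}\,2^{k(\frac np-1)}\|\dot\Delta_ku\|_{\mathcal{M}^p_q}.
\]
The first factor is bounded by $\|v\|_{\mathcal{N}^{n/p}_{p,q,\infty}}$. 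The second is a convolution of the sequence $2^{k(\frac np-1)}\|\dot\Delta_ku\|_{\mathcal{M}^p_q}$ with the $\ell^1$ kernel $2^{-m}\mathbf 1_{m\ge 2}$, so Young's inequality gives the $\ell^r$ bound $\|u\|_{\mathcal{N}^{\frac np-1}_{p,q,r}}\|v\|_{\mathcal{N}^{n/p}_{p,q,\infty}}$. This uses only $\frac np-1<\frac np$, i.e.\ the gain of one derivative.

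\textbf{Remainder $R(u,v)$.} This is the main obstacle, since the sum of the regularities is $\frac{2n}{p}-1$ and the output is not localized in annuli. We estimate $\dot\Delta_k u\,\tilde\Delta_k v$ in $\mathcal{M}^{p/2}_{q/2}$ by Hölder (admissible since $q\ge1$ and $q\le p$, with $p/2\ge q/2$; if $q<2$ one instead uses $L^\infty$ for $v$ and stays in $\mathcal{M}^p_q$, see the last paragraph). This gives a piece with Fourier support in a ball $\lesssim 2^k$ and $\mathcal{M}^{p/2}_{q/2}$-norm at most $2^{-k(\frac{2n}{p}-1)}a_kb_k$, where $a_k=2^{k(\frac np-1)}\|\dot\Delta_ku\|_{\mathcal{M}^p_q}\in\ell^r$ and $b_k=2^{k\frac np}\|\tilde\Delta_kv\|_{\mathcal{M}^p_q}\in\ell^\infty$. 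Since $\frac{2n}{p}-1>0$, the ball-supported series lies in $\mathcal{N}^{\frac{2n}{p}-1}_{p/2,q/2,r}$. We then apply the Besov--Morrey embedding $\mathcal{N}^{\frac{2n}{p}-1}_{p/2,q/2,r}\hookrightarrow\mathcal{N}^{\frac np-1}_{p,q,r}$, which follows from Bernstein with the ratio $q/p$ preserved. When $q/2<1$ the Morrey space $\mathcal{M}^{p/2}_{q/2}$ is not normed and Bernstein is delicate. To avoid this, we instead put $\tilde\Delta_kv$ in $L^\infty$ and keep $\dot\Delta_ku$ in $\mathcal{M}^p_q$. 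The resulting series $\sum_k \dot\Delta_ku\tilde\Delta_kv$ then has $\mathcal{M}^p_q$ terms bounded by $2^{-k(\frac np-1)}a_k\|v\|_{L^\infty}$ and ball-localized spectra. Summing requires a positive index, which $\frac np-1>0$ supplies, because $p<n$. This is exactly where the hypothesis $p<n$ enters, and it yields the bound $\|u\|_{\mathcal{N}^{\frac np-1}_{p,q,r}}\|v\|_{L^\infty}$. This second route is the one we adopt, since it is uniform in $q\ge1$.

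\textbf{The particular case.} It follows from the first estimate via the embeddings $\mathcal{N}^{n/p}_{p,q,1}\hookrightarrow\mathcal{N}^{n/p}_{p,q,\infty}$ and $\mathcal{N}^{n/p}_{p,q,1}\hookrightarrow L^\infty$. The latter is proved by Bernstein: $\|\dot\Delta_jv\|_{L^\infty}\lesssim 2^{jn/p}\|\dot\Delta_jv\|_{\mathcal{M}^p_q}$, followed by summation in $j$.
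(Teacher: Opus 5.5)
Your proposal is correct and follows essentially the same route as the paper. Both arguments use Bony's decomposition: $T_vu$ is controlled by $\|v\|_{L^\infty}$, and $T_uv$ is controlled through $\|u\|_{\dot{B}^{-1}_{\infty,r}}$ with the $\ell^1$ kernel $2^{-m}\mathbf{1}_{m\geq 2}$ (you derive this bound inline via Bernstein rather than citing the embedding $\mathcal{N}^{\frac{n}{p}-1}_{p,q,r}\hookrightarrow\dot{B}^{-1}_{\infty,r}$). The remainder $R(u,v)$ is summed using $\frac{n}{p}-1>0$, exactly where $p<n$ enters; the only difference there is that you bound $\tilde{\Delta}_kv$ by $\|v\|_{L^\infty}$ while the paper uses $\|v\|_{\dot{B}^{0}_{\infty,\infty}}\lesssim\|v\|_{\mathcal{N}^{\frac{n}{p}}_{p,q,\infty}}$, and either is fine since both norms appear on the right-hand side.
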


\begin{proposition}
\label{Product:3/p-2} Let $n\geq3,1\leq q\leq p<\infty,$ and $1\leq
r,r_{1},r_{2}\leq\infty$ satisfying $\displaystyle\frac{1}{r}%
=\displaystyle\frac{1}{r_{1}}+\displaystyle\frac{1}{r_{2}}$. If

\begin{description}
\item[(i)] {$p<\frac{n}{2}$}
\end{description}

or

\begin{description}
\item[(ii)] {$\frac{n}{2}\leq p<n$ and $q\geq\frac{3p+n}{2n}$,}
\end{description}

then there exists a constant $C=C(p,q,n)>0$ such that
\[
\Vert uv\Vert_{\mathcal{N}_{p,q,r}^{\frac{n}{p}-2}}\leq C\Vert u\Vert
_{\mathcal{N}_{p,q,r_{1}}^{\frac{n}{p}-1}}\Vert v\Vert_{\mathcal{N}%
_{p,q,r_{2}}^{\frac{n}{p}-1}},
\]
for all $u\in\mathcal{N}_{p,q,r_{1}}^{\frac{n}{p}-1}$ and $v\in\mathcal{N}%
_{p,q,r_{2}}^{\frac{n}{p}-1}$.
\end{proposition}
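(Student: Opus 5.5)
We prove Proposition \ref{Product:3/p-2} with Bony's paraproduct decomposition
\[
uv=T_{u}v+T_{v}u+R(u,v),\qquad T_{u}v=\sum_{j}S_{j-1}u\,\Delta_{j}v,\quad R(u,v)=\sum_{|j-k|\leq1}\Delta_{j}u\,\Delta_{k}v.
\]
Each piece is estimated in $\mathcal{N}_{p,q,r}^{\frac{n}{p}-2}$ using three standard facts about the Morrey base $\mathcal{M}_{q}^{p}$:
\begin{itemize}
\item the Bernstein-type inequality $\Vert\Delta_{j}f\Vert_{\mathcal{M}_{q_{2}}^{p_{2}}}\lesssim2^{jn(\frac{1}{p_{1}}-\frac{1}{p_{2}})}\Vert\Delta_{j}f\Vert_{\mathcal{M}_{q_{1}}^{p_{1}}}$ for $p_{1}\leq p_{2}$ with $q_{2}/p_{2}=q_{1}/p_{1}$;
\item H\"{o}lder's inequality $\Vert fg\Vert_{\mathcal{M}_{q}^{p}}\leq\Vert f\Vert_{\mathcal{M}_{q_{1}}^{p_{1}}}\Vert g\Vert_{\mathcal{M}_{q_{2}}^{p_{2}}}$ when $\frac{1}{p}=\frac{1}{p_{1}}+\frac{1}{p_{2}}$ and $\frac{1}{q}=\frac{1}{q_{1}}+\frac{1}{q_{2}}$;
\item the fact that a sum of pieces with Fourier support in balls $|\xi|\lesssim2^{j}$ lies in $\mathcal{N}^{\sigma}$ whenever $\sigma>0$ (and in annuli for any $\sigma$).
\end{itemize}
The summability exponents are handled at the end by H\"{o}lder in $\ell^{r}$ with $\frac{1}{r}=\frac{1}{r_{1}}+\frac{1}{r_{2}}$. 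Since the estimate is symmetric in $u$ and $v$, it suffices to treat $T_{u}v$ and $R(u,v)$.

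\textbf{Paraproduct $T_{u}v$.} The term $S_{j-1}u\,\Delta_{j}v$ is supported in an annulus of size $2^{j}$, so no positivity of the target index is needed. Write $s=\frac{n}{p}-1<0$ when $p>n/2$; even for $p<n$ we have $s>0$ only if $p<n$, and in fact $\frac{n}{p}-1>0$ always holds here since $p<n$. Put $\Delta_{j}v\in\mathcal{M}_{q}^{p}$ with weight $2^{j(\frac{n}{p}-1)}$. Estimate $S_{j-1}u$ in $L^{\infty}$: since $\frac{n}{p}-1>0$, the Bernstein inequality into $L^{\infty}$ gives
\[
\Vert S_{j-1}u\Vert_{L^{\infty}}\lesssim\sum_{k<j}2^{k\frac{n}{p}}\Vert\Delta_{k}u\Vert_{\mathcal{M}_{q}^{p}}=\sum_{k<j}2^{k}c_{k},
\]
where $c_{k}=2^{k(\frac{n}{p}-1)}\Vert\Delta_{k}u\Vert_{\mathcal{M}_{q}^{p}}\in\ell^{r_{1}}$. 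Then
\[
2^{j(\frac{n}{p}-2)}\Vert S_{j-1}u\Delta_{j}v\Vert_{\mathcal{M}_{q}^{p}}\lesssim\Bigl(\sum_{k<j}2^{(k-j)}c_{k}\Bigr)d_{j},
\]
with $d_{j}\in\ell^{r_{2}}$. Young's inequality for the convolution with $2^{-|\cdot|}$, followed by H\"{o}lder in $\ell$, concludes this term. It requires only $p<n$, so the $T$-terms hold in both cases (i) and (ii).

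\textbf{Remainder $R(u,v)$.} This is the main obstacle. The product $\Delta_{j}u\,\tilde{\Delta}_{j}v$ is only supported in balls $|\xi|\lesssim2^{j}$, so we need positive regularity in some Morrey space, and the target index $\frac{n}{p}-2$ may be negative. We place the product in a larger Morrey space and use an embedding.

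In case (i), H\"{o}lder gives $\Delta_{j}u\,\tilde{\Delta}_{j}v\in\mathcal{M}_{q/2}^{p/2}$, with
\[
2^{j(\frac{2n}{p}-2)}\Vert\cdot\Vert_{\mathcal{M}_{q/2}^{p/2}}\lesssim c_{j}d_{j}.
\]
The index $\frac{2n}{p}-2$ is positive, so $R\in\mathcal{N}_{p/2,q/2,r}^{\frac{2n}{p}-2}$. This requires $q\geq2$ so that $q/2\geq1$; if $q<2$, we instead use H\"{o}lder with exponents adjusted, putting one factor in $\mathcal{M}_{q'}^{p'}$ via Bernstein, or first lift the target to $\mathcal{M}_{1}^{\tilde p}$ with $\tilde p\geq1$. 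Finally, the embedding $\mathcal{N}_{p/2,q/2,r}^{\frac{2n}{p}-2}\hookrightarrow\mathcal{N}_{p,q,r}^{\frac{n}{p}-2}$ (Bernstein, preserving $q/p$) finishes case (i).

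In case (ii), $\frac{n}{p}-2\leq0$, and the difficulty is that we must land in a Morrey space $\mathcal{M}_{\tilde q}^{\tilde p}$ with $\tilde q\geq1$, ratio $\tilde q/\tilde p=q/p$, and positive index $\frac{n}{\tilde p}-2+\frac{n}{p}-\frac{n}{\tilde p}$ after the shift. We pick an intermediate pair: one factor is placed via Bernstein in $\mathcal{M}_{q_{1}}^{p_{1}}$ with $p_{1}>p$, and the product lies in $\mathcal{M}_{\tilde q}^{\tilde p}$, where $\frac{1}{\tilde p}=\frac{1}{p}+\frac{1}{p_{1}}$ and $\tilde q=\tilde p\,q/p$. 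Positivity of the resulting index $\frac{n}{\tilde p}-2+(\text{Bernstein loss})$ together with $\tilde q\geq1$ should reduce exactly to $q\geq\frac{3p+n}{2n}$. We compute the admissible $p_{1}$ and verify that this condition is the compatibility constraint. Then the embedding back to $\mathcal{N}_{p,q,r}^{\frac{n}{p}-2}$ completes the proof.
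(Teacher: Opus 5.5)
Your architecture is the paper's. You use Bony's decomposition, bound the paraproducts by putting the low-frequency factor in $L^\infty$ at index $-1$ (the paper phrases this as the embedding $\mathcal{N}^{\frac np-1}_{p,q,r_1}\hookrightarrow\dot B^{-1}_{\infty,r_1}$), and handle the remainder by H\"older into a space where the index is positive, followed by an embedding back. Your $T_uv$ argument is fine. The positivity of $\frac np-1$ plays no role there, though: only the negativity of the index $-1$ is used, so that bound holds for every $p$.

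The gap is that the remainder $R(u,v)$ is not actually proved in either case, and it is the only place where (i) and (ii) enter.

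\textbf{Case (i).} Your route through $\mathcal{M}^{p/2}_{q/2}$ fails for $1\le q<2$, which the statement allows, and the fallbacks are only named. What you are missing is that no change of Morrey space is needed. H\"older with one factor in $L^\infty$, followed by Bernstein, gives
\[
2^{j(\frac np-2)}\Vert\Delta_jR(u,v)\Vert_{\mathcal{M}^p_q}\lesssim\sum_{k\ge j-2}2^{(j-k)(\frac np-2)}\,2^{k(\frac np-1)}\Vert\Delta_ku\Vert_{\mathcal{M}^p_q}\,2^{k(\frac np-1)}\Vert\tilde\Delta_kv\Vert_{\mathcal{M}^p_q}.
\]
The kernel $\{2^{m(\frac np-2)}\}_{m\le2}$ is summable exactly when $p<\frac n2$, and no condition on $q$ arises. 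This is your case-(ii) scheme with $p_1=\infty$, and it is what the paper does.

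\textbf{Case (ii).} You stop at the assertion that the constraints ``should reduce exactly'' to $q\ge\frac{3p+n}{2n}$, but exhibiting $p_1$ is the whole content of this case. Set $\frac1{\tilde p}=\frac1p+\frac1{p_1}$, $q_1=\frac{qp_1}{p}$ and $\tilde q=\frac{q\tilde p}{p}$. The product index is then $\frac n{\tilde p}-2$, so you need $\frac pq\le\tilde p<\frac n2$; the left inequality is $\tilde q\ge1$ and the right one is positivity of the index.

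The paper takes $\tilde p=\frac{2np}{3p+n}$, $p_1=\frac{2np}{3p-n}$ and $\tilde q=\frac{2nq}{3p+n}$. With these choices $\frac n{\tilde p}-2=\frac{n-p}{2p}>0$, and $\tilde q\ge1$ is precisely $q\ge\frac{3p+n}{2n}$. The remainder lemma for two Besov--Morrey factors then closes the estimate, together with the embeddings $\mathcal{N}^{\frac np-1}_{p,q,r_2}\hookrightarrow\mathcal{N}^{\frac n{p_1}-1}_{p_1,q_1,r_2}$ and $\mathcal{N}^{\frac n{\tilde p}-2}_{\tilde p,\tilde q,r}\hookrightarrow\mathcal{N}^{\frac np-2}_{p,q,r}$.

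Your ``exactly'' is also not correct. In case (ii) the two constraints are compatible if and only if $q>\frac{2p}{n}$. This is implied by the hypothesis but is strictly weaker than $q\ge\frac{3p+n}{2n}$, since $p<n$. So your computation, once done, would not reproduce the stated condition; it would only show that the condition suffices.
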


\begin{remark}
\label{hypothesis-compatibility}Note that, in Proposition \ref{Product:3/p-1},
the condition $n\geq2$ is necessary to the compatibility of the inequality
$1\leq q\leq p<n$, which ensures that $\frac{n}{p}-1>0$, a fact that plays a
fundamental role in the proof of this result. Moreover, in Proposition
\ref{Product:3/p-2}, for $n\geq3$ and $\frac{n}{2}\leq p<n$, we have
$\frac{3p+n}{2n}\leq\frac{5p}{6}<p$. Therefore, the hypothesis $q\geq
\frac{3p+n}{2n}$ does not contradict the one $q\leq p$.
\end{remark}

\begin{remark}
\label{product estimates relevance} The estimates presented in the
propositions above are basic tools for attaining our goals, since Proposition
\ref{Product:3/p-2} allows us to handle nonlinearities of Navier-Stokes type,
and Proposition \ref{Product:3/p-1} makes us able to work with the Hall term
$h\nabla\times((\nabla\times B)\times B)$. In our search for product estimates
in the framework of Besov-Morrey spaces that would assist in solving our
problem, the closest we found was \cite[Proposition 1.1]%
{Ferreira-Perez-Villamizar}, where the authors established a result that
encompasses the content of our Proposition \ref{Product:3/p-2} for $\frac
{n}{2}<p<n$, $r_{1}=\infty$, and $r_{2}=r$. Considering that, in the context
of this comparison, we cover a wider range of parameters and, moreover, the
aforementioned result from \cite{Ferreira-Perez-Villamizar} does not include
the content of Proposition \ref{Product:3/p-1}, we emphasize the need for
obtaining the product estimates stated above.
\end{remark}

Throughout our research, we identified that Proposition \ref{Product:3/p-2},
in addition to being a fundamental tool for achieving our goal with regard to
the stationary Hall-MHD system, also enables us to develop a well-posedness
analysis for the stationary Navier-Stokes equations. Concerning problem
(\ref{aux-NS100}) in the whole space $\mathbb{R}^{n}$, the literature includes
the paper by Chen \cite{Chen}, in which an existence and uniqueness result was
established within the setting of Sobolev spaces. In \cite{Kozono-Yamazaki-2},
Kozono and Yamazaki worked in Morrey spaces, where they conducted a stability
analysis of the stationary solution alongside a well-posedness study.
Subsequently, Kaneko \textit{et. al.} \cite{Kaneko-Kozono-Shimizu} showed
existence, uniqueness, and regularity of solutions in critical homogeneous
Besov spaces. Moreover, Ferreira \textit{et al.}
\cite{Ferreira-Perez-Villamizar} proved, in a scaling-invariant setting of
Besov-Lorentz-Morrey spaces (which include BM-spaces), an existence and
uniqueness result for the stationary Boussinesq equations, consequently
providing, for the Navier-Stokes equations, a new class of stationary solutions.

Based on the previous discussion, the contribution to this topic in our paper
will be to establish a well-posedness result for problem (\ref{aux-NS100}) in
the framework of critical BM-spaces. The goal is to extend the class of
solutions obtained in \cite{Kaneko-Kozono-Shimizu} and complement, in the
scope of stationary Navier-Stokes equations and Besov-Morrey spaces, the
result established in \cite{Ferreira-Perez-Villamizar}.

Prior to stating our main results, let us introduce some functional spaces
that will be used in the development of our study. In this sense, we define
the Banach spaces
\begin{equation}
X=\left\{  (u,B)\in\mathcal{N}_{p,q,1}^{\frac{3}{p}-1}\times\mathcal{N}%
_{p,q,r}^{\frac{3}{p}-1}:\nabla\times B\in\mathcal{N}_{p,q,1}^{\frac{3}{p}%
-1}\,\,\text{and }\dv u=\dv B=0\text{ in}\,\,\mathcal{S}^{^{\prime}%
}(\mathbb{R}^{3})\right\}  , \label{X-space}%
\end{equation}
with norm
\[
\Vert(u,B)\Vert_{X}=\Vert u\Vert_{\mathcal{N}_{p,q,1}^{\frac{3}{p}-1}}+\Vert
B\Vert_{\mathcal{N}_{p,q,r}^{\frac{3}{p}-1}}+\Vert\nabla\times B\Vert
_{\mathcal{N}_{p,q,1}^{\frac{3}{p}-1}},
\]
and
\begin{equation}
Y=\left\{  (f_{1},f_{2})\in\mathcal{N}_{p,q,1}^{\frac{3}{p}-3}\times
\mathcal{N}_{p,q,r}^{\frac{3}{p}-3}:\nabla\times f_{2}\in\mathcal{N}%
_{p,q,1}^{\frac{3}{p}-3}\,\,\text{and }\dv f_{2}=0\text{ in}\,\,\mathcal{S}%
^{^{\prime}}(\mathbb{R}^{3})\right\}  , \label{Y-space}%
\end{equation}
with norm
\[
\Vert(f_{1},f_{2})\Vert_{Y}=\Vert f_{1}\Vert_{\mathcal{N}_{p,q,1}^{\frac{3}%
{p}-3}}+\Vert f_{2}\Vert_{\mathcal{N}_{p,q,r}^{\frac{3}{p}-3}}+\Vert
\nabla\times f_{2}\Vert_{\mathcal{N}_{p,q,1}^{\frac{3}{p}-3}}.
\]
We are now ready to present our main results. The precise sense in which we
use the term well-posedness in its statements can be found in Section
\ref{Sec2}, see Definition \ref{well-posed}.

\begin{theorem}
\label{Result1} Let $1\leq q\leq p<\infty,$ and $1\leq r\leq2$. If either
$p<\frac{3}{2}$ or $\frac{3}{2}\leq p<3$ and $q\geq\frac{p+1}{2}$, then the
stationary Hall-MHD system (\ref{MHD}) is well-posed from $Y$ to $X$.
\end{theorem}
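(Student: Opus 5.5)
We will prove Theorem \ref{Result1} by a contraction argument for the extended system (\ref{Ext.HMHD1})--(\ref{Ext.HMHD4}), posed in $X$ with data in $Y$. Write $\mathbb{P}$ for the Leray projector and set $J=\nabla\times B$, so that $B=curl^{-1}J$ by (\ref{B-identity}). Applying $\mathbb{P}$ and $(-\Delta)^{-1}$, we rewrite the system as the fixed point problem
\[
u=\mu^{-1}(-\Delta)^{-1}\mathbb{P}\big(f_{1}-\nabla\cdot(u\otimes u)+J\times B\big),\qquad
B=\nu^{-1}(-\Delta)^{-1}\big(f_{2}+\nabla\times((u-hJ)\times B)\big).
\]
Here we use $\dv u=0$, so that $(u\cdot\nabla)u=\nabla\cdot(u\otimes u)$. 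The equation for $J$ is obtained by applying $\nabla\times$ to the second one. Call the right-hand side $\Phi(u,B)$. It splits as $\Phi(u,B)=L(f_1,f_2)+\mathcal{B}((u,B),(u,B))$, with $L$ linear and $\mathcal{B}$ bilinear.

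The linear part is straightforward. Since $(-\Delta)^{-1}\mathbb{P}$ and $(-\Delta)^{-1}\nabla\times$ are homogeneous Fourier multipliers of degrees $-2$ and $-1$, they act boundedly between $\mathcal{N}^{s}_{p,q,r}$ and $\mathcal{N}^{s+2}_{p,q,r}$, respectively $\mathcal{N}^{s+1}_{p,q,r}$. This follows from Bernstein-type estimates on Morrey spaces, which hold blockwise by the Littlewood--Paley localization. Consequently $\|L(f_1,f_2)\|_X\le C\|(f_1,f_2)\|_Y$: each component of $X$ is controlled by the matching component of $Y$ shifted by two derivatives.

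The core of the argument is the bilinear estimate $\|\mathcal{B}((u,B),(\tilde u,\tilde B))\|_X\le K\|(u,B)\|_X\|(\tilde u,\tilde B)\|_X$. By the multiplier bounds, it suffices to estimate the products $u\otimes\tilde u$, $J\times\tilde B$, $u\times\tilde B$ and $J\times\tilde B$ in $\mathcal{N}^{3/p-2}$ or $\mathcal{N}^{3/p-1}$ with the appropriate third index.

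The terms below involve no gain of derivatives.
\begin{itemize}
\item \emph{$u\otimes\tilde u$ and $J\times\tilde B$ in the $u$-equation.} Proposition \ref{Product:3/p-2} with $n=3$ places them in $\mathcal{N}^{3/p-2}_{p,q,1}$, taking $r_1=r_2$ with $\frac{1}{r_1}+\frac{1}{r_2}=1$, e.g.\ $r_1=r_2=2$. We use $\mathcal{N}_{p,q,1}\hookrightarrow\mathcal{N}_{p,q,2}$ and the fact that $\tilde B\in\mathcal{N}^{3/p-1}_{p,q,r}$ with $r\le 2$, so $\tilde B\in\mathcal{N}^{3/p-1}_{p,q,2}$. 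This is exactly where the restriction $r\le2$ enters. The hypotheses of Proposition \ref{Product:3/p-2} reduce precisely to $p<3/2$, or to $3/2\le p<3$ with $q\ge (3p+3)/6=(p+1)/2$. This explains the conditions in Theorem \ref{Result1}.
\item \emph{The $B$-component in $\mathcal{N}^{3/p-1}_{p,q,r}$.} We need $(u-hJ)\times B$ in $\mathcal{N}^{3/p-2}_{p,q,r}$. Proposition \ref{Product:3/p-2} applies with $r_1=1$ on $u,J$ and $r_2=r$ on $B$.
\end{itemize}

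The main obstacle is the $J$-component, i.e.\ the Hall term. We need $(u-hJ)\times B\in\mathcal{N}^{3/p-1}_{p,q,1}$, so that after $\nabla\times\nabla\times(-\Delta)^{-1}$ we land in $\mathcal{N}^{3/p-1}_{p,q,1}$. Here we use Proposition \ref{Product:3/p-1} with third index $1$, applied to $u,J\in\mathcal{N}^{3/p-1}_{p,q,1}$. It requires $B\in\mathcal{N}^{3/p}_{p,q,1}$, or at least $B\in\mathcal{N}^{3/p}_{p,q,\infty}\cap L^\infty$. We get $B\in\mathcal{N}^{3/p}_{p,q,1}$ from $\nabla B\in\mathcal{N}^{3/p-1}_{p,q,1}$: by $\dv B=0$ we have $\nabla B\simeq$ Riesz transforms of $J$, so $\|\dot\Delta_j B\|\simeq 2^{-j}\|\dot\Delta_j J\|$ blockwise. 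Low frequencies are not an issue, since $B\in\mathcal{N}^{3/p-1}_{p,q,r}$ with $3/p-1>0$ excludes polynomials. Hence $\|B\|_{\mathcal{N}^{3/p}_{p,q,1}}\le C\|J\|_{\mathcal{N}^{3/p-1}_{p,q,1}}$, which closes the estimate.

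The care needed is in checking that the realization of $B$ via $curl^{-1}J$ is consistent in $\mathcal{S}'$ modulo polynomials. We also need to verify that divergence-free conditions are preserved by the map, since curls are divergence-free and $\mathbb{P}$ projects.

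With $\|L\|\le C$ and $\|\mathcal{B}\|\le K$, the standard lemma for $x=y+\mathcal{B}(x,x)$ gives, for $\|(f_1,f_2)\|_Y\le\varepsilon$ with $4CK\varepsilon<1$, a unique solution in the ball of radius $2C\varepsilon$. The same lemma gives Lipschitz dependence on data, which yields well-posedness in the sense of Definition \ref{well-posed}. Finally, since $J=\nabla\times B$ and the pressure $\phi$ is recovered by $\phi=(-\Delta)^{-1}\dv(\text{forcing}-\text{nonlinearities})$, the solution solves (\ref{MHD}).
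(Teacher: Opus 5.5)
Your argument has a genuine gap in the Lorentz-force term of the $u$-equation. You put $(u\cdot\nabla)u$ in divergence form but keep $J\times B$ as a zero-order product. You then use Proposition \ref{Product:3/p-2} to place $J\times\tilde B$ in $\mathcal{N}^{3/p-2}_{p,q,1}$. There is no derivative in front of it, and $(-\Delta)^{-1}\mathbb{P}$ has degree $-2$. So this only puts the corresponding part of the new velocity in $\mathcal{N}^{3/p}_{p,q,1}$, while $X$ requires $\mathcal{N}^{3/p-1}_{p,q,1}$. Homogeneous BM-spaces with different regularity indices are not nested, so the bilinear estimate does not close.

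What you would need is $J\times\tilde B\in\mathcal{N}^{3/p-3}_{p,q,1}$, and nothing in Section \ref{Sec3} gives that. A product of two $\mathcal{N}^{3/p-1}$ factors sits at level $3/p-2$. The remainder Lemmas \ref{R-Binfity-BM-Holder} and \ref{R-BM-BM-Holder} require positive total regularity, whereas $3/p-3\le 0$.

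The missing ingredient is the identity (\ref{rotwxw}): for divergence-free $B$, $(\nabla\times B)\times B=\dv(B\otimes B)-\nabla(|B|^{2}/2)$, so $\mathbb{P}(J\times B)=\mathbb{P}\dv(B\otimes B)$. The bilinear form must then be built from a divergence-form expression such as $\frac12\mathbb{P}\dv(B\otimes\tilde B+\tilde B\otimes B)$ (the paper's $\beta_1$), not from $(\nabla\times B)\times\tilde B$. The required bound becomes $\Vert B\otimes\tilde B\Vert_{\mathcal{N}^{3/p-2}_{p,q,1}}\le C\Vert B\Vert_{\mathcal{N}^{3/p-1}_{p,q,2}}\Vert\tilde B\Vert_{\mathcal{N}^{3/p-1}_{p,q,2}}$, i.e.\ Proposition \ref{Product:3/p-2} with $r_1=r_2=2$. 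This is where $r\le 2$ genuinely enters: both factors are $B$, which is only in $\ell^{r}$. In your formulation $r\le 2$ would never be needed, since $J\in\ell^{1}$ can always be paired with $\tilde B\in\ell^{\infty}$; that is a symptom of the mismatch.

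There is also a smaller slip. For the $B$-component you take $r_1=1$ on $u,J$ and $r_2=r$ on $B$ in Proposition \ref{Product:3/p-2}. This violates $\frac1r=\frac1{r_1}+\frac1{r_2}$, since the output index would be $\frac{r}{r+1}<1$. Instead use $\mathcal{N}^{3/p-1}_{p,q,1}\hookrightarrow\mathcal{N}^{3/p-1}_{p,q,\infty}$ and $(r_1,r_2)=(\infty,r)$, as in (\ref{term2}).

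With these two repairs your scheme works, and it differs from the paper in one structural point. You contract directly in $X$ with $J:=\nabla\times B$ built in. This is legitimate: $X$ is complete because $\nabla\times$ is continuous on $\mathcal{S}'$, and $B=curl^{-1}J$ blockwise for $B\in\mathcal{S}'_h$ with $\dv B=0$. The paper instead solves the extended system for an independent triple $(u,B,J)$ in $\tilde X$. It then proves $J=\nabla\times B$ a posteriori by a second smallness argument using Proposition \ref{Product:3/p-1}. Your route avoids that identification step, and your Hall-term estimate is the same as the paper's (\ref{term3}).
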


\begin{theorem}
\label{Result2} Let $n\geq3$, $1\leq q\leq p<\infty,$ and $1\leq r\leq\infty$.
If either $p<\frac{n}{2}$ or $\frac{n}{2}\leq p<n$ and $q\geq\frac{3p+n}{2n}$,
then the stationary Navier-Stokes equation (\ref{aux-NS100}) in $\mathbb{R}%
^{n}$ is well-posed from $\mathcal{N}_{p,q,r}^{\frac{n}{p}-3}$ to
$\mathcal{N}_{p,q,r}^{\frac{n}{p}-1}$.
\end{theorem}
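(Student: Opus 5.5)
The proof of Theorem \ref{Result2} will be a fixed-point argument on the integral formulation of (\ref{aux-NS100}). Apply the Leray projector $\mathbb{P}$, which is a Fourier multiplier homogeneous of degree zero and hence bounded on every $\mathcal{N}_{p,q,r}^{s}$. Using $\dv u=0$, write $(u\cdot\nabla)u=\nabla\cdot(u\otimes u)$. Then (\ref{aux-NS100}) becomes
\[
u=\mu^{-1}(-\Delta)^{-1}\mathbb{P}f-\mu^{-1}(-\Delta)^{-1}\mathbb{P}\nabla\cdot(u\otimes u)=:u_{0}+\mathcal{B}(u,u).
\]
The first step is the linear estimate. $(-\Delta)^{-1}$ is a homogeneous multiplier of degree $-2$ that acts diagonally on Littlewood--Paley blocks: on the dyadic annulus $|\xi|\sim2^{j}$ it multiplies the $\mathcal{M}_{q}^{p}$-norm by roughly $2^{-2j}$. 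This follows from the Bernstein/multiplier lemma in Morrey spaces, which comes from convolution with an $L^{1}$ kernel and Young's inequality in $\mathcal{M}_{q}^{p}$. Hence
\[
\Vert(-\Delta)^{-1}\mathbb{P}f\Vert_{\mathcal{N}_{p,q,r}^{s+2}}\leq C\Vert f\Vert_{\mathcal{N}_{p,q,r}^{s}},
\]
and likewise $\nabla$ costs exactly one derivative. In particular $\Vert u_{0}\Vert_{\mathcal{N}_{p,q,r}^{\frac{n}{p}-1}}\leq C\mu^{-1}\Vert f\Vert_{\mathcal{N}_{p,q,r}^{\frac{n}{p}-3}}$. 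The one subtlety is that $\frac{n}{p}-3$ may be negative. We therefore work modulo polynomials, or in the realization of the homogeneous space where the sum $\sum_{j}\Delta_{j}$ converges. Since $\frac{n}{p}-1>0$ is not above the critical index, the solution space is a genuine distribution space.

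The second step is the bilinear estimate, and this is exactly where Proposition \ref{Product:3/p-2} enters. By the linear estimate,
\[
\Vert\mathcal{B}(u,v)\Vert_{\mathcal{N}_{p,q,r}^{\frac{n}{p}-1}}\leq C\mu^{-1}\Vert u\otimes v\Vert_{\mathcal{N}_{p,q,r}^{\frac{n}{p}-2}}.
\]
The product term cannot be closed with $r_{1}=r_{2}=r$, because then $1/r\neq 2/r$ in general. I would first try $r_{1}=r$ and $r_{2}=\infty$, which satisfies $\frac{1}{r}=\frac{1}{r_{1}}+\frac{1}{r_{2}}$, together with the embedding $\mathcal{N}_{p,q,r}^{\frac{n}{p}-1}\hookrightarrow\mathcal{N}_{p,q,\infty}^{\frac{n}{p}-1}$. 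Under hypothesis (i) or (ii) this gives
\[
\Vert u\otimes v\Vert_{\mathcal{N}_{p,q,r}^{\frac{n}{p}-2}}\leq C\Vert u\Vert_{\mathcal{N}_{p,q,r}^{\frac{n}{p}-1}}\Vert v\Vert_{\mathcal{N}_{p,q,\infty}^{\frac{n}{p}-1}}\leq C\Vert u\Vert_{\mathcal{N}_{p,q,r}^{\frac{n}{p}-1}}\Vert v\Vert_{\mathcal{N}_{p,q,r}^{\frac{n}{p}-1}}.
\]
Thus $\mathcal{B}$ is a continuous bilinear map on $E=\{u\in\mathcal{N}_{p,q,r}^{\frac{n}{p}-1}:\dv u=0\}$ with some norm $K$.

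The third step is the contraction argument. The standard fixed-point lemma for $u=u_{0}+\mathcal{B}(u,u)$ gives the following. If $\Vert u_{0}\Vert_{E}<\frac{1}{4K}$, there is a unique solution in the closed ball of radius $2\Vert u_{0}\Vert_{E}$. This solution depends Lipschitz-continuously on $u_{0}$, and hence on $f$ through the first step. Together these give well-posedness in the sense of Definition \ref{well-posed}, for forces with $\Vert f\Vert_{\mathcal{N}_{p,q,r}^{\frac{n}{p}-3}}\leq\varepsilon$. The pressure is then recovered as $\nabla\phi=(I-\mathbb{P})(f-\nabla\cdot(u\otimes u))$, expressed through Riesz transforms, so that (\ref{aux-NS100}) holds in $\mathcal{S}'$.

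I expect the main obstacle to be making rigorous that the fixed-point equation is equivalent to the distributional system. This means checking that $u\otimes u$ is a well-defined tempered distribution, using $\frac{n}{p}-1>0$ and Proposition \ref{Product:3/p-2}, and that $(-\Delta)^{-1}$ is well defined on $\mathcal{N}^{\frac{n}{p}-3}$ without polynomial ambiguity. The analytic heart, the product estimate, is already supplied by Proposition \ref{Product:3/p-2}.
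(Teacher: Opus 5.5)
Your proposal is correct and follows essentially the same route as the paper: apply the Leray projection, bound the linear parts $(-\mu\Delta)^{-1}\mathcal{P}$ and $(-\mu\Delta)^{-1}\mathcal{P}\dv$ with the Mikhlin--H\"ormander lemma, and estimate the product by Proposition \ref{Product:3/p-2} with $r_{1}=r$, $r_{2}=\infty$ plus the embedding $\mathcal{N}_{p,q,r}^{\frac{n}{p}-1}\hookrightarrow\mathcal{N}_{p,q,\infty}^{\frac{n}{p}-1}$; the conclusion then follows from the contraction-mapping corollary (Lemma \ref{auxiliar1}). Your remarks on recovering the pressure and on realizing the homogeneous spaces are harmless additions, and the paper handles the latter implicitly by defining $\mathcal{N}_{p,q,r}^{s}$ inside $\mathcal{S}_{h}^{\prime}$.
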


\begin{remark}
\label{paper-relevance-1} In \cite{Tan-Tsurumi-Zhang}, the authors established
well-posedness for the stationary Hall-MHD system, under the conditions $1\leq
p<3$ and $1\leq r\leq2$, from the data-space
\[
\left\{  (f_{1},f_{2})\in\dot{B}_{p,1}^{\frac{3}{p}-3}\times\dot{B}%
_{p,r}^{\frac{3}{p}-3}:\nabla\times f_{2}\in\dot{B}_{p,1}^{\frac{3}{p}%
-3}\,\,\text{and }\dv f_{2}=0\text{ in}\,\,\mathcal{S}^{^{\prime}}%
(\mathbb{R}^{3})\right\}
\]
to the solution-space
\[
\left\{  (u,B)\in\dot{B}_{p,1}^{\frac{3}{p}-1}\times\dot{B}_{p,r}^{\frac{3}%
{p}-1}:\nabla\times B\in\dot{B}_{p,1}^{\frac{3}{p}-1}\,\,\text{and }\dv u=\dv
B=0\text{ in}\,\,\mathcal{S}^{^{\prime}}(\mathbb{R}^{3})\right\}  .
\]
Since $\mathcal{N}_{p,p,r}^{s}=\dot{B}_{p,r}^{s}$ and, as reported by \cite[p.
964 and Example 0.5 on p. 966]{Kozono-Yamazaki} and \cite[p. 1307, lines 1 to
6]{Mazzucato}, for $q<p,r\in\lbrack1,\infty]$, and $s\in\mathbb{R}$ we have
the strict inclusion $\dot{B}_{p,r}^{s}\hookrightarrow\mathcal{N}_{p,q,r}^{s}%
$, Theorem \ref{Result1} not only covers this result but also extends it,
providing a larger class of stationary solutions for the Hall-MHD system.
\end{remark}

\begin{remark}
\label{paper-relevance-2} In \cite{Kaneko-Kozono-Shimizu} the well-posedness
of the stationary Navier-Stokes equations in $\mathbb{R}^{n}$ was established
for $n\geq3$ and $1\leq p<n$, from $\dot{B}_{p,r}^{\frac{n}{p}-3}$ to $\dot
{B}_{p,r}^{\frac{n}{p}-1}$, where $1\leq r\leq\infty$. Furthermore, as a
consequence of the work carried out in \cite{Ferreira-Perez-Villamizar}, we
have the existence and uniqueness of stationary solutions for the
Navier-Stokes equations in the framework of critical BM-spaces, given that
$n\geq3$ and $\frac{n}{2}<p<n$. Thus, taking into account the relationships
between the Besov spaces and Besov-Morrey spaces highlighted in Remark
\ref{paper-relevance-1}, we have that Theorem \ref{Result2} covers and extends
the aforementioned result of \cite{Kaneko-Kozono-Shimizu} by contemplating a
larger class of external forces. Moreover, from the perspective of the
Navier-Stokes equations and Besov-Morrey spaces, it complements the result
obtained in \cite{Ferreira-Perez-Villamizar}.
\end{remark}

As for the structure of our paper, Section \ref{Sec2} introduces key notation
and revisits definitions and features of function spaces such as Morrey and
Besov-Morrey spaces, which will be regularly used in our work. Section
\ref{Sec3} is devoted to proving the product estimates given in Propositions
\ref{Product:3/p-1} and \ref{Product:3/p-2}. In Section \ref{Sec4}, we
concentrate on the analysis of the stationary Hall-MHD system and establish
the proof of Theorem \ref{Result1}. Finally, in Section \ref{Sec5}, we turn
our attention to the stationary Navier-Stokes equations and prove Theorem
\ref{Result2}.

\section{Preliminaries}

\label{Sec2}In this section, we will gather and outline essential notations,
definitions, tools, and properties pertinent to certain functional spaces and
operators that are relevant to our study. Here the dimension $n\geq1$ is
assumed. Throughout the paper, the letter $C>0$ will be used to denote a
constant that may vary with each instance.

Our initial focus will be on an overview of Morrey spaces, addressing their
definition and key properties. For further information, please refer to
\cite{Kozono-Yamazaki, Rosenthal, Sawano-Fazio-Hakim, Zhou}.

\begin{definition}
Consider $1\leq q\leq p<\infty$. The Morrey space $\mathcal{M}_{q}%
^{p}=\mathcal{M}_{q}^{p}(\mathbb{R}^{n})$ is defined as the set of all
functions $u\in L_{loc}^{q}(\mathbb{R}^{n})$ such that%

\[
\left\Vert u\right\Vert _{\mathcal{M}_{q}^{p}}:=\sup_{x_{0}\in\mathbb{R}%
^{n},R>0}R^{n/p-n/q}\left\{  \int_{B_{R}(x_{0})}|u|^{q}dx\right\}  ^{\frac
{1}{q}}<\infty,
\]
where $B_{R}(x_{0})\subset\mathbb{R}^{n}$ denotes the closed ball centered in
$x_{0}$ with radius $R>0$.
\end{definition}

\begin{remark}
It is straightforward to check that $\mathcal{M}_{q}^{p}$, endowed with the
norm $\Vert\ .\ \Vert_{\mathcal{M}_{q}^{p}}$, is a Banach space. Moreover, for
$1\leq q<q^{^{\prime}}\leq p<\infty$, the continuous inclusion relation
$\mathcal{M}_{q^{^{\prime}}}^{p}\subset\mathcal{M}_{q}^{p}$ holds and
$\mathcal{M}_{p}^{p}=L^{p}(\mathbb{R}^{n})$. When $p=\infty, \mathcal{M}%
_{q}^{p}$ can be identified with $L^{\infty}(\mathbb{R}^{n})$, meaning
$\mathcal{M}_{q}^{\infty}=L^{\infty}$.
\end{remark}

The following results, which are available in \cite{Kozono-Yamazaki}, presents
H\"{o}lder-type and Young inequalities in Morrey spaces and will be highly
significant for the advancement of our work.

\begin{lemma}
\label{Holder-Morrey} Let $1\leq q\leq p<\infty$ and $1\leq q_{j}\leq
p_{j}<\infty$, $j=1,2$. Then:

\begin{itemize}
\item[(i)] {For all $u_{0}\in L^{\infty}$ and $u\in\mathcal{M}_{q}^{p}$, we
have that
\[
\Vert u_{0}u\Vert_{\mathcal{M}_{q}^{p}}\leq\Vert u_{0}\Vert_{L^{\infty}} \Vert
u\Vert_{\mathcal{M}_{q}^{p}}.
\]
}

\item[(ii)] {If $u_{j}\in\mathcal{M}_{q_{j}}^{p_{j}}, j=1,2$ and
\[
\displaystyle\frac{1}{p}=\displaystyle\frac{1}{p_{1}}+ \displaystyle\frac
{1}{p_{2}} \quad\text{and} \quad\displaystyle\frac{1}{q}\geq\displaystyle\frac
{1}{q_{1}} + \displaystyle\frac{1}{q_{2}},
\]
we have that
\[
\Vert u_{1}u_{2}\Vert_{\mathcal{M}_{q}^{p}}\leq\Vert u_{1}\Vert_{\mathcal{M}%
_{q_{1}}^{p_{1}}}\Vert u_{2}\Vert_{\mathcal{M}_{q_{2}}^{p_{2}}}.
\]
}
\end{itemize}
\end{lemma}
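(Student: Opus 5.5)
The plan is to argue directly on balls with the definition of the Morrey norm, applying the classical Hölder inequality inside each ball and then matching the powers of the radius.

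For (i), fix $x_{0}\in\mathbb{R}^{n}$ and $R>0$. Pointwise a.e. we have $|u_{0}u|\leq\Vert u_{0}\Vert_{L^{\infty}}|u|$, so
\[
R^{n/p-n/q}\left\{\int_{B_{R}(x_{0})}|u_{0}u|^{q}dx\right\}^{\frac{1}{q}}\leq\Vert u_{0}\Vert_{L^{\infty}}R^{n/p-n/q}\left\{\int_{B_{R}(x_{0})}|u|^{q}dx\right\}^{\frac{1}{q}}\leq\Vert u_{0}\Vert_{L^{\infty}}\Vert u\Vert_{\mathcal{M}_{q}^{p}}.
\]
Taking the supremum over $x_{0}$ and $R$ gives (i).

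For (ii), first take the case $\frac{1}{q}=\frac{1}{q_{1}}+\frac{1}{q_{2}}$. Hölder's inequality on $B_{R}(x_{0})$ with exponents $q_{1}/q$ and $q_{2}/q$ gives
\[
\Vert u_{1}u_{2}\Vert_{L^{q}(B_{R})}\leq\Vert u_{1}\Vert_{L^{q_{1}}(B_{R})}\Vert u_{2}\Vert_{L^{q_{2}}(B_{R})}.
\]
Now multiply by $R^{n/p-n/q}$. Because $\frac{1}{p}=\frac{1}{p_{1}}+\frac{1}{p_{2}}$ and $\frac{1}{q}=\frac{1}{q_{1}}+\frac{1}{q_{2}}$, the exponents split as
\[
R^{n/p-n/q}=R^{n/p_{1}-n/q_{1}}\,R^{n/p_{2}-n/q_{2}}.
\]
Each factor is then bounded by the corresponding Morrey norm, and taking the supremum finishes this case.

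For the general case $\frac{1}{q}>\frac{1}{q_{1}}+\frac{1}{q_{2}}$, define $\tilde{q}$ by $\frac{1}{\tilde{q}}=\frac{1}{q_{1}}+\frac{1}{q_{2}}$, so that $q<\tilde{q}$.
\begin{itemize}
\item[(a)] We need $\tilde{q}\leq p$. This holds because $q_{j}\leq p_{j}$ gives $\frac{1}{\tilde{q}}\geq\frac{1}{p_{1}}+\frac{1}{p_{2}}=\frac{1}{p}$. Also $\tilde{q}\geq1$, since $\tilde{q}>q\geq1$.
\item[(b)] By the equality case, $\Vert u_{1}u_{2}\Vert_{\mathcal{M}_{\tilde{q}}^{p}}\leq\Vert u_{1}\Vert_{\mathcal{M}_{q_{1}}^{p_{1}}}\Vert u_{2}\Vert_{\mathcal{M}_{q_{2}}^{p_{2}}}$.
\item[(c)] The embedding $\mathcal{M}_{\tilde{q}}^{p}\subset\mathcal{M}_{q}^{p}$ from the preceding Remark then gives the claim. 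That embedding holds with constant $1$ under the paper's normalization, since on a ball Hölder produces the factor $|B_{R}|^{1/q-1/\tilde{q}}\sim R^{n/q-n/\tilde{q}}$, which cancels the change in the radius weight.
\end{itemize}

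The main point needing care is this last constant. With closed balls of Lebesgue measure $\omega_{n}R^{n}$, Hölder gives a factor $\omega_{n}^{1/q-1/\tilde{q}}$, so the embedding constant is $1$ only up to this dimensional factor. If the sharp constant $1$ is required, I would normalize via averages or absorb this factor. Otherwise the inequality holds with a constant depending only on $n,q,\tilde{q}$, which is sufficient for all later uses in the paper.
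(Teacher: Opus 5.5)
Your proof of (i), and of (ii) when $\frac{1}{q}=\frac{1}{q_{1}}+\frac{1}{q_{2}}$, is complete and is the standard argument. The paper gives no proof of its own; it only cites Kozono--Yamazaki.

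The gap is step (c), in the case $\frac{1}{q}>\frac{1}{q_{1}}+\frac{1}{q_{2}}$. Its first sentence claims that $\mathcal{M}_{\tilde{q}}^{p}\subset\mathcal{M}_{q}^{p}$ has norm $1$ under the paper's normalization. That is false, and your closing paragraph in effect retracts it: H\"{o}lder on $B_{R}(x_{0})$ produces exactly the factor $\omega_{n}^{1/q-1/\tilde{q}}$, where $\omega_{n}=|B_{1}(0)|$.

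Neither proposed remedy works. Switching to averaged balls changes the norm that the statement fixes. A factor larger than $1$ cannot be absorbed into the constant $1$.

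In fact no argument can give constant $1$ here, because the inequality as stated fails in this case whenever $\omega_{n}>1$, in particular for $n=3$. Take $p=2$, $q=1$, $p_{1}=p_{2}=q_{1}=q_{2}=4$. All hypotheses hold, since $\frac{1}{2}=\frac{1}{4}+\frac{1}{4}$ and $1>\frac{1}{4}+\frac{1}{4}$. Let $u_{1}=u_{2}=\chi$ be the characteristic function of $B_{1}(0)\subset\mathbb{R}^{3}$.
\begin{itemize}
\item Then $\Vert u_{j}\Vert_{\mathcal{M}_{4}^{4}}=\Vert\chi\Vert_{L^{4}}=\omega_{3}^{1/4}$, so the right-hand side equals $\omega_{3}^{1/2}$.
\item Choosing $x_{0}=0$, $R=1$ in the definition gives $\Vert u_{1}u_{2}\Vert_{\mathcal{M}_{1}^{2}}\geq\omega_{3}=\frac{4\pi}{3}>\omega_{3}^{1/2}$.
\end{itemize}
For general $n$ and general indices, the same test function gives $\Vert\chi\Vert_{\mathcal{M}_{q_{j}}^{p_{j}}}=\omega_{n}^{1/q_{j}}$ and $\Vert\chi\Vert_{\mathcal{M}_{q}^{p}}\geq\omega_{n}^{1/q}$. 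So the constant $\omega_{n}^{1/q-1/q_{1}-1/q_{2}}$ that your argument delivers is sharp.

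The correct conclusion of your proof is therefore that (ii) holds with constant $\omega_{n}^{1/q-1/q_{1}-1/q_{2}}$, which equals $1$ exactly in the equality case. You should state it that way instead of claiming constant $1$.

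This is harmless for the rest of the paper. Every later estimate carries a generic constant $C$. The only place where genuinely different Morrey indices are multiplied is Lemma \ref{R-BM-BM-Holder} as used in the proof of Proposition \ref{Product:3/p-2}(ii), and there $\frac{1}{q^{\ast}}=\frac{1}{q}+\frac{1}{q^{\prime}}$ holds with equality, where your argument gives constant $1$.
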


\begin{lemma}
\label{Young-Morrey} Let $1\leq q\leq p<\infty$, $u_{0}\in L^{1}$ and
$u\in\mathcal{M}_{q}^{p}$. Then, $u_{0}\ast u\in\mathcal{M}_{q}^{p}$ and
\[
\Vert u_{0}\ast u\Vert_{\mathcal{M}_{q}^{p}}\leq\Vert u_{0}\Vert_{L^{1}}\Vert
u\Vert_{\mathcal{M}_{q}^{p}}.
\]

\end{lemma}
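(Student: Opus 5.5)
The plan is to prove the estimate ball by ball using Minkowski's integral inequality, exploiting the fact that the Morrey norm is translation invariant. Fix $x_{0}\in\mathbb{R}^{n}$ and $R>0$. I would first work with the nonnegative functions $|u_{0}|$ and $|u|$, so that all integrals are defined in $[0,\infty]$. Writing
\[
(|u_{0}|\ast|u|)(x)=\int_{\mathbb{R}^{n}}|u_{0}(y)|\,|u(x-y)|\,dy,
\]
Minkowski's integral inequality in $L^{q}(B_{R}(x_{0}))$ (valid since $q\geq1$) gives
\[
\Big\{\int_{B_{R}(x_{0})}\big(|u_{0}|\ast|u|\big)^{q}dx\Big\}^{\frac{1}{q}}\leq\int_{\mathbb{R}^{n}}|u_{0}(y)|\Big\{\int_{B_{R}(x_{0})}|u(x-y)|^{q}dx\Big\}^{\frac{1}{q}}dy .
\]

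The key observation is the change of variables $z=x-y$. It turns the inner integral into $\int_{B_{R}(x_{0}-y)}|u(z)|^{q}dz$, and by the definition of $\Vert\cdot\Vert_{\mathcal{M}_{q}^{p}}$ this is at most $R^{\frac{n}{q}-\frac{n}{p}}\Vert u\Vert_{\mathcal{M}_{q}^{p}}$, uniformly in $y$. Hence
\[
R^{\frac{n}{p}-\frac{n}{q}}\Big\{\int_{B_{R}(x_{0})}\big(|u_{0}|\ast|u|\big)^{q}dx\Big\}^{\frac{1}{q}}\leq\Vert u_{0}\Vert_{L^{1}}\Vert u\Vert_{\mathcal{M}_{q}^{p}} .
\]
Taking the supremum over $x_{0}$ and $R$ then yields the bound for $|u_{0}|\ast|u|$.

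The only point needing care, which is a minor obstacle rather than a real difficulty, is that $u_{0}\ast u$ is well defined. The inequality above shows that $|u_{0}|\ast|u|\in L^{q}(B_{R}(x_{0}))$ for every ball, so it is finite almost everywhere. Consequently the integral defining $u_{0}\ast u$ converges absolutely for a.e. $x$, and $u_{0}\ast u$ is measurable by Fubini--Tonelli. Moreover $|u_{0}\ast u|\leq|u_{0}|\ast|u|$ pointwise, so $u_{0}\ast u\in L_{loc}^{q}$ and
\[
\Vert u_{0}\ast u\Vert_{\mathcal{M}_{q}^{p}}\leq\big\Vert\,|u_{0}|\ast|u|\,\big\Vert_{\mathcal{M}_{q}^{p}}\leq\Vert u_{0}\Vert_{L^{1}}\Vert u\Vert_{\mathcal{M}_{q}^{p}},
\]
which is the claim.
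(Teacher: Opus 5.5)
Your proof is correct and complete. Minkowski's integral inequality on $B_{R}(x_{0})$, the change of variables $z=x-y$ (which turns the inner integral into one over the translated ball $B_{R}(x_{0}-y)$, where the Morrey bound holds uniformly in $y$), and the pointwise domination $|u_{0}\ast u|\leq|u_{0}|\ast|u|$ together give the estimate. You also correctly handle the a.e.\ definition and measurability of $u_{0}\ast u$. The paper does not prove this lemma; it only quotes it from Kozono--Yamazaki, and your argument is the standard proof of this Young-type inequality, so nothing is missing.
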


Next, we review the Littlewood-Paley decomposition (further details can be
found in \cite{Kozono-Yamazaki, Lemarie, Mazzucato}). Precisely, let
$\psi,\varphi\in C_{c}^{\infty}(\mathbb{R}^{n})$ be functions meeting
$supp\varphi\subset\left\{  \xi\in\mathbb{R}^{n}:3/4\leq|\xi|\leq8/3\right\}
$, $supp\psi\subset B_{4/3}(0)$, and such that%

\[
\displaystyle\sum_{j=-\infty}^{\infty}\varphi_{j}(\xi)=1,\forall\xi
\in\mathbb{R}^{n}\backslash\{0\}\quad\mbox{and}\quad\psi(\xi
)+\displaystyle\sum_{j=0}^{\infty}\varphi_{j}(\xi)=1,\forall\xi\in
\mathbb{R}^{n},
\]
where $\varphi_{j}(\xi):=\varphi(2^{-j}\xi),\forall j\in\mathbb{Z}$ and
$B_{4/3}(0)$ denotes the ball centered at the origin with radius $4/3$.
Furthermore, for all $j\in\mathbb{Z}$, consider the homogeneous dyadic blocks
$\Delta_{j}$ defined as
\[
\Delta_{j}u := \mathcal{F}^{-1}[\varphi_{j}\mathcal{F}[u]] =\mathcal{F}%
^{-1}[\varphi_{j}]\ast u, \forall u\in\mathcal{S}^{^{\prime}}
\]
and the low-frequency cut-off operators $S_{j}$ and $\tilde{S}_{j}$, given by
\[
S_{j}u := \sum_{k\leq j}\Delta_{k}u, \forall u\in\mathcal{S}^{^{\prime}}
\]
and
\[
\tilde{S}_{j}u:=\mathcal{F}^{-1}[\psi(2^{-j}\xi)\mathcal{F}[u]]=\mathcal{F}%
^{-1}[\psi(2^{-j}\xi)]\ast u,\forall u\in\mathcal{S}^{^{\prime}},
\]
where $\mathcal{S}^{^{\prime}}=\mathcal{S}^{^{\prime}}(\mathbb{R}^{n})$
represent the space of tempered distributions. With these conditions in place,
denoting by $\mathcal{S}_{h}^{^{\prime}}=\mathcal{S}_{h}^{^{\prime}%
}(\mathbb{R}^{n}) := \left\{  u\in\mathcal{S}^{^{\prime}}:\displaystyle\lim
_{j\rightarrow-\infty}\Vert\tilde{S}_{j}u\Vert_{L^{\infty}}=0\right\}  ,$ we
have the Littlewood-Paley decomposition
\[
u=\displaystyle\sum_{j=-\infty}^{\infty}\Delta_{j}u , \forall u\in
\mathcal{S}_{h}^{^{\prime}}.
\]

\begin{remark}
\label{operators-properties} The operators defined above satisfy the
identities
\[
\Delta_{j}(\Delta_{k}u)\equiv0 \,\, \text{if} \,\, \vert j-k\vert\geq
2\quad\text{and}\quad\Delta_{j}(S_{k-2}u\Delta_{k}v)\equiv0 \,\, \text{if}
\,\, \vert j-k\vert\geq5,
\]
which can easily be checked.
\end{remark}

Let us recall two fundamental properties of operators $\Delta_{j}$ and $S_{j}%
$. The first of these is an immediate consequence of Lemma \ref{Young-Morrey},
and the second can be found in \cite{Bahouri-Chemin-Danchin}.

\begin{lemma}
\label{Delta-j.Sj.Morrey} Let $1\leq q\leq p<\infty$ and consider, for all
$j\in\mathbb{Z}$, the operators $\Delta_{j}$ and $S_{j}$ as defined above.
Then, there exist positive constants $C_{1}=C_{1}(n)$ and $C_{2}=C_{2}(n)$
such that
\[
\Vert\Delta_{j}u\Vert_{\mathcal{M}_{q}^{p}}\leq C_{1}\Vert u\Vert
_{\mathcal{M}_{q}^{p}},\forall j\in\mathbb{Z}, \forall u\in\mathcal{M}%
_{q}^{p}
\]
and
\[
\Vert S_{j}v\Vert_{\mathcal{M}_{q}^{p}}\leq C_{2}\Vert v\Vert_{\mathcal{M}%
_{q}^{p}},\forall j\in\mathbb{Z}, \forall v\in\mathcal{M}_{q}^{p}.
\]

\end{lemma}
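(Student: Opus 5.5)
The plan is to write both operators as convolutions with $L^{1}$ kernels whose $L^{1}$ norms do not depend on $j$, and then apply Lemma \ref{Young-Morrey}.

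\emph{The blocks $\Delta_{j}$.} By definition, $\Delta_{j}u=h_{j}\ast u$ with $h_{j}:=\mathcal{F}^{-1}[\varphi_{j}]$. Since $\varphi_{j}(\xi)=\varphi(2^{-j}\xi)$, we have $h_{j}(x)=2^{jn}h(2^{j}x)$, where $h:=\mathcal{F}^{-1}[\varphi]$. Because $\varphi\in C_{c}^{\infty}$, $h$ is a Schwartz function, so $h\in L^{1}$. The change of variables $y=2^{j}x$ gives $\Vert h_{j}\Vert_{L^{1}}=\Vert h\Vert_{L^{1}}$ for every $j\in\mathbb{Z}$. Lemma \ref{Young-Morrey} then yields
\[
\Vert\Delta_{j}u\Vert_{\mathcal{M}_{q}^{p}}\leq\Vert h\Vert_{L^{1}}\Vert u\Vert_{\mathcal{M}_{q}^{p}},
\]
so we may take $C_{1}:=\Vert h\Vert_{L^{1}}$, which depends only on $n$ through the fixed choice of $\varphi$.

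\emph{The operators $S_{j}$.} I will identify the symbol of $S_{j}$. Let $\chi(\xi):=\sum_{k\leq0}\varphi(2^{-k}\xi)$ for $\xi\neq0$ and $\chi(0):=1$. From $\sum_{k\in\mathbb{Z}}\varphi_{k}=1$ on $\mathbb{R}^{n}\setminus\{0\}$ we get $\chi=1-\sum_{k\geq1}\varphi_{k}$ away from the origin. The right-hand side is a locally finite sum of smooth functions, and it vanishes for $|\xi|$ large; for $|\xi|<3/2$ every $\varphi_{k}$ with $k\geq1$ vanishes, so $\chi\equiv1$ there. Hence $\chi\in C_{c}^{\infty}(\mathbb{R}^{n})$, and $\sum_{k\leq j}\varphi_{k}(\xi)=\chi(2^{-j}\xi)$. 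Setting $g:=\mathcal{F}^{-1}[\chi]$ and $g_{j}(x):=2^{jn}g(2^{j}x)$, the same scaling argument shows $g$ is Schwartz and $\Vert g_{j}\Vert_{L^{1}}=\Vert g\Vert_{L^{1}}$. Lemma \ref{Young-Morrey} then gives $\Vert g_{j}\ast v\Vert_{\mathcal{M}_{q}^{p}}\leq\Vert g\Vert_{L^{1}}\Vert v\Vert_{\mathcal{M}_{q}^{p}}$, and $C_{2}:=\Vert g\Vert_{L^{1}}$.

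\emph{Main obstacle.} The only delicate point is justifying $S_{j}v=g_{j}\ast v$, that is, that the series $\sum_{k\leq j}\Delta_{k}v$ converges in $\mathcal{S}^{\prime}$ to $g_{j}\ast v$ for $v\in\mathcal{M}_{q}^{p}$. For $m<j$ the partial sums satisfy $\sum_{m<k\leq j}\Delta_{k}v=g_{j}\ast v-g_{m}\ast v$, so it suffices to show $g_{m}\ast v\to0$ as $m\to-\infty$. To do this I would use Lemma \ref{Holder-Morrey}(ii) with $u_{1}=v$ and $u_{2}=g_{m}(x-\cdot)$, the latter viewed in the conjugate Morrey-type space. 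This gives
\[
\Vert g_{m}\ast v\Vert_{L^{\infty}}\leq C\,2^{mn/p}\Vert v\Vert_{\mathcal{M}_{q}^{p}},
\]
which tends to $0$ as $m\to-\infty$ because $p<\infty$. Equivalently, one can cite the standard fact $\mathcal{M}_{q}^{p}\subset\mathcal{S}_{h}^{\prime}$ together with the same estimate for $\tilde{S}_{m}$. In either form this shows $\mathcal{M}_{q}^{p}\subset\mathcal{S}_{h}^{\prime}$, so the Littlewood--Paley decomposition applies, the identification holds, and the bound for $S_{j}$ follows.
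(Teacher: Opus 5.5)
Your proposal is correct and follows the paper's route: the paper simply calls the lemma an immediate consequence of Lemma \ref{Young-Morrey}, which is exactly your observation that $\Delta_j$ and $S_j$ are convolutions with the $L^1$-normalized dilates $2^{jn}h(2^j\cdot)$ and $2^{jn}g(2^j\cdot)$ of fixed Schwartz kernels, and you also justify the identification $S_jv=g_j\ast v$, which the paper leaves implicit. One small fix: Lemma \ref{Holder-Morrey}(ii) cannot by itself give $\Vert g_m\ast v\Vert_{L^\infty}\leq C2^{mn/p}\Vert v\Vert_{\mathcal{M}_q^p}$ when $q<p$, since an $L^1$ target forces $q=p$; the bound follows directly by splitting $\mathbb{R}^n$ into $B_{2^{-m}}(x)$ and the annuli $B_{2^{l+1-m}}(x)\setminus B_{2^{l-m}}(x)$, $l\geq0$, and using the decay of $g$ together with $\int_{B_\rho(x)}|v|\,dy\leq C\rho^{n-n/p}\Vert v\Vert_{\mathcal{M}_q^p}$.
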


\begin{lemma}
\label{Delta-j.Sj.Linfty} Consider, for all $j\in\mathbb{Z}$, the operators
$\Delta_{j}$ and $S_{j}$ as defined above. Then, we have the estimates
\[
\Vert\Delta_{j}u\Vert_{L^{\infty}}\leq C_{1}\Vert u\Vert_{L^{\infty}},\forall
j\in\mathbb{Z}, \forall u\in L^{\infty}
\]
and
\[
\Vert S_{j}v\Vert_{L^{\infty}}\leq C_{2}\Vert v\Vert_{L^{\infty}},\forall
j\in\mathbb{Z}, \forall v\in L^{\infty},
\]
for some constants $C_{1}=C_{1}(n)>0$ and $C_{2}=C_{2}(n)>0$.
\end{lemma}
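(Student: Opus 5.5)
The statement to prove is Lemma \ref{Delta-j.Sj.Linfty}: the operators $\Delta_j$ and $S_j$ are uniformly bounded on $L^\infty$. I would follow the same route as for Lemma \ref{Delta-j.Sj.Morrey}, with Young's inequality $\Vert k\ast u\Vert_{L^\infty}\le\Vert k\Vert_{L^1}\Vert u\Vert_{L^\infty}$ taking the place of Lemma \ref{Young-Morrey}.

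For $\Delta_j$, I use that $\Delta_j u=h_j\ast u$, where $h_j:=\mathcal{F}^{-1}[\varphi_j]=2^{jn}h(2^j\cdot)$ and $h:=\mathcal{F}^{-1}[\varphi]\in\mathcal{S}$. A change of variables gives $\Vert h_j\Vert_{L^1}=\Vert h\Vert_{L^1}$ for every $j$. Young's inequality then gives the first estimate with $C_1=\Vert h\Vert_{L^1}$, which depends only on $n$ through the fixed choice of $\varphi$.

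For $S_j$, the sum $\sum_{k\le j}\Delta_k$ has multiplier $\sum_{k\le j}\varphi(2^{-k}\xi)$. From the partition-of-unity identities, this equals $\psi(2^{-(j+1)}\xi)$ for $\xi\neq0$. Indeed, comparing the two decompositions of $1$ and using that both $\sum_{k<0}\varphi_k$ and $\psi$ are compactly supported, one gets $\sum_{k\le -1}\varphi_k=\psi$ on $\mathbb{R}^n\setminus\{0\}$, and then one rescales. Hence $S_jv=\tilde S_{j+1}v=2^{(j+1)n}\check\psi(2^{j+1}\cdot)\ast v$, at least for $v\in\mathcal{S}'_h$, where the series converges.

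I expect the main obstacle to be making sense of $S_j$ on all of $L^\infty$, since constants have spectrum at the origin. I would handle this by interpreting $S_jv$ as $\tilde S_{j+1}v$ modulo polynomials (that is, in $\mathcal{S}'_h$), which is the convention of \cite{Bahouri-Chemin-Danchin}. With that settled, Young's inequality gives $\Vert S_jv\Vert_{L^\infty}\le\Vert\check\psi\Vert_{L^1}\Vert v\Vert_{L^\infty}$, so one can take $C_2=\Vert\check\psi\Vert_{L^1}$.
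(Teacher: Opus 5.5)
Your argument is correct and is the standard proof behind the paper's citation of \cite{Bahouri-Chemin-Danchin}; it is also the same Young-inequality argument the paper invokes for Lemma \ref{Delta-j.Sj.Morrey}: the kernels $2^{jn}h(2^{j}\cdot)$ and $2^{(j+1)n}\check\psi(2^{j+1}\cdot)$ have $j$-independent $L^{1}$ norms, and the telescoping identity $\sum_{k=N}^{j}\Delta_k v=\tilde S_{j+1}v-\tilde S_N v$, together with $\tilde S_N v\to0$, gives $S_j=\tilde S_{j+1}$ on $\mathcal{S}'_h$, which covers every application in the paper. One small correction: drop the phrase ``modulo polynomials'', since the $L^\infty$ norm is not defined on classes modulo polynomials and that convention differs from $\mathcal{S}'_h$; Bahouri--Chemin--Danchin instead define $\dot S_j$ directly as the Fourier multiplier, which is exactly your $\tilde S_{j+1}$.
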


Now, we are primed to introduce homogeneous Besov-Morrey spaces and recall
their fundamental properties (see \cite{Bie-Wang-Yao, Chen-Chen,
Kozono-Yamazaki, Mazzucato}).

\begin{definition}
Let $1\leq q\leq p<\infty,1\leq r\leq\infty$ and $s\in\mathbb{R}$. The
homogeneous Besov-Morrey space $\mathcal{N}_{p,q,r}^{s}=\mathcal{N}%
_{p,q,r}^{s}(\mathbb{R}^{n})$ is defined as the set off all $u\in
\mathcal{S}_{h}^{^{\prime}}$ such that
\[
\Delta_{j}u\in\mathcal{M}_{q}^{p},\forall j\in\mathbb{Z}%
\]
and
\[
\left\Vert u\right\Vert _{\mathcal{N}_{p,q,r}^{s}}:=\left\Vert \left\{
2^{sj}\parallel\Delta_{j}u\parallel_{\mathcal{M}_{q}^{p}}\right\}
_{j=-\infty}^{\infty}\right\Vert _{\ell^{r}(\mathbb{Z})}<\infty.
\]

\end{definition}

\begin{remark}
The space $\mathcal{N}_{p,q,r}^{s}$, equipped with the norm $\parallel.
\parallel_{\mathcal{N}_{p,q,r}^{s}}$, is a Banach space, and it is
continuously embedded in $\mathcal{S}^{^{\prime}}_{h}$. Moreover, as a direct
consequence of the inclusion relations in $\mathcal{M}^{p}_{q}$ and $\ell
^{r}(\mathbb{Z})$, it follows that $\mathcal{N}_{p,q_{1},r_{1}}^{s}%
\subset\mathcal{N}_{p,q_{2},r_{2}}^{s}$, given that $1\leq q_{2}\leq q_{1}\leq
p<\infty$ and $1\leq r_{1}\leq r_{2}\leq\infty$.
\end{remark}

We will now present two results originally established in
\cite{Kozono-Yamazaki}. The first addresses continuous embeddings involving
Besov and Besov-Morrey spaces and is pivotal for the development of a series
of estimates. Following this, we introduce a Mikhlin-H\"{o}rmander type
theorem applicable to homogeneous Besov-Morrey spaces, which provides a
valuable resource for analyzing the behavior of operators within these spaces.

\begin{lemma}
\label{embeddings} Consider $1\leq q\leq p<\infty, 1\leq r\leq\infty,
s\in\mathbb{R}$ and $\theta\in(0,1)$. Then, we have the continuous inclusions
\[
\mathcal{N}_{p,q,r}^{s}\hookrightarrow\dot{B}_{\infty,r}^{-\frac{n}{p}+s}%
\quad\text{and}\quad\mathcal{N}_{p,q,r}^{s}\hookrightarrow\mathcal{N}%
_{\frac{p}{\theta},\frac{q}{\theta},r}^{-\frac{n}{p}(1-\theta)+s}.
\]

\end{lemma}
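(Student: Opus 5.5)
The plan is to reduce both inclusions to a single Bernstein-type inequality in Morrey spaces, applied blockwise to $\Delta_j u$:
\[
\Vert f\Vert_{L^{\infty}}\leq C2^{\frac{n}{p}j}\Vert f\Vert_{\mathcal{M}_{q}^{p}},\quad\text{whenever }\operatorname{supp}\mathcal{F}[f]\subset\{|\xi|\leq 2^{j+2}\}.
\]
Given this, the first inclusion is immediate. Taking $f=\Delta_j u$ yields $2^{(s-\frac{n}{p})j}\Vert\Delta_j u\Vert_{L^\infty}\leq C2^{sj}\Vert\Delta_j u\Vert_{\mathcal{M}_q^p}$. Taking the $\ell^r(\mathbb{Z})$ norm then gives $\Vert u\Vert_{\dot B^{s-n/p}_{\infty,r}}\leq C\Vert u\Vert_{\mathcal{N}^s_{p,q,r}}$; membership in $\mathcal{S}'_h$ is inherited from the definition of $\mathcal{N}^s_{p,q,r}$.

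To prove the Bernstein inequality, I choose $\chi\in\mathcal{S}$ with $\hat\chi=1$ on $\{|\xi|\leq 4\}$ and set $\chi_j=2^{jn}\chi(2^j\cdot)$, so that $f=\chi_j\ast f$. This gives $|f(x)|\leq\int|\chi_j(x-y)||f(y)|dy$, with $|\chi_j(z)|\leq C_N2^{jn}(1+2^j|z|)^{-N}$. I then cover $\mathbb{R}^n$ by cubes $Q_k=x+2^{-j}(k+[0,1]^n)$, $k\in\mathbb{Z}^n$, each contained in a ball of radius $\sim 2^{-j}$. On each such ball, H\"older's inequality (using $q\geq1$) and the definition of the Morrey norm give
\[
\int_{B_R}|f|\leq|B_R|^{1-\frac1q}\Big(\int_{B_R}|f|^q\Big)^{\frac1q}\leq CR^{n-\frac{n}{p}}\Vert f\Vert_{\mathcal{M}_q^p}.
\]
Hence $|f(x)|\leq C\sum_k 2^{jn}(1+|k|)^{-N}2^{-j(n-\frac np)}\Vert f\Vert_{\mathcal{M}^p_q}=C2^{\frac np j}\Vert f\Vert_{\mathcal{M}^p_q}$ for $N>n$. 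This summation step is the only real work, and I expect it to be the main (mild) obstacle: one has to make sure the Morrey norm is used on balls at the scale $2^{-j}$ matching the frequency localization.

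For the second inclusion, I use an elementary interpolation bound:
\[
\Vert g\Vert_{\mathcal{M}^{p/\theta}_{q/\theta}}\leq\Vert g\Vert_{L^\infty}^{1-\theta}\Vert g\Vert_{\mathcal{M}^p_q}^{\theta}.
\]
This follows by bounding $|g|^{q/\theta}\leq\Vert g\Vert_\infty^{q/\theta-q}|g|^q$ inside $R^{\theta(\frac np-\frac nq)}(\int_{B_R}|g|^{q/\theta})^{\theta/q}$, which produces exactly $\Vert g\Vert_\infty^{1-\theta}\big(R^{\frac np-\frac nq}(\int_{B_R}|g|^q)^{1/q}\big)^\theta$. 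Note also that $1\leq q/\theta\leq p/\theta<\infty$, so the target space is admissible. Applying this with $g=\Delta_j u$ and inserting the Bernstein inequality gives $\Vert\Delta_ju\Vert_{\mathcal{M}^{p/\theta}_{q/\theta}}\leq C2^{\frac np(1-\theta)j}\Vert\Delta_ju\Vert_{\mathcal{M}^p_q}$. Multiplying by $2^{(s-\frac np(1-\theta))j}$ and taking $\ell^r$ norms concludes the proof.
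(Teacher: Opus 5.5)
Your proof is correct and complete. The paper gives no proof of this lemma; it quotes it from Kozono--Yamazaki, so there is no in-paper argument to compare with. What you wrote is the standard route to such embeddings: first the blockwise Bernstein bound $\Vert\Delta_j u\Vert_{L^\infty}\leq C2^{\frac{n}{p}j}\Vert\Delta_j u\Vert_{\mathcal{M}^p_q}$, then the pointwise interpolation $\Vert g\Vert_{\mathcal{M}^{p/\theta}_{q/\theta}}\leq\Vert g\Vert_{L^\infty}^{1-\theta}\Vert g\Vert_{\mathcal{M}^p_q}^{\theta}$.

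The details check out:
\begin{itemize}
\item For $y$ in the cube $Q_k$ one has $1+2^j|x-y|\geq c_n(1+|k|)$. This is what turns the kernel bound into the summable factor $(1+|k|)^{-N}$.
\item H\"older on a ball of radius comparable to $2^{-j}$ produces the factor $C2^{-j(n-\frac{n}{p})}$, uniformly in $j$.
\item The interpolation inequality is an exact computation.
\item The indices are consistent, since $s-\frac{n}{p}(1-\theta)-\frac{n\theta}{p}=s-\frac{n}{p}$.
\end{itemize}

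The only point you leave implicit is standard and harmless. Since $\Delta_j u$ is a tempered distribution with compactly supported Fourier transform, it is a smooth function of polynomial growth. Hence $\Delta_j u=\chi_j\ast\Delta_j u$ holds pointwise as an absolutely convergent integral, and your cube summation itself confirms the absolute convergence.

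A side benefit of your argument: summing the blockwise $L^\infty$ bound over $j$ gives directly the embedding $\mathcal{N}^{\frac{n}{p}}_{p,q,1}\hookrightarrow\mathcal{N}^{\frac{n}{p}}_{p,q,\infty}\cap L^\infty$ of Remark \ref{embedding-in-Linfty}, which the paper also only cites.
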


\begin{remark}
\label{embedding-in-Linfty} As noted in \cite{Mazzucato}, the above result
also implies the continuous embedding
\[
\mathcal{N}_{p,q,1}^{\frac{n}{p}}\hookrightarrow\mathcal{N}_{p,q,\infty
}^{\frac{n}{p}}\cap L^{\infty}.
\]

\end{remark}

\begin{lemma}
\label{Mikhlin-Hormander} Let $N\geq[n/2]+1$, $1\leq q\leq p<\infty, 1\leq
r\leq\infty$, $m,s\in\mathbb{R}$ and $P$ a operator that, seen as a Fourier
multiplier, has symbol $f$, i.e., $\mathcal{F}[Pu]=f\mathcal{F}[u]$. If $f$ is
a $C^{N}$ function in $\mathbb{R}^{n}\backslash\left\{  0\right\}  $,
satisfying
\[
\vert(\partial^{\alpha}/\partial\xi)f(\xi)\vert\leq M\vert\xi\vert
^{m-\vert\alpha\vert},
\]
for all $\alpha\in\mathbb{N}^{n}$ with $\vert\alpha\vert\leq N$ and some
constant $M>0$, then $P$ is a continuous linear operator from $\mathcal{N}%
_{p,q,r}^{s}$ to $\mathcal{N}_{p,q,r}^{s-m}$ and fulfills the estimate
\[
\Vert P\Vert_{L(\mathcal{N}_{p,q,r}^{s};\mathcal{N}_{p,q,r}^{s-m})}\leq CM,
\]
for some constant $C=C(m,n)>0$.
\end{lemma}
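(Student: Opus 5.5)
The plan is to reduce everything to a uniform $L^{1}$ bound on the localized kernels of $P$, and then apply Lemma \ref{Young-Morrey} block by block. Set $\tilde{\varphi}:=\varphi(2\,\cdot)+\varphi+\varphi(\cdot/2)$, so that $\tilde{\varphi}\equiv1$ on the support of $\varphi$ and $\tilde\varphi$ is supported in a fixed annulus $\{c_{1}\leq|\xi|\leq c_{2}\}$. Put $\tilde{\varphi}_{j}:=\tilde\varphi(2^{-j}\cdot)$. Since $P$ is a Fourier multiplier, it commutes with $\Delta_{j}$, and $\varphi_{j}=\tilde{\varphi}_{j}\varphi_{j}$. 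Hence
\[
\Delta_{j}Pu=K_{j}\ast\Delta_{j}u,\qquad K_{j}:=\mathcal{F}^{-1}[f\,\tilde{\varphi}_{j}].
\]
By Lemma \ref{Young-Morrey}, this gives $\Vert\Delta_{j}Pu\Vert_{\mathcal{M}_{q}^{p}}\leq\Vert K_{j}\Vert_{L^{1}}\Vert\Delta_{j}u\Vert_{\mathcal{M}_{q}^{p}}$. The whole lemma therefore reduces to the kernel bound $\Vert K_{j}\Vert_{L^{1}}\leq CM2^{jm}$ with $C$ independent of $j$.

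For the kernel bound I would first rescale. Writing $K_{j}(x)=2^{jn}g_{j}(2^{j}x)$ with $g_{j}:=\mathcal{F}^{-1}[f(2^{j}\cdot)\tilde{\varphi}]$, we get $\Vert K_{j}\Vert_{L^{1}}=\Vert g_{j}\Vert_{L^{1}}$. The symbol $\sigma_{j}:=f(2^{j}\cdot)\tilde\varphi$ is supported in the fixed annulus. Since $|\xi|\sim1$ there, the Leibniz rule and the hypothesis $|\partial^{\alpha}f(\xi)|\leq M|\xi|^{m-|\alpha|}$ give
\[
|\partial^{\alpha}\sigma_{j}(\xi)|\leq C M2^{jm},\qquad |\alpha|\leq N,
\]
with $C$ depending only on $m,n$ and $\varphi$. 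Next apply Cauchy--Schwarz:
\[
\Vert g_{j}\Vert_{L^{1}}\leq\Vert(1+|x|^{2})^{-N/2}\Vert_{L^{2}}\,\Vert(1+|x|^{2})^{N/2}g_{j}\Vert_{L^{2}}.
\]
The first factor is finite precisely because $2N>n$, which is where $N\geq[n/2]+1$ is used. For the second factor, use $(1+|x|^{2})^{N/2}\leq C\sum_{|\alpha|\leq N}|x^{\alpha}|$ together with Plancherel, which turns it into $\sum_{|\alpha|\leq N}\Vert\partial^{\alpha}\sigma_{j}\Vert_{L^{2}}\leq CM2^{jm}$, the $L^{2}$ norm being taken over a set of bounded measure. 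This yields the kernel bound.

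Once the kernel bound is in hand, we have
\[
2^{(s-m)j}\Vert\Delta_{j}Pu\Vert_{\mathcal{M}_{q}^{p}}\leq CM\,2^{sj}\Vert\Delta_{j}u\Vert_{\mathcal{M}_{q}^{p}}.
\]
Taking the $\ell^{r}(\mathbb{Z})$ norm gives $\Vert Pu\Vert_{\mathcal{N}_{p,q,r}^{s-m}}\leq CM\Vert u\Vert_{\mathcal{N}_{p,q,r}^{s}}$.

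The main obstacle is not this estimate but giving $Pu$ a meaning for $u\in\mathcal{S}_{h}'$ and checking that it lies in $\mathcal{S}_{h}'$, since $f$ may be singular at the origin. I would define $Pu:=\sum_{j}K_{j}\ast\Delta_{j}u$. The high-frequency part of this series converges in $\mathcal{S}'$ by the bounds above. For the low frequencies, I would use Lemma \ref{embeddings} to get $\Vert K_{j}\ast\Delta_{j}u\Vert_{L^{\infty}}\leq C2^{j(m-s+n/p)}$ times an $\ell^{r}$ sequence. When $s-m<n/p$ this gives convergence in $L^{\infty}$, and the vanishing of $\tilde{S}_{k}Pu$ as $k\to-\infty$. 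When $s-m\geq n/p$, I would follow the convention of Kozono and Yamazaki: interpret the low-frequency sum in the realization sense, or restrict to the dense subclass of $u$ with finitely many nonzero blocks and then extend $P$ by continuity. The linearity and the operator-norm bound $\leq CM$ then pass to the limit.
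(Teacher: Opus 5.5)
The paper gives no proof of this lemma; it quotes it from Kozono--Yamazaki. Your argument is the standard proof and is correct as far as the norm inequality goes: the uniform bound $\Vert\mathcal{F}^{-1}[f\tilde{\varphi}_{j}]\Vert_{L^{1}}\leq CM2^{jm}$ comes from rescaling to a fixed annulus, Cauchy--Schwarz with $2N>n$, and Plancherel, and then Lemma \ref{Young-Morrey} is applied block by block. Your construction of $Pu=\sum_{j}K_{j}\ast\Delta_{j}u$ as an element of $\mathcal{S}_{h}^{\prime}$ is also correct for $s-m<\frac{n}{p}$. The same $L^{\infty}$ bound also covers $s-m=\frac{n}{p}$ with $r=1$, because the sequence is then summable, so you should place that case there. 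It is exactly the case used for $curl^{-1}:\mathcal{N}_{p,q,1}^{\frac{3}{p}-1}\to\mathcal{N}_{p,q,1}^{\frac{3}{p}}$ in Section \ref{Sec4}.

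What does not work is your fallback for the remaining range. With the spaces realized inside $\mathcal{S}_{h}^{\prime}$, as in this paper, the lemma is false for $s-m>\frac{n}{p}$, and also for $s-m=\frac{n}{p}$ with $r>1$.

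Here is a counterexample. Take $P=(-\Delta)^{-1}$ and $s=\frac{n}{p}-1$. Let $\theta\in\mathcal{S}$ with $\hat{\theta}\geq0$, $\hat{\theta}\not\equiv0$, supported in a thin annulus, and set $u=\sum_{k\leq0}a_{k}2^{kn}\theta(2^{k}\cdot)$ with $a_{k}=2^{-k(s+n-\frac{n}{p})}(1+|k|)^{-2}$.
\begin{itemize}
\item By Morrey scaling, $u\in\mathcal{N}_{p,q,r}^{s}\subset\mathcal{S}_{h}^{\prime}$ for every $r$.
\item Pair the $k$-th piece of $(-\Delta)^{-1}u$ with some $\phi\in\mathcal{S}$ satisfying $\hat{\phi}(0)\neq0$. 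The result has size about $2^{-k}(1+|k|)^{-2}$, which tends to $\infty$ as $k\to-\infty$.
\end{itemize}
Hence $\sum_{j}K_{j}\ast\Delta_{j}u$ diverges in $\mathcal{S}^{\prime}$, and no element of $\mathcal{S}_{h}^{\prime}$ has dyadic blocks $K_{j}\ast\Delta_{j}u$.

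Extension by continuity from finite block sums fails for the same reason. The target $\mathcal{N}_{p,q,r}^{s-m}$ is not complete in that range, as the same example shows. In addition, finite block sums are not dense when $r=\infty$.

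There are two correct fixes.
\begin{itemize}
\item Restrict the statement to $s-m<\frac{n}{p}$, or $s-m=\frac{n}{p}$ and $r=1$. This covers every application in the paper.
\item Regard the spaces as subspaces of $\mathcal{S}^{\prime}$ modulo polynomials. There the series $\sum_{j}K_{j}\ast\Delta_{j}u$ converges for all $s,m$, because the dyadic blocks of test functions with all moments vanishing decay in $L^{1}$ faster than any power of $2^{j}$ as $j\to-\infty$.
\end{itemize}
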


The approach we will use to establish our main results involves employing
fixed-point arguments. In this context, it will be advantageous to
appropriately reformulate the extended stationary Hall-MHD system
(\ref{Ext.HMHD1})-(\ref{Ext.HMHD4}) so that it can be viewed as a generalized
Navier-Stokes system. To clarify our strategy, we will now revisit some
concepts from the theory of well-posedness for the abstract equation
\begin{equation}
\label{abs.eq.}u=\mathcal{L}f + \mathcal{B}(u,u),
\end{equation}
where $f$ is a given data in some Banach space $Y$ and the unknown solution
$u$ of (\ref{abs.eq.}) takes values in a Banach space $X$. Moreover,
$\mathcal{L}:Y \to X$ is a linear operator and $\mathcal{B}:X\times X\to X$ a
bilinear operator, both densely defined. With the aim of simplifying our
analysis and mitigating the need for extensive fixed-point calculations, we
will leverage the following corollary of the Contraction Mapping Theorem in
Banach spaces. But before stating it, let us begin by defining what we mean by
well-posed in the context of equations like (\ref{abs.eq.}). For more details
on this subject and an in-depth discussion, please refer to
\cite{Bejenaru-Tao}.

\begin{definition}
\label{well-posed} The equation (\ref{abs.eq.}) is called well-posed from the
Banach space $Y$ to the Banach space $X$ if there exists constants
$\epsilon,C_{0}>0$ such that

\begin{itemize}
\item[(i)] {For all $f\in B_{Y}(0;\epsilon):=\left\{  f\in Y; \Vert f\Vert
_{Y}<\epsilon\right\}  $, there exists a unique solution $u$ of (\ref{abs.eq.}%
) in $B_{X}(0;C_{0}\epsilon):=\left\{  u\in X;\Vert u\Vert_{X}<C_{0}%
\epsilon\right\}  $.}

\item[(ii)] {The solution map $f\in B_{Y}(0;\epsilon) \mapsto u\in
B_{X}(0;C_{0}\epsilon)$, which is well defined according to (i), is Lipschitz
continuous.}
\end{itemize}
\end{definition}

\begin{lemma}
\label{auxiliar1}(see \cite{Bejenaru-Tao, Lemarie}) Let $X$ and $Y$ Banach
spaces, $\mathcal{L}:Y\rightarrow X$ a linear operator densely defined and
$\mathcal{B}:X\times X\rightarrow X$ a bilinear operator, also densely
defined. If $\mathcal{L}$ and $\mathcal{B}$ are both continuous, i.e., if
there exist constants $C_{1},C_{2}>0$ such that
\[
\Vert\mathcal{L}f\Vert_{X}\leq C_{1}\Vert f\Vert_{Y},\forall f\in Y
\]
and
\[
\Vert\mathcal{B}(u,v)\Vert_{X}\leq C_{2}\Vert u\Vert_{X}\Vert v\Vert
_{X},\forall u,v\in X
\]
then, the equation (\ref{abs.eq.}) is well-posed from $Y$ to $X$, in the sense
of Definition \ref{well-posed}.
\end{lemma}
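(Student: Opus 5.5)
The proof is the standard Picard iteration for the map $\Phi_f(u):=\mathcal{L}f+\mathcal{B}(u,u)$, with explicit constants. Since $\mathcal{L}$ and $\mathcal{B}$ are bounded, I will first extend them by continuity from their dense domains to all of $Y$ and $X\times X$. The extensions keep the same bounds $C_1,C_2$, so I may regard them as everywhere defined. I then fix
\[
\epsilon:=\frac{1}{8C_1C_2},\qquad C_0:=2C_1,
\]
so that $2C_2C_0\epsilon=4C_1C_2\epsilon=\tfrac12<1$. The whole argument rests on the bilinear identity
\[
\mathcal{B}(u,u)-\mathcal{B}(v,v)=\mathcal{B}(u-v,u)+\mathcal{B}(v,u-v).
\]
Together with the continuity of $\mathcal{B}$, it gives
\[
\Vert\mathcal{B}(u,u)-\mathcal{B}(v,v)\Vert_X\le C_2\big(\Vert u\Vert_X+\Vert v\Vert_X\big)\Vert u-v\Vert_X.
\]

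\emph{Existence.} Fix $f\in B_Y(0;\epsilon)$ and let $\rho:=2C_1\Vert f\Vert_Y$, which satisfies $\rho<C_0\epsilon$. For $\Vert u\Vert_X\le\rho$, the bounds on $\mathcal{L}$ and $\mathcal{B}$ give
\[
\Vert\Phi_f(u)\Vert_X\le C_1\Vert f\Vert_Y+C_2\rho^2=C_1\Vert f\Vert_Y\big(1+4C_1C_2\Vert f\Vert_Y\big)\le 2C_1\Vert f\Vert_Y=\rho,
\]
using $4C_1C_2\Vert f\Vert_Y<\tfrac12$. So $\Phi_f$ maps the closed ball $\overline{B}_X(0;\rho)$ into itself. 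By the difference estimate above, on this ball $\Phi_f$ is Lipschitz with constant $2C_2\rho<\tfrac12$, hence a contraction. The Banach fixed point theorem, applied on this complete metric space, then yields a solution $u$ of (\ref{abs.eq.}) with $\Vert u\Vert_X\le\rho<C_0\epsilon$.

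\emph{Uniqueness in the open ball.} Suppose $u,v\in B_X(0;C_0\epsilon)$ both solve (\ref{abs.eq.}) with the same $f$. The difference estimate gives
\[
\Vert u-v\Vert_X\le 2C_2C_0\epsilon\,\Vert u-v\Vert_X=\tfrac12\Vert u-v\Vert_X,
\]
which forces $u=v$. This is the one point needing care. The fixed point is produced in the smaller ball of radius $\rho$, but uniqueness is claimed in the larger ball $B_X(0;C_0\epsilon)$ required by Definition \ref{well-posed}(i). That is exactly why $\epsilon$ was chosen so that $2C_2C_0\epsilon<1$, rather than only $2C_2\rho<1$.

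\emph{Lipschitz continuity of the solution map.} Let $u,\tilde u\in B_X(0;C_0\epsilon)$ be the solutions for data $f,\tilde f\in B_Y(0;\epsilon)$. Subtracting the two equations and using the same bilinear identity gives
\[
u-\tilde u=\mathcal{L}(f-\tilde f)+\mathcal{B}(u-\tilde u,u)+\mathcal{B}(\tilde u,u-\tilde u).
\]
Taking norms yields
\[
\Vert u-\tilde u\Vert_X\le C_1\Vert f-\tilde f\Vert_Y+\tfrac12\Vert u-\tilde u\Vert_X,
\]
so that $\Vert u-\tilde u\Vert_X\le 2C_1\Vert f-\tilde f\Vert_Y$. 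This is property (ii) of Definition \ref{well-posed}, and completes the proof.
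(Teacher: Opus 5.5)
Your argument is correct. It is the standard contraction-mapping (Picard iteration) proof, and it handles the one delicate point properly: choosing $\epsilon$ so that $2C_2C_0\epsilon<1$ gives uniqueness in the full ball $B_X(0;C_0\epsilon)$ as well as the Lipschitz bound $\Vert u-\tilde u\Vert_X\le 2C_1\Vert f-\tilde f\Vert_Y$. The paper gives no proof of its own and simply cites Bejenaru--Tao and Lemari\'e-Rieusset, whose proofs proceed in essentially the same way.
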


In order to handle the nonlinear terms in (\ref{Ext.HMHD1})-(\ref{Ext.HMHD4})
and (\ref{aux-NS100}) within the framework of Besov-Morrey spaces, and obtain
estimates suitable for applying Lemma \ref{auxiliar1}, we make use of Bony's
paraproduct decomposition (see \cite{Bony}). For all $u,v\in\mathcal{S}%
_{h}^{^{\prime}}$, this decomposition establishes that%
\begin{equation}
uv=T_{u}v+T_{v}u+R(u,v),\text{ } \label{Bony}%
\end{equation}
where
\[
T_{u}v:=\sum_{j\in\mathbb{Z}}S_{j-2}u\Delta_{j}v,\quad R(u,v):=\sum
_{j\in\mathbb{Z}}\Delta_{j}u\tilde{\Delta}_{j}v\quad\text{and}\quad
\tilde{\Delta}_{j}v:=\sum_{|j-l|\leq1}\Delta_{l}v.
\]

\begin{remark}
\label{Bony-properties}Note that, recalling the identities highlighted in
Remark \ref{operators-properties}, we can promptly establish that
\[
\Delta_{j}(T_{u}v)=\sum_{|j-k|\leq4}\Delta_{j}(S_{k-2}u\Delta_{k}%
v)\quad\text{and}\quad\Delta_{j}(R(u,v))=\sum_{k\geq j-2}\Delta_{j}(\Delta
_{k}u\tilde{\Delta}_{k}v).
\]

\end{remark}

\section{Product estimates}

\label{Sec3} Throughout this section, all content is presented in an
$n$-dimensional setting; that is, we work in $\mathbb{R}^{n}$, with $n\geq1$
assumed unless otherwise specified. Our primary goal is to prove Proposition
\ref{Product:3/p-1} and Proposition \ref{Product:3/p-2}. To achieve this, we
first establish a series of lemmas that provide estimates involving Besov and
Besov-Morrey spaces for the components of Bony's decomposition. These lemmas
will then be used to streamline the arguments in the proofs of the
aforementioned propositions. It is worth noting that some of the estimates
included in the lemmas below can also be found in \cite{Duarte-Ferreira-Villa}%
, albeit with more restrictive assumptions on the regularity index. For other
similar estimates in Besov and Besov-Morrey spaces, see, e.g.,
\cite{Bahouri-Chemin-Danchin, Lemarie, Mazzucato, Perez-Rueda-Villamizar}.

\begin{lemma}
\label{L-inftyxBM} Let $1\leq q\leq p<\infty, 1\leq r\leq\infty$ and
$s\in\mathbb{R}$. Then, $\exists C=C(n,s)>0$ such that
\[
\Vert T_{u}v\Vert_{\mathcal{N}_{p,q,r}^{s}}\leq C\Vert u\Vert_{L^{\infty}%
}\Vert v\Vert_{\mathcal{N}_{p,q,r}^{s}},
\]
for all $u\in L^{\infty}$ and $v\in\mathcal{N}_{p,q,r}^{s}$. Moreover, we
have
\[
\Vert T_{u}v\Vert_{\mathcal{N}_{p,q,r}^{s}}\leq C\Vert u\Vert_{\mathcal{M}%
_{q}^{p}}\Vert v\Vert_{\dot{B}_{\infty,r}^{s}},
\]
for all $u\in\mathcal{M}_{q}^{p}$ and $v\in\dot{B}_{\infty,r}^{s}$.
\end{lemma}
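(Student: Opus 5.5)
The plan is to estimate each dyadic block of $T_u v$ separately, using the localization identity of Remark \ref{Bony-properties},
\[
\Delta_j(T_u v)=\sum_{|j-k|\leq 4}\Delta_j\big(S_{k-2}u\,\Delta_k v\big).
\]
Once a uniform bound for each block is available, the weighted $\ell^r$ summation will follow from a discrete convolution argument.

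For the first estimate, fix $j$ and $k$ with $|j-k|\le 4$.
\begin{itemize}
\item By Lemma \ref{Delta-j.Sj.Morrey}, $\Delta_j$ is bounded on $\mathcal{M}_q^p$ uniformly in $j$, so we may drop it.
\item Lemma \ref{Holder-Morrey}(i) then gives $\Vert S_{k-2}u\,\Delta_k v\Vert_{\mathcal{M}_q^p}\le \Vert S_{k-2}u\Vert_{L^\infty}\Vert\Delta_k v\Vert_{\mathcal{M}_q^p}$.
\item Lemma \ref{Delta-j.Sj.Linfty} bounds $\Vert S_{k-2}u\Vert_{L^\infty}\le C\Vert u\Vert_{L^\infty}$.
\end{itemize}
Multiplying by $2^{sj}=2^{s(j-k)}2^{sk}$ and using $|j-k|\le 4$, which gives $2^{s(j-k)}\le 2^{4|s|}$, we obtain
\[
2^{sj}\Vert\Delta_j(T_uv)\Vert_{\mathcal{M}_q^p}\le C2^{4|s|}\Vert u\Vert_{L^\infty}\sum_{|m|\le4}2^{s(j+m)}\Vert\Delta_{j+m}v\Vert_{\mathcal{M}_q^p}.
\]
Taking the $\ell^r(\mathbb{Z})$ norm in $j$ and applying Minkowski's inequality (nine shifted copies of the same sequence) yields the claim, with a constant depending only on $n$ and $s$.

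The second estimate follows the same chain, with the roles of the two factors swapped in the Hölder step. Now Lemma \ref{Holder-Morrey}(i) is applied with $\Delta_k v\in L^\infty$ as the bounded factor:
\[
\Vert S_{k-2}u\,\Delta_kv\Vert_{\mathcal{M}_q^p}\le\Vert\Delta_kv\Vert_{L^\infty}\Vert S_{k-2}u\Vert_{\mathcal{M}_q^p}\le C\Vert\Delta_kv\Vert_{L^\infty}\Vert u\Vert_{\mathcal{M}_q^p},
\]
where the last inequality uses Lemma \ref{Delta-j.Sj.Morrey}. The same weighting and $\ell^r$ summation then produces $\Vert u\Vert_{\mathcal{M}_q^p}\Vert v\Vert_{\dot B^s_{\infty,r}}$.

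The main point to check is the convergence and identification of $T_uv$ as an element of $\mathcal{S}'_h$, so that its norm is really computed by the blocks $\Delta_j(T_uv)$. The terms $S_{k-2}u\,\Delta_kv$ have Fourier support in annuli of size about $2^k$. Together with the uniform block bounds above, this gives convergence of the series in $\mathcal{S}'$ to an element of $\mathcal{S}'_h$.

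There is one subtlety for general $u\in L^\infty$, where $S_{k-2}u$ need not be defined for $u\notin\mathcal{S}'_h$. One can either interpret $S_{k-2}$ as $\tilde S$-type low-frequency cutoffs, which are bounded on $L^\infty$ with uniformly $L^1$-bounded kernels, or note that the definition of $T_u$ only requires $S_{k-2}u$ to be bounded. I expect this justification to be the only delicate step; the estimates themselves are routine.
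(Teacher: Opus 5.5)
Your argument is the paper's proof. It uses the same block localization, the uniform Morrey and $L^\infty$ bounds for $\Delta_j$ and $S_{k-2}$, and Lemma \ref{Holder-Morrey}(i) with the bounded factor switched between the two estimates, followed by a finitely supported discrete convolution; your Minkowski bound over nine shifts is exactly the paper's Young inequality with $a_l=2^{sl}$ for $|l|\leq 4$.

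The paper does not discuss convergence of $T_uv$ in $\mathcal{S}'_h$ at all, and your claim that annular support plus the block bounds yields it is only guaranteed at low frequencies when $s<\frac{n}{p}$, or when $s=\frac{n}{p}$ and $r=1$. With $g_k=S_{k-2}u\,\Delta_kv$ one only gets $\Vert g_k\Vert_{L^\infty}\lesssim 2^{k(\frac{n}{p}-s)}c_k$ with $(c_k)\in\ell^r$. That covers the paper's later use with $s=\frac{n}{p}-1$, but not arbitrary $s\in\mathbb{R}$, where the low-frequency series need not converge.
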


\noindent{\textbf{Proof.}} Initially, note that, by Lemmas
\ref{Delta-j.Sj.Morrey}, \ref{Holder-Morrey} and \ref{Delta-j.Sj.Linfty}, we
have
\begin{align*}
2^{sj}\Vert\Delta_{j}(T_{u}v)\Vert_{\mathcal{M}_{q}^{p}}  &  =2^{sj}\left\Vert
\sum_{|k-j|\leq4}\Delta_{j}(S_{k-2}u\Delta_{k}v)\right\Vert _{\mathcal{M}%
_{q}^{p}}\\
&  \leq C2^{sj}\sum_{|k-j|\leq4}\Vert S_{k-2}u\Delta_{k}v\Vert_{\mathcal{M}%
_{q}^{p}}\\
&  \leq C2^{sj}\sum_{|k-j|\leq4}\Vert S_{k-2}u\Vert_{L^{\infty}}\Vert
\Delta_{k}v\Vert_{\mathcal{M}_{q}^{p}}\\
&  \leq C2^{sj}\Vert u\Vert_{L^{\infty}}\sum_{|k-j|\leq4}\Vert\Delta_{k}%
v\Vert_{\mathcal{M}_{q}^{p}}\\
&  =C\Vert u\Vert_{L^{\infty}}\sum_{|k-j|\leq4}2^{s(j-k)}2^{sk}\Vert\Delta
_{k}v\Vert_{\mathcal{M}_{q}^{p}}\\
&  =C\Vert u\Vert_{L^{\infty}}\left(  \left\{  a_{l}\right\}  \ast\left\{
2^{sl^{^{\prime}}}\Vert\Delta_{l^{^{\prime}}}v\Vert_{\mathcal{M}_{q}^{p}%
}\right\}  \right)  _{j},
\end{align*}
where
\[
a_{l}=\left\{
\begin{array}
[c]{rclll}%
2^{sl}, & \text{if} & \,\,|l|\leq4 &  & \\
0, & \text{if} & \,\,|l|>4 &  &
\end{array}
\right.  .
\]
So, since $\left\{  a_{l}\right\}  \in\ell^{1}(\mathbb{Z})$ and $\left\{
2^{sl^{^{\prime}}}\Vert\Delta_{l^{^{\prime}}}v\Vert_{\mathcal{M}_{q}^{p}%
}\right\}  \in\ell^{r}(\mathbb{Z})$, we have, by Young inequality, that
\begin{align*}
\Vert T_{u}v\Vert_{\mathcal{N}_{p,q,r}^{s}}  &  =\left\Vert \left\{
2^{sj}\Vert\Delta_{j}(T_{u}v)\Vert_{\mathcal{M}_{q}^{p}}\right\}  \right\Vert
_{\ell^{r}(\mathbb{Z})}\\
&  \leq C\Vert u\Vert_{L^{\infty}}\left(  \sum_{|l|\leq4}2^{sl}\right)
\left\Vert \left\{  2^{sj}\Vert\Delta_{j}v\Vert_{\mathcal{M}_{q}^{p}}\right\}
\right\Vert _{\ell^{r}(\mathbb{Z})}\\
&  \leq C\Vert u\Vert_{L^{\infty}}\Vert v\Vert_{\mathcal{N}_{p,q,r}^{s}},
\end{align*}
which proves the first desired inequality. On the other hand, Lemmas
\ref{Delta-j.Sj.Morrey} and \ref{Holder-Morrey} imply that%

\begin{align*}
2^{sj}\Vert\Delta_{j}(T_{u}v)\Vert_{\mathcal{M}_{q}^{p}}  &  =2^{sj}\left\Vert
\sum_{|k-j|\leq4}\Delta_{j}(S_{k-2}u\Delta_{k}v)\right\Vert _{\mathcal{M}%
_{q}^{p}}\\
&  \leq C2^{sj}\sum_{|k-j|\leq4}\Vert S_{k-2}u\Vert_{\mathcal{M}_{q}^{p}}%
\Vert\Delta_{k}v\Vert_{L^{\infty}}\\
&  \leq C2^{sj}\Vert u\Vert_{\mathcal{M}_{q}^{p}}\sum_{|k-j|\leq4}\Vert
\Delta_{k}v\Vert_{L^{\infty}}\\
&  =C\Vert u\Vert_{\mathcal{M}_{q}^{p}}\sum_{|k-j|\leq4}2^{s(j-k)}2^{sk}%
\Vert\Delta_{k}v\Vert_{L^{\infty}}.
\end{align*}
Thus, repeating the previous argument, we conclude the proof.

\begin{flushright}
\ding{110}
\end{flushright}

\begin{lemma}
\label{B-inftyxBM-Holder} Let $1\leq q\leq p<\infty,1\leq r,r_{1},r_{2}%
\leq\infty$ satisfying $\displaystyle\frac{1}{r}=\displaystyle\frac{1}{r_{1}%
}+\displaystyle\frac{1}{r_{2}}$, $s_{1}<0$ and $s_{2}\in\mathbb{R}$. Under
these conditions, there exists a constant $C=C(n,s_{1},s_{2})>0$ such that
\[
\Vert T_{u}v\Vert_{\mathcal{N}_{p,q,r}^{s_{1}+s_{2}}}\leq C\Vert u\Vert
_{\dot{B}_{\infty,r_{1}}^{s_{1}}}\Vert v\Vert_{\mathcal{N}_{p,q,r_{2}}^{s_{2}%
}},
\]
for all $u\in\dot{B}_{\infty,r_{1}}^{s_{1}}$ and $v\in\mathcal{N}_{p,q,r_{2}%
}^{s_{2}}$. Furthermore, the estimate
\[
\Vert T_{u}v\Vert_{\mathcal{N}_{p,q,r}^{s_{1}+s_{2}}}\leq C\Vert
u\Vert_{\mathcal{N}_{p,q,r_{1}}^{s_{1}}}\Vert v\Vert_{\dot{B}_{\infty,r_{2}%
}^{s_{2}}}%
\]
holds true, for all $u\in\mathcal{N}_{p,q,r_{1}}^{s_{1}}$ and $v\in\dot
{B}_{\infty,r_{2}}^{s_{2}}$.
\end{lemma}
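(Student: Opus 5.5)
We follow the scheme of the proof of Lemma \ref{L-inftyxBM}, starting from the localization in Remark \ref{Bony-properties}. For the first estimate, Lemmas \ref{Delta-j.Sj.Morrey} and \ref{Holder-Morrey}(i) give
\[
2^{(s_{1}+s_{2})j}\Vert\Delta_{j}(T_{u}v)\Vert_{\mathcal{M}_{q}^{p}}\leq C\sum_{|k-j|\leq4}2^{(s_{1}+s_{2})j}\Vert S_{k-2}u\Vert_{L^{\infty}}\Vert\Delta_{k}v\Vert_{\mathcal{M}_{q}^{p}}.
\]
The difference from Lemma \ref{L-inftyxBM} is that $\Vert S_{k-2}u\Vert_{L^{\infty}}$ can no longer be bounded uniformly by a norm of $u$. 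Instead we use $S_{k-2}u=\sum_{m\leq k-2}\Delta_{m}u$ and write
\[
2^{s_{1}k}\Vert S_{k-2}u\Vert_{L^{\infty}}\leq\sum_{m\leq k-2}2^{s_{1}(k-m)}\,2^{s_{1}m}\Vert\Delta_{m}u\Vert_{L^{\infty}}.
\]
This is the discrete convolution of $\{2^{s_{1}m}\Vert\Delta_{m}u\Vert_{L^{\infty}}\}$ with the kernel $b_{l}=2^{s_{1}l}\mathbf{1}_{\{l\geq2\}}$. The kernel lies in $\ell^{1}(\mathbb{Z})$ exactly because $s_{1}<0$, and this is the only place the sign condition is needed. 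By Young's inequality, the sequence $c_{k}:=2^{s_{1}k}\Vert S_{k-2}u\Vert_{L^{\infty}}$ therefore satisfies $\Vert\{c_{k}\}\Vert_{\ell^{r_{1}}}\leq C\Vert u\Vert_{\dot{B}_{\infty,r_{1}}^{s_{1}}}$, with $C$ depending only on $s_{1}$.

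Next, set $d_{k}:=2^{s_{2}k}\Vert\Delta_{k}v\Vert_{\mathcal{M}_{q}^{p}}$, so that $\Vert\{d_{k}\}\Vert_{\ell^{r_{2}}}=\Vert v\Vert_{\mathcal{N}_{p,q,r_{2}}^{s_{2}}}$. Since $|j-k|\leq4$, the factor $2^{(s_{1}+s_{2})(j-k)}$ is bounded by a constant depending on $s_{1}+s_{2}$, and we get
\[
2^{(s_{1}+s_{2})j}\Vert\Delta_{j}(T_{u}v)\Vert_{\mathcal{M}_{q}^{p}}\leq C\sum_{|k-j|\leq4}c_{k}d_{k}=C\left(\{a_{l}\}\ast\{c_{l'}d_{l'}\}\right)_{j},
\]
where $a_{l}=\mathbf{1}_{\{|l|\leq4\}}$. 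Taking $\ell^{r}$ norms, Young's inequality with $a\in\ell^{1}$ gives a bound by $C\Vert\{c_{k}d_{k}\}\Vert_{\ell^{r}}$. Hölder's inequality in $\ell^{r}$ with $\frac{1}{r}=\frac{1}{r_{1}}+\frac{1}{r_{2}}$ then yields $\Vert\{c_{k}d_{k}\}\Vert_{\ell^{r}}\leq\Vert\{c_{k}\}\Vert_{\ell^{r_{1}}}\Vert\{d_{k}\}\Vert_{\ell^{r_{2}}}$, which is the first estimate.

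The second estimate is the same argument with the roles of the two spaces exchanged. Using Lemma \ref{Holder-Morrey}(i) in the other order, we bound $\Vert S_{k-2}u\,\Delta_{k}v\Vert_{\mathcal{M}_{q}^{p}}\leq\Vert S_{k-2}u\Vert_{\mathcal{M}_{q}^{p}}\Vert\Delta_{k}v\Vert_{L^{\infty}}$. We then expand $\Vert S_{k-2}u\Vert_{\mathcal{M}_{q}^{p}}\leq\sum_{m\leq k-2}\Vert\Delta_{m}u\Vert_{\mathcal{M}_{q}^{p}}$ and apply the same convolution argument with $s_{1}<0$, obtaining $\Vert\{2^{s_{1}k}\Vert S_{k-2}u\Vert_{\mathcal{M}_{q}^{p}}\}\Vert_{\ell^{r_{1}}}\leq C\Vert u\Vert_{\mathcal{N}_{p,q,r_{1}}^{s_{1}}}$. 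The $\ell^{r}$-Hölder step now pairs this with $\Vert v\Vert_{\dot{B}_{\infty,r_{2}}^{s_{2}}}$.

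The only delicate step is controlling the low-frequency cutoff $S_{k-2}u$ through the geometric kernel, which requires $s_{1}<0$. A minor technical point is that $S_{k-2}u=\sum_{m\leq k-2}\Delta_{m}u$ must converge. This holds for $u\in\mathcal{S}_{h}^{\prime}$ when $s_{1}<0$, since the dyadic pieces decay geometrically at low frequencies.
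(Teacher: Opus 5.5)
Your proposal is correct and follows essentially the same argument as the paper. You localize via Remark \ref{Bony-properties}, apply the Morrey--H\"older bound, control $2^{s_{1}k}\Vert S_{k-2}u\Vert$ by convolution with the $\ell^{1}$ kernel $2^{s_{1}l}\mathbf{1}_{\{l\geq2\}}$ (the one place $s_{1}<0$ is used), and then combine H\"older and Young in sequence spaces. The only differences are cosmetic: you absorb $2^{(s_{1}+s_{2})(j-k)}$ into a constant rather than into the kernel $a_{l}$, and you add a correct remark on the convergence of $S_{k-2}u$.
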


\noindent{\textbf{Proof.}} Making use of Lemmas \ref{Delta-j.Sj.Morrey} and
\ref{Holder-Morrey}, we get
\begin{align}
2^{(s_{1}+s_{2})j}\Vert\Delta_{j}(T_{u}v)\Vert_{\mathcal{M}_{q}^{p}}  &
=2^{(s_{1}+s_{2})j}\left\Vert \sum_{|k-j|\leq4}\Delta_{j}(S_{k-2}u\Delta
_{k}v)\right\Vert _{\mathcal{M}_{q}^{p}}\nonumber\\
&  \leq C2^{(s_{1}+s_{2})j}\sum_{|k-j|\leq4}\left\Vert S_{k-2}u\Delta
_{k}v\right\Vert _{\mathcal{M}_{q}^{p}}\nonumber\\
&  \leq C2^{(s_{1}+s_{2})j}\sum_{|k-j|\leq4}\Vert S_{k-2}u\Vert_{L^{\infty}%
}\Vert\Delta_{k}v\Vert_{\mathcal{M}_{q}^{p}}\nonumber\\
&  =C\sum_{|k-j|\leq4}2^{(s_{1}+s_{2})(j-k)}2^{s_{1}k}\Vert S_{k-2}%
u\Vert_{L^{\infty}}2^{s_{2}k}\Vert\Delta_{k}v\Vert_{\mathcal{M}_{q}^{p}%
}\nonumber\\
&  =C\left(  \left\{  a_{l^{^{\prime}}}\right\}  \ast\left\{  2^{s_{1}l}\Vert
S_{l-2}u\Vert_{L^{\infty}}2^{s_{2}l}\Vert\Delta_{l}v\Vert_{\mathcal{M}_{q}%
^{p}}\right\}  \right)  _{j}, \label{Young1}%
\end{align}
where
\[
a_{l^{^{\prime}}}=\left\{
\begin{array}
[c]{rclll}%
2^{(s_{1}+s_{2})l^{^{\prime}}}, & \text{if} & \,\,|l^{^{\prime}}|\leq4 &  & \\
0, & \text{if} & \,\,|l^{^{\prime}}|>4 &  &
\end{array}
\right.  .
\]
Note now that
\begin{align*}
2^{s_{1}l}\Vert S_{l-2}u\Vert_{L^{\infty}}  &  =2^{s_{1}l}\left\Vert
\sum_{m\leq l-2}\Delta_{m}u\right\Vert _{L^{\infty}}\\
&  \leq2^{s_{1}l}\sum_{m\leq l-2}\Vert\Delta_{m}u\Vert_{L^{\infty}}\\
&  =\sum_{m\leq l-2}2^{s_{1}(l-m)}2^{s_{1}m}\Vert\Delta_{m}u\Vert_{L^{\infty}%
}\\
&  =\left(  \left\{  b_{l^{^{\prime}}}\right\}  \ast\left\{  2^{s_{1}%
k^{^{\prime}}}\Vert\Delta_{k^{^{\prime}}}u\Vert_{L^{\infty}}\right\}  \right)
_{l},
\end{align*}
where
\[
b_{l^{^{\prime}}}=\left\{
\begin{array}
[c]{rclll}%
2^{s_{1}l^{^{\prime}}}, & \text{if} & \,\,l^{^{\prime}}\geq2 &  & \\
0, & \text{if} & \,\,l^{^{\prime}}<2 &  &
\end{array}
\right.  .
\]
So, since $\left\{  b_{l^{^{\prime}}}\right\}  \in\ell^{1}(\mathbb{Z})$,
because $s_{1}<0$, and $\left\{  2^{s_{1}k^{^{\prime}}}\Vert\Delta
_{k^{^{\prime}}}u\Vert_{L^{\infty}}\right\}  \in\ell^{r_{1}}(\mathbb{Z})$, we
have, as a consequence of Young inequality, that $\left\{  2^{s_{1}l}\Vert
S_{l-2}u\Vert_{L^{\infty}}\right\}  \in\ell^{r_{1}}(\mathbb{Z})$ and
\[
\left\Vert \left\{  2^{s_{1}l}\Vert S_{l-2}u\Vert_{L^{\infty}}\right\}
\right\Vert _{\ell^{r_{1}}(\mathbb{Z})}\leq\left(  \sum_{l^{^{\prime}}\geq
2}2^{s_{1}l^{^{\prime}}}\right)  \left\Vert \left\{  2^{s_{1}l}\Vert\Delta
_{l}u\Vert_{L^{\infty}}\right\}  \right\Vert _{\ell^{r_{1}}(\mathbb{Z})}\leq
C\Vert u\Vert_{\dot{B}_{\infty,r_{1}}^{s_{1}}}.
\]
Now, in view of the fact that $\left\{  2^{s_{2}l}\Vert\Delta_{l}%
v\Vert_{\mathcal{M}_{q}^{p}}\right\}  \in\ell^{r_{2}}(\mathbb{Z})$, we
immediately obtain, by H\"{o}lder inequality in sequence spaces, that
$\left\{  2^{s_{1}l}\Vert S_{l-2}u\Vert_{L^{\infty}}2^{s_{2}l}\Vert\Delta
_{l}v\Vert_{\mathcal{M}_{q}^{p}}\right\}  \in\ell^{r}(\mathbb{Z})$ and
\begin{align}
&  \left\Vert \left\{  2^{s_{1}l}\Vert S_{l-2}u\Vert_{L^{\infty}}2^{s_{2}%
l}\Vert\Delta_{l}v\Vert_{\mathcal{M}_{q}^{p}}\right\}  \right\Vert _{\ell
^{r}(\mathbb{Z})}\nonumber\\
&  \leq\left\Vert \left\{  2^{s_{1}l}\Vert S_{l-2}u\Vert_{L^{\infty}}\right\}
\right\Vert _{\ell^{r_{1}}(\mathbb{Z})}\left\Vert \left\{  2^{s_{2}l}%
\Vert\Delta_{l}v\Vert_{\mathcal{M}_{q}^{p}}\right\}  \right\Vert _{\ell
^{r_{2}}(\mathbb{Z})}\nonumber\\
&  \leq C\Vert u\Vert_{\dot{B}_{\infty,r_{1}}^{s_{1}}}\Vert v\Vert
_{\mathcal{N}_{p,q,r_{2}}^{s_{2}}}. \label{Holder1}%
\end{align}
Therefore , since $\left\{  a_{l^{^{\prime}}}\right\}  \in\ell^{1}%
(\mathbb{Z})$, it follows from Young inequality and estimates (\ref{Young1})
and (\ref{Holder1}) that
\begin{align*}
\Vert T_{u}v\Vert_{\mathcal{N}_{p,q,r}^{s_{1}+s_{2}}}  &  =\left\Vert \left\{
2^{(s_{1}+s_{2})j}\Vert\Delta_{j}(T_{u}v)\Vert_{\mathcal{M}_{q}^{p}}\right\}
\right\Vert _{\ell^{r}(\mathbb{Z})}\\
&  \leq\left(  \sum_{|l^{^{\prime}}|\leq4}2^{(s_{1}+s_{2})l^{^{\prime}}%
}\right)  \left\Vert \left\{  2^{s_{1}l}\Vert S_{l-2}u\Vert_{L^{\infty}%
}2^{s_{2}l}\Vert\Delta_{l}v\Vert_{\mathcal{M}_{q}^{p}}\right\}  \right\Vert
_{\ell^{r}(\mathbb{Z})}\\
&  \leq C\Vert u\Vert_{\dot{B}_{\infty,r_{1}}^{s_{1}}}\Vert v\Vert
_{\mathcal{N}_{p,q,r_{2}}^{s_{2}}},
\end{align*}
which proves the first estimate. Alternatively, Lemmas \ref{Delta-j.Sj.Morrey}
and \ref{Holder-Morrey} give us that%

\begin{align}
2^{(s_{1}+s_{2})j}\Vert\Delta_{j}(T_{u}v)\Vert_{\mathcal{M}_{q}^{p}}  &
=2^{(s_{1}+s_{2})j}\left\Vert \sum_{\vert k-j\vert\leq4}\Delta_{j}%
(S_{k-2}u\Delta_{k}v)\right\Vert _{\mathcal{M}_{q}^{p}}\nonumber\\
&  \leq C2^{(s_{1}+s_{2})j}\sum_{\vert k-j\vert\leq4}\left\Vert S_{k-2}%
u\Delta_{k}v \right\Vert _{\mathcal{M}_{q}^{p}}\nonumber\\
&  \leq C2^{(s_{1}+s_{2})j}\sum_{\vert k-j\vert\leq4}\Vert S_{k-2}%
u\Vert_{\mathcal{M}_{q}^{p}}\Vert\Delta_{k}v\Vert_{L^{\infty}}\nonumber\\
&  = C\sum_{\vert k-j\vert\leq4}2^{(s_{1}+s_{2})(j-k)}2^{s_{1}k}\Vert
S_{k-2}u\Vert_{\mathcal{M}_{q}^{p}}2^{s_{2}k}\Vert\Delta_{k}v\Vert_{L^{\infty
}}\nonumber\\
&  =C\left(  \left\{  a_{l^{^{\prime}}}\right\}  *\left\{  2^{s_{1}l}\Vert
S_{l-2}u\Vert_{\mathcal{M}_{q}^{p}}2^{s_{2}l}\Vert\Delta_{l}v\Vert_{L^{\infty
}}\right\}  \right)  _{j}. \label{Young1-2}%
\end{align}
Thus, an analogous argument to the one made above leads us to conclude that
\[
\left\Vert \left\{  2^{s_{1}l}\Vert S_{l-2}u\Vert_{\mathcal{M}_{q}^{p}%
}\right\}  \right\Vert _{\ell^{r_{1}}(\mathbb{Z})}\leq C\Vert u\Vert
_{\mathcal{N}_{p,q,r_{1}}^{s_{1}}}
\]
and%

\begin{equation}
\left\Vert \left\{  2^{s_{1}l}\Vert S_{l-2}u\Vert_{\mathcal{M}_{q}^{p}%
}2^{s_{2}l}\Vert\Delta_{l}v\Vert_{L^{\infty}}\right\}  \right\Vert _{\ell
^{r}(\mathbb{Z})}\leq C\Vert u\Vert_{\mathcal{N}_{p,q,r_{1}}^{s_{1}}}\Vert
v\Vert_{\dot{B}_{\infty,r_{2}}^{s_{2}}}. \label{Holder1-2}%
\end{equation}
Applying Young inequality to (\ref{Young1-2}) and using estimate
(\ref{Holder1-2}), we then obtain%

\begin{align*}
\Vert T_{u}v\Vert_{\mathcal{N}_{p,q,r}^{s_{1}+s_{2}}}  &  =\left\Vert \left\{
2^{(s_{1}+s_{2})j}\Vert\Delta_{j}(T_{u}v)\Vert_{\mathcal{M}_{q}^{p}}\right\}
\right\Vert _{\ell^{r}(\mathbb{Z})}\\
&  \leq\left(  \sum_{\vert l^{^{\prime}}\vert\leq4}2^{(s_{1}+s_{2}%
)l^{^{\prime}}}\right)  \left\Vert \left\{  2^{s_{1}l}\Vert S_{l-2}%
u\Vert_{\mathcal{M}_{q}^{p}}2^{s_{2}l}\Vert\Delta_{l}v\Vert_{L^{\infty}%
}\right\}  \right\Vert _{\ell^{r}(\mathbb{Z})}\\
&  \leq C\Vert u\Vert_{\mathcal{N}_{p,q,r_{1}}^{s_{1}}}\Vert v\Vert_{\dot
{B}_{\infty,r_{2}}^{s_{2}}},
\end{align*}
and the proof is complete.

\begin{flushright}
\ding{110}
\end{flushright}

\begin{lemma}
\label{R-Binfity-BM-Holder} Let $1\leq q\leq p<\infty,1\leq r,r_{1},r_{2}%
\leq\infty$ with $\displaystyle\frac{1}{r}=\displaystyle\frac{1}{r_{1}%
}+\displaystyle\frac{1}{r_{2}}$ and $s_{1},s_{2}\in\mathbb{R}$ satisfying
$s_{1}+s_{2}>0$. Then, $\exists C=C(n,s_{1},s_{2})>0$ such that
\[
\Vert R(u,v)\Vert_{\mathcal{N}_{p,q,r}^{s_{1}+s_{2}}}\leq C\Vert u\Vert
_{\dot{B}_{\infty,r_{1}}^{s_{1}}}\Vert v\Vert_{\mathcal{N}_{p,q,r_{2}}^{s_{2}%
}},
\]
for all $u\in\dot{B}_{\infty,r_{1}}^{s_{1}}$ and $v\in\mathcal{N}_{p,q,r_{2}%
}^{s_{2}}$. Additionally, the estimate
\[
\Vert R(u,v)\Vert_{\mathcal{N}_{p,q,r}^{s_{1}+s_{2}}}\leq C\Vert
u\Vert_{\mathcal{N}_{p,q,r_{1}}^{s_{1}}}\Vert v\Vert_{\dot{B}_{\infty,r_{2}%
}^{s_{2}}}%
\]
holds true, for all $u\in\mathcal{N}_{p,q,r_{1}}^{s_{1}}$ and $v\in\dot
{B}_{\infty,r_{2}}^{s_{2}}$.
\end{lemma}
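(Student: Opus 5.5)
We follow the same scheme as in the proofs of Lemmas \ref{L-inftyxBM} and \ref{B-inftyxBM-Holder}. The one new feature is that the remainder involves a tail sum over high frequencies. This tail is where the hypothesis $s_{1}+s_{2}>0$ enters.

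By Remark \ref{Bony-properties},
\[
\Delta_{j}R(u,v)=\sum_{k\geq j-2}\Delta_{j}(\Delta_{k}u\tilde{\Delta}_{k}v).
\]
Using Lemma \ref{Delta-j.Sj.Morrey} and then Lemma \ref{Holder-Morrey}(i), with $\Delta_{k}u$ placed in $L^{\infty}$, we obtain
\[
2^{(s_{1}+s_{2})j}\Vert\Delta_{j}R(u,v)\Vert_{\mathcal{M}_{q}^{p}}\leq C\sum_{k\geq j-2}2^{(s_{1}+s_{2})(j-k)}\,2^{s_{1}k}\Vert\Delta_{k}u\Vert_{L^{\infty}}\,2^{s_{2}k}\Vert\tilde{\Delta}_{k}v\Vert_{\mathcal{M}_{q}^{p}}.
\]
We rewrite the right-hand side as a convolution $(\{c_{l}\}\ast\{d_{k}\})_{j}$. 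Here $c_{l}=2^{(s_{1}+s_{2})l}$ for $l\leq2$ and $c_{l}=0$ otherwise, and
\[
d_{k}=2^{s_{1}k}\Vert\Delta_{k}u\Vert_{L^{\infty}}\,2^{s_{2}k}\Vert\tilde{\Delta}_{k}v\Vert_{\mathcal{M}_{q}^{p}}.
\]
The key point, and the only place the hypothesis is used, is that $\{c_{l}\}\in\ell^{1}(\mathbb{Z})$ precisely because $s_{1}+s_{2}>0$. Its norm is $\sum_{l\leq2}2^{(s_{1}+s_{2})l}=C(s_{1},s_{2})$.

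Next we control $d_{k}$ in $\ell^{r}$. Since $\tilde{\Delta}_{k}v=\sum_{|k-l|\leq1}\Delta_{l}v$, we have
\[
2^{s_{2}k}\Vert\tilde{\Delta}_{k}v\Vert_{\mathcal{M}_{q}^{p}}\leq\sum_{|m|\leq1}2^{-s_{2}m}\,2^{s_{2}(k+m)}\Vert\Delta_{k+m}v\Vert_{\mathcal{M}_{q}^{p}}.
\]
The $\ell^{r_{2}}$ norm of this sequence is therefore at most $C\Vert v\Vert_{\mathcal{N}_{p,q,r_{2}}^{s_{2}}}$. Hölder's inequality in sequence spaces, with $\frac{1}{r}=\frac{1}{r_{1}}+\frac{1}{r_{2}}$, then gives
\[
\Vert\{d_{k}\}\Vert_{\ell^{r}}\leq C\Vert u\Vert_{\dot{B}_{\infty,r_{1}}^{s_{1}}}\Vert v\Vert_{\mathcal{N}_{p,q,r_{2}}^{s_{2}}}.
\]
Young's inequality $\ell^{1}\ast\ell^{r}\subset\ell^{r}$ completes the first estimate.

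For the second estimate we repeat the argument with the roles swapped in Lemma \ref{Holder-Morrey}(i). We put $\Delta_{k}u$ in $\mathcal{M}_{q}^{p}$ and $\tilde{\Delta}_{k}v$ in $L^{\infty}$. The bound $\Vert\tilde{\Delta}_{k}v\Vert_{L^{\infty}}\leq\sum_{|k-l|\leq1}\Vert\Delta_{l}v\Vert_{L^{\infty}}$ handles the shifted block in the same way as before.

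The only delicate point is the convergence of the tail sum over $k\geq j-2$, which requires $s_{1}+s_{2}>0$. Everything else is routine bookkeeping of the same type as in the preceding lemmas.
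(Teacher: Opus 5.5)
Your proposal is correct and follows the paper's proof essentially step by step: the same localization $\Delta_j R(u,v)=\sum_{k\geq j-2}\Delta_j(\Delta_k u\,\tilde{\Delta}_k v)$, the same convolution kernel $2^{(s_1+s_2)l}\mathbf{1}_{\{l\leq 2\}}$ (summable exactly because $s_1+s_2>0$), the same three-term convolution bound for $\tilde{\Delta}_k v$, then H\"older in $\ell^{r_1}\times\ell^{r_2}$ and Young's inequality. As in the paper, the second estimate comes from placing $\Delta_k u$ in $\mathcal{M}_q^p$ and $\tilde{\Delta}_k v$ in $L^\infty$ instead.
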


\noindent{\textbf{Proof.}} Note that, by Lemmas \ref{Delta-j.Sj.Morrey} and
\ref{Holder-Morrey}, we have
\begin{align}
2^{\left(  s_{1}+s_{2}\right)  j}\Vert\Delta_{j}(R(u,v))\Vert_{\mathcal{M}%
_{q}^{p}}  &  =2^{\left(  s_{1}+s_{2}\right)  j}\left\Vert \sum_{k\geq
j-2}\Delta_{j}(\Delta_{k}u\tilde{\Delta}_{k}v)\right\Vert _{\mathcal{M}%
_{q}^{p}}\nonumber\\
&  \leq C2^{\left(  s_{1}+s_{2}\right)  j}\sum_{k\geq j-2}\Vert\Delta
_{k}u\tilde{\Delta}_{k}v\Vert_{\mathcal{M}_{q}^{p}}\nonumber\\
&  \leq C2^{\left(  s_{1}+s_{2}\right)  j}\sum_{k\geq j-2}\Vert\Delta
_{k}u\Vert_{L^{\infty}}\Vert\tilde{\Delta}_{k}v\Vert_{\mathcal{M}_{q}^{p}%
}\nonumber\\
&  =C\sum_{k\geq j-2}2^{(s_{1}+s_{2})(j-k)}2^{s_{1}k}\Vert\Delta_{k}%
u\Vert_{L^{\infty}}2^{s_{2}k}\Vert\tilde{\Delta}_{k}v\Vert_{\mathcal{M}%
_{q}^{p}}\nonumber\\
&  =C\left(  \left\{  a_{l^{^{\prime}}}\right\}  \ast\left\{  2^{s_{1}l}%
\Vert\Delta_{l}u\Vert_{L^{\infty}}2^{s_{2}l}\Vert\tilde{\Delta}_{l}%
v\Vert_{\mathcal{M}_{q}^{p}}\right\}  \right)  _{j}, \label{Young2}%
\end{align}
where
\[
a_{l^{^{\prime}}}=\left\{
\begin{array}
[c]{rclll}%
2^{(s_{1}+s_{2})l^{^{\prime}}}, & \text{if} & \,\,l^{^{\prime}}\leq2 &  & \\
0, & \text{if} & \,\,l^{^{\prime}}>2 &  &
\end{array}
\right.  .
\]
Furthermore, we have
\begin{align*}
2^{s_{2}l}\Vert\tilde{\Delta}_{l}v\Vert_{\mathcal{M}_{q}^{p}}  &  =2^{s_{2}%
l}\left\Vert \sum_{|m-l|\leq1}\Delta_{m}v\right\Vert _{\mathcal{M}_{q}^{p}}\\
&  \leq2^{s_{2}l}\sum_{|m-l|\leq1}\Vert\Delta_{m}v\Vert_{\mathcal{M}_{q}^{p}%
}\\
&  =\sum_{|m-l|\leq1}2^{s_{2}(l-m)}2^{s_{2}m}\Vert\Delta_{m}v\Vert
_{\mathcal{M}_{q}^{p}}\\
&  =\left(  \left\{  b_{l^{^{\prime}}}\right\}  \ast\left\{  2^{s_{2}%
k^{^{\prime}}}\Vert\Delta_{k^{^{\prime}}}v\Vert_{\mathcal{M}_{q}^{p}}\right\}
\right)  _{l},
\end{align*}
where
\[
b_{l^{^{\prime}}}=\left\{
\begin{array}
[c]{rclll}%
2^{s_{2}l^{^{\prime}}}, & \text{if} & \,\,|l^{^{\prime}}|\leq1 &  & \\
0, & \text{if} & \,\,|l^{^{\prime}}|>1 &  &
\end{array}
\right.  .
\]
So, making use of Young inequality, we have
\[
\left\Vert \left\{  2^{s_{2}l}\Vert\tilde{\Delta}_{l}v\Vert_{\mathcal{M}%
_{q}^{p}}\right\}  \right\Vert _{\ell^{r_{2}}(\mathbb{Z})}\leq\left(
\sum_{|l^{^{\prime}}|\leq1}2^{s_{2}l^{^{\prime}}}\right)  \left\Vert \left\{
2^{s_{2}l}\Vert\Delta_{l}v\Vert_{\mathcal{M}_{q}^{p}}\right\}  \right\Vert
_{\ell^{r_{2}}(\mathbb{Z})}\leq C\Vert v\Vert_{\mathcal{N}_{p,q,r_{2}}^{s_{2}%
}}.
\]
Consequently, through H\"{o}lder inequality in sequence spaces, we get the
estimate
\begin{align}
&  \left\Vert \left\{  2^{s_{1}l}\Vert\Delta_{l}u\Vert_{L^{\infty}}2^{s_{2}%
l}\Vert\tilde{\Delta}_{l}v\Vert_{\mathcal{M}_{q}^{p}}\right\}  \right\Vert
_{\ell^{r}(\mathbb{Z})}\nonumber\\
&  \leq\left\Vert \left\{  2^{s_{1}l}\Vert\Delta_{l}u\Vert_{L^{\infty}%
}\right\}  \right\Vert _{\ell^{r_{1}}(\mathbb{Z})}\left\Vert \left\{
2^{s_{2}l}\Vert\tilde{\Delta}_{l}v\Vert_{\mathcal{M}_{q}^{p}}\right\}
\right\Vert _{\ell^{r_{2}}(\mathbb{Z})}\nonumber\\
&  \leq C\Vert u\Vert_{\dot{B}_{\infty,r_{1}}^{s_{1}}}\Vert v\Vert
_{\mathcal{N}_{p,q,r_{2}}^{s_{2}}}. \label{Holder2}%
\end{align}
Thus, since $\left\{  a_{l^{^{\prime}}}\right\}  \in\ell^{1}(\mathbb{Z})$,
because we have the hypothesis $s_{1}+s_{2}>0$, we can return to estimate
(\ref{Young2}) and get, as a consequence of Young inequality and estimate
(\ref{Holder2}), that
\begin{align*}
\Vert R(u,v)\Vert_{\mathcal{N}_{p,q,r}^{s_{1}+s_{2}}}  &  =\left\Vert \left\{
2^{(s_{1}+s_{2})j}\Vert\Delta_{j}(R(u,v))\Vert_{\mathcal{M}_{q}^{p}}\right\}
\right\Vert _{\ell^{r}(\mathbb{Z})}\\
&  \leq\left(  \sum_{l^{^{\prime}}\leq2}2^{(s_{1}+s_{2})l^{^{\prime}}}\right)
\left\Vert \left\{  2^{s_{1}l}\Vert\Delta_{l}u\Vert_{L^{\infty}}2^{s_{2}%
l}\Vert\tilde{\Delta}_{l}v\Vert_{\mathcal{M}_{q}^{p}}\right\}  \right\Vert
_{\ell^{r}(\mathbb{Z})}\\
&  \leq C\Vert u\Vert_{\dot{B}_{\infty,r_{1}}^{s_{1}}}\Vert v\Vert
_{\mathcal{N}_{p,q,r_{2}}^{s_{2}}},
\end{align*}
which proves the first part of the result. Now, using Lemmas
\ref{Delta-j.Sj.Morrey} and \ref{Holder-Morrey} again, we can also obtain that%

\begin{align}
2^{\left(  s_{1}+s_{2}\right)  j}\Vert\Delta_{j}(R(u,v))\Vert_{\mathcal{M}%
_{q}^{p}}  &  =2^{\left(  s_{1}+s_{2}\right)  j}\left\Vert \sum_{k\geq
j-2}\Delta_{j}(\Delta_{k}u\tilde{\Delta}_{k}v)\right\Vert _{\mathcal{M}%
_{q}^{p}}\nonumber\\
&  \leq C2^{\left(  s_{1}+s_{2}\right)  j}\sum_{k\geq j-2}\Vert\Delta
_{k}u\tilde{\Delta}_{k}v\Vert_{\mathcal{M}_{q}^{p}}\nonumber\\
&  \leq C2^{\left(  s_{1}+s_{2}\right)  j}\sum_{k\geq j-2}\Vert\Delta
_{k}u\Vert_{\mathcal{M}_{q}^{p}}\Vert\tilde{\Delta}_{k}v\Vert_{L^{\infty}%
}\nonumber\\
&  = C\sum_{k\geq j-2}2^{(s_{1}+s_{2})(j-k)}2^{s_{1}k}\Vert\Delta_{k}%
u\Vert_{\mathcal{M}_{q}^{p}}2^{s_{2}k}\Vert\tilde{\Delta}_{k}v\Vert
_{L^{\infty}}\nonumber\\
&  =C\left(  \left\{  a_{l^{^{\prime}}}\right\}  *\left\{  2^{s_{1}l}%
\Vert\Delta_{l}u\Vert_{\mathcal{M}_{q}^{p}}2^{s_{2}l}\Vert\tilde{\Delta}%
_{l}v\Vert_{L^{\infty}}\right\}  \right)  _{j}. \label{Young2-2}%
\end{align}
And then, proceeding as before, we have
\[
\left\Vert \left\{  2^{s_{2}l}\Vert\tilde{\Delta}_{l}v\Vert_{L^{\infty}%
}\right\}  \right\Vert _{\ell^{r_{2}}(\mathbb{Z})}\leq C\Vert v\Vert_{\dot
{B}_{\infty,r_{2}}^{s_{2}}}
\]
and%

\begin{equation}
\left\Vert \left\{  2^{s_{1}l}\Vert\Delta_{l}u\Vert_{\mathcal{M}_{q}^{p}%
}2^{s_{2}l}\Vert\tilde{\Delta}_{l}v\Vert_{L^{\infty}}\right\}  \right\Vert
_{\ell^{r}(\mathbb{Z})}\leq C\Vert u\Vert_{\mathcal{N}_{p,q,r_{1}}^{s_{1}}%
}\Vert v\Vert_{\dot{B}_{\infty,r_{2}}^{s_{2}}}. \label{Holder2-2}%
\end{equation}
In that case, the combination of Young inequality and estimates
(\ref{Young2-2}) and (\ref{Holder2-2}) gives us that%

\begin{align*}
\Vert R(u,v)\Vert_{\mathcal{N}_{p,q,r}^{s_{1}+s_{2}}}  &  =\left\Vert \left\{
2^{(s_{1}+s_{2})j}\Vert\Delta_{j}(R(u,v))\Vert_{\mathcal{M}_{q}^{p}}\right\}
\right\Vert _{\ell^{r}(\mathbb{Z})}\\
&  \leq\left(  \sum_{l^{^{\prime}}\leq2}2^{(s_{1}+s_{2})l^{^{\prime}}}\right)
\left\Vert \left\{  2^{s_{1}l}\Vert\Delta_{l}u\Vert_{\mathcal{M}_{q}^{p}%
}2^{s_{2}l}\Vert\tilde{\Delta}_{l}v\Vert_{L^{\infty}}\right\}  \right\Vert
_{\ell^{r}(\mathbb{Z})}\\
&  \leq C\Vert u\Vert_{\mathcal{N}_{p,q,r_{1}}^{s_{1}}}\Vert v\Vert_{\dot
{B}_{\infty,r_{2}}^{s_{2}}},
\end{align*}
thereby concluding the result.

\begin{flushright}
\ding{110}
\end{flushright}

\begin{lemma}
\label{R-BM-BM-Holder} Let $1\leq q\leq p<\infty$ and $1\leq q_{i}\leq
p_{i}<\infty$, $i=1,2$, such that $\displaystyle\frac{1}{p}=\displaystyle\frac
{1}{p_{1}}+\displaystyle\frac{1}{p_{2}}$ and $\displaystyle\frac{1}{q}%
\geq\displaystyle\frac{1}{q_{1}}+\displaystyle\frac{1}{q_{2}}$. Furthermore,
let $1\leq r,r_{1},r_{2}\leq\infty$ with $\displaystyle\frac{1}{r}%
=\displaystyle\frac{1}{r_{1}}+\displaystyle\frac{1}{r_{2}}$ and $s_{1}%
,s_{2}\in\mathbb{R}$ satisfying $s_{1}+s_{2}>0$. Then, there exists a positive
constant $C=C(n,s_{1}, s_{2})$ such that
\[
\Vert R(u,v)\Vert_{\mathcal{N}_{p,q,r}^{s_{1}+s_{2}}}\leq C\Vert
u\Vert_{\mathcal{N}_{p_{1},q_{1},r_{1}}^{s_{1}}}\Vert v\Vert_{\mathcal{N}%
_{p_{2},q_{2},r_{2}}^{s_{2}}},
\]
for all $u\in\mathcal{N}_{p_{1},q_{1},r_{1}}^{s_{1}}$ and $v\in\mathcal{N}%
_{p_{2},q_{2},r_{2}}^{s_{2}}$.
\end{lemma}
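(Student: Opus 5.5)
The plan is to follow the argument of Lemma \ref{R-Binfity-BM-Holder} exactly. The only change is that the $L^{\infty}$--Morrey H\"{o}lder step is replaced by the Morrey--Morrey H\"{o}lder inequality of Lemma \ref{Holder-Morrey}(ii).

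First, Remark \ref{Bony-properties} gives $\Delta_{j}(R(u,v))=\sum_{k\geq j-2}\Delta_{j}(\Delta_{k}u\tilde{\Delta}_{k}v)$. Lemma \ref{Delta-j.Sj.Morrey} in $\mathcal{M}_{q}^{p}$ then yields
\[
2^{(s_{1}+s_{2})j}\Vert\Delta_{j}(R(u,v))\Vert_{\mathcal{M}_{q}^{p}}\leq C\sum_{k\geq j-2}2^{(s_{1}+s_{2})j}\Vert\Delta_{k}u\tilde{\Delta}_{k}v\Vert_{\mathcal{M}_{q}^{p}}.
\]
Next, the hypotheses $\frac{1}{p}=\frac{1}{p_{1}}+\frac{1}{p_{2}}$ and $\frac{1}{q}\geq\frac{1}{q_{1}}+\frac{1}{q_{2}}$ are exactly those of Lemma \ref{Holder-Morrey}(ii). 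Hence
$\Vert\Delta_{k}u\tilde{\Delta}_{k}v\Vert_{\mathcal{M}_{q}^{p}}\leq\Vert\Delta_{k}u\Vert_{\mathcal{M}_{q_{1}}^{p_{1}}}\Vert\tilde{\Delta}_{k}v\Vert_{\mathcal{M}_{q_{2}}^{p_{2}}}$.
Writing $2^{(s_{1}+s_{2})j}=2^{(s_{1}+s_{2})(j-k)}2^{s_{1}k}2^{s_{2}k}$, the right-hand side becomes the convolution $(\{a_{l'}\}\ast\{2^{s_{1}l}\Vert\Delta_{l}u\Vert_{\mathcal{M}_{q_{1}}^{p_{1}}}2^{s_{2}l}\Vert\tilde{\Delta}_{l}v\Vert_{\mathcal{M}_{q_{2}}^{p_{2}}}\})_{j}$. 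Here $a_{l'}=2^{(s_{1}+s_{2})l'}$ for $l'\leq2$ and $a_{l'}=0$ otherwise.

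The key point is that $s_{1}+s_{2}>0$ makes $\{a_{l'}\}\in\ell^{1}(\mathbb{Z})$. This is the only place the sign condition enters, and it is what allows the infinite sum over $k\geq j-2$ to be controlled.

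It remains to bound the inner sequence in $\ell^{r}$. The sequence $\{2^{s_{2}l}\Vert\tilde{\Delta}_{l}v\Vert_{\mathcal{M}_{q_{2}}^{p_{2}}}\}$ is the convolution of $\{2^{s_{2}m}\Vert\Delta_{m}v\Vert_{\mathcal{M}_{q_{2}}^{p_{2}}}\}$ with the finitely supported kernel $b_{l'}=2^{s_{2}l'}\mathbf{1}_{|l'|\leq1}$. By Young's inequality its $\ell^{r_{2}}$ norm is therefore at most $C\Vert v\Vert_{\mathcal{N}_{p_{2},q_{2},r_{2}}^{s_{2}}}$. The sequence $\{2^{s_{1}l}\Vert\Delta_{l}u\Vert_{\mathcal{M}_{q_{1}}^{p_{1}}}\}$ has $\ell^{r_{1}}$ norm equal to $\Vert u\Vert_{\mathcal{N}_{p_{1},q_{1},r_{1}}^{s_{1}}}$.

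H\"{o}lder's inequality in sequence spaces, with $\frac{1}{r}=\frac{1}{r_{1}}+\frac{1}{r_{2}}$, bounds the product sequence in $\ell^{r}$. Young's inequality with $\{a_{l'}\}\in\ell^{1}$ then gives the claimed estimate, with constant $C$ depending only on $n,s_{1},s_{2}$ through $\sum_{l'\leq2}2^{(s_{1}+s_{2})l'}$, $\sum_{|l'|\leq1}2^{s_{2}l'}$ and the constant in Lemma \ref{Delta-j.Sj.Morrey}.

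I do not expect any step to be a real obstacle. The one check needed is that the $\Delta_{j}$ bound of Lemma \ref{Delta-j.Sj.Morrey} is used only in the target space $\mathcal{M}_{q}^{p}$, before the H\"{o}lder splitting. After the splitting, only the triangle inequality is needed for $\tilde{\Delta}_{l}v$ in $\mathcal{M}_{q_{2}}^{p_{2}}$. The product $\Delta_{k}u\tilde{\Delta}_{k}v$ lies in $L^{q}_{loc}$ by Lemma \ref{Holder-Morrey}(ii), so every norm above is finite and meaningful.
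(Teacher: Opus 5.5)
Your proposal is correct and follows the paper's proof essentially step for step. It uses the same localization $\Delta_j R(u,v)=\sum_{k\geq j-2}\Delta_j(\Delta_k u\tilde{\Delta}_k v)$, the Morrey--Morrey H\"{o}lder inequality of Lemma \ref{Holder-Morrey}(ii), the kernel $a_{l'}=2^{(s_1+s_2)l'}$ supported on $l'\leq 2$ (summable because $s_1+s_2>0$), the finitely supported kernel that controls $\tilde{\Delta}_l v$ as in Lemma \ref{R-Binfity-BM-Holder}, and the closing H\"{o}lder/Young combination in sequence spaces.
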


\noindent{\textbf{Proof.}} Indeed, applying Lemmas \ref{Delta-j.Sj.Morrey} and
\ref{Holder-Morrey}, we have that
\begin{align}
2^{\left(  s_{1}+s_{2}\right)  j}\Vert\Delta_{j}(R(u,v))\Vert_{\mathcal{M}%
_{q}^{p}}  &  =2^{\left(  s_{1}+s_{2}\right)  j}\left\Vert \sum_{k\geq
j-2}\Delta_{j}(\Delta_{k}u\tilde{\Delta}_{k}v)\right\Vert _{\mathcal{M}%
_{q}^{p}}\nonumber\\
&  \leq C2^{\left(  s_{1}+s_{2}\right)  j}\sum_{k\geq j-2}\Vert\Delta
_{k}u\tilde{\Delta}_{k}v\Vert_{\mathcal{M}_{q}^{p}}\nonumber\\
&  \leq C2^{\left(  s_{1}+s_{2}\right)  j}\sum_{k\geq j-2}\Vert\Delta
_{k}u\Vert_{\mathcal{M}_{q_{1}}^{p_{1}}}\Vert\tilde{\Delta}_{k}v\Vert
_{\mathcal{M}_{q_{2}}^{p_{2}}}\nonumber\\
&  =C\sum_{k\geq j-2}2^{(s_{1}+s_{2})(j-k)}2^{s_{1}k}\Vert\Delta_{k}%
u\Vert_{\mathcal{M}_{q_{1}}^{p_{1}}}2^{s_{2}k}\Vert\tilde{\Delta}_{k}%
v\Vert_{\mathcal{M}_{q_{2}}^{p_{2}}}\nonumber\\
&  =C\left(  \left\{  a_{l^{^{\prime}}}\right\}  \ast\left\{  2^{s_{1}l}%
\Vert\Delta_{l}u\Vert_{\mathcal{M}_{q_{1}}^{p_{1}}}2^{s_{2}l}\Vert
\tilde{\Delta}_{l}v\Vert_{\mathcal{M}_{q_{2}}^{p_{2}}}\right\}  \right)  _{j},
\label{Young3}%
\end{align}
where
\[
a_{l^{^{\prime}}}=\left\{
\begin{array}
[c]{rclll}%
2^{(s_{1}+s_{2})l^{^{\prime}}}, & \text{if} & \,\,l^{^{\prime}}\leq2 &  & \\
0, & \text{if} & \,\,l^{^{\prime}}>2 &  &
\end{array}
\right.  .
\]
Now, in the same way as in Lemma \ref{R-Binfity-BM-Holder}, we have that
\[
\left\Vert \left\{  2^{s_{2}l}\Vert\tilde{\Delta}_{l}v\Vert_{\mathcal{M}%
_{q_{2}}^{p_{2}}}\right\}  \right\Vert _{\ell^{r_{2}}(\mathbb{Z})}\leq C\Vert
v\Vert_{\mathcal{N}_{p_{2},q_{2},r_{2}}^{s_{2}}}.
\]
Then, H\"{o}lder inequality in sequence spaces provides us with the estimate
\begin{align}
&  \left\Vert \left\{  2^{s_{1}l}\Vert\Delta_{l}u\Vert_{\mathcal{M}_{q_{1}%
}^{p_{1}}}2^{s_{2}l}\Vert\tilde{\Delta}_{l}v\Vert_{\mathcal{M}_{q_{2}}^{p_{2}%
}}\right\}  \right\Vert _{\ell^{r}(\mathbb{Z})}\nonumber\\
&  \leq\left\Vert \left\{  2^{s_{1}l}\Vert\Delta_{l}u\Vert_{\mathcal{M}%
_{q_{1}}^{p_{1}}}\right\}  \right\Vert _{\ell^{r_{1}}(\mathbb{Z})}\left\Vert
\left\{  2^{s_{2}l}\Vert\tilde{\Delta}_{l}v\Vert_{\mathcal{M}_{q_{2}}^{p_{2}}%
}\right\}  \right\Vert _{\ell^{r_{2}}(\mathbb{Z})}\nonumber\\
&  \leq C\Vert u\Vert_{\mathcal{N}_{p_{1},q_{1},r_{1}}^{s_{1}}}\Vert
v\Vert_{\mathcal{N}_{p_{2},q_{2},r_{2}}^{s_{2}}}. \label{Holder3}%
\end{align}
Thus, since the hypothesis $s_{1}+s_{2}>0$ implies $\left\{  a_{l^{^{\prime}}%
}\right\}  \in\ell^{1}(\mathbb{Z})$, we can apply Young inequality to estimate
(\ref{Young3}) and obtain, with the aid of estimate (\ref{Holder3}), that
\begin{align*}
\Vert R(u,v)\Vert_{\mathcal{N}_{p,q,r}^{s_{1}+s_{2}}}  &  =\left\Vert \left\{
2^{(s_{1}+s_{2})j}\Vert\Delta_{j}(R(u,v))\Vert_{\mathcal{M}_{q}^{p}}\right\}
\right\Vert _{\ell^{r}(\mathbb{Z})}\\
&  \leq\left(  \sum_{l^{^{\prime}}\leq2}2^{(s_{1}+s_{2})l^{^{\prime}}}\right)
\left\Vert \left\{  2^{s_{1}l}\Vert\Delta_{l}u\Vert_{\mathcal{M}_{q_{1}%
}^{p_{1}}}2^{s_{2}l}\Vert\tilde{\Delta}_{l}v\Vert_{\mathcal{M}_{q_{2}}^{p_{2}%
}}\right\}  \right\Vert _{\ell^{r}(\mathbb{Z})}\\
&  \leq C\Vert u\Vert_{\mathcal{N}_{p_{1},q_{1},r_{1}}^{s_{1}}}\Vert
v\Vert_{\mathcal{N}_{p_{2},q_{2},r_{2}}^{s_{2}}},
\end{align*}
from which the result follows.

\begin{flushright}
\ding{110}
\end{flushright}

Having these results in hand, we are now ready to prove Proposition
\ref{Product:3/p-1} and Proposition \ref{Product:3/p-2}.

\noindent{\textbf{Proof of Proposition \ref{Product:3/p-1}.}} Indeed, Lemma
\ref{B-inftyxBM-Holder} and the continuous embedding $\mathcal{N}%
_{p,q,r}^{\frac{n}{p}-1}\hookrightarrow\dot{B}_{\infty,r}^{-1}$ (Lemma
\ref{embeddings}) imply that
\begin{equation}
\Vert T_{u}v\Vert_{\mathcal{N}_{p,q,r}^{\frac{n}{p}-1}}\leq C\Vert
u\Vert_{\dot{B}_{\infty,r}^{-1}}\Vert v\Vert_{\mathcal{N}_{p,q,\infty}%
^{\frac{n}{p}}}\leq C\Vert u\Vert_{\mathcal{N}_{p,q,r}^{\frac{n}{p}-1}}\Vert
v\Vert_{\mathcal{N}_{p,q,\infty}^{\frac{n}{p}}}. \label{Prod.1}%
\end{equation}
Also, Lemma \ref{L-inftyxBM} gives us that
\begin{equation}
\Vert T_{v}u\Vert_{\mathcal{N}_{p,q,r}^{\frac{n}{p}-1}}\leq C\Vert
v\Vert_{L^{\infty}}\Vert u\Vert_{\mathcal{N}_{p,q,r}^{\frac{n}{p}-1}}.
\label{Prod.2}%
\end{equation}
And by Lemma \ref{R-Binfity-BM-Holder} and the continuous embedding
$\mathcal{N}_{p,q,\infty}^{\frac{n}{p}}\hookrightarrow\dot{B}_{\infty,\infty
}^{0}$ (Lemma \ref{embeddings}), we have
\begin{equation}
\Vert R(u,v)\Vert_{\mathcal{N}_{p,q,r}^{\frac{n}{p}-1}}\leq C\Vert
u\Vert_{\mathcal{N}_{p,q,r}^{\frac{n}{p}-1}}\Vert v\Vert_{\dot{B}%
_{\infty,\infty}^{0}}\leq C\Vert u\Vert_{\mathcal{N}_{p,q,r}^{\frac{n}{p}-1}%
}\Vert v\Vert_{\mathcal{N}_{p,q,\infty}^{\frac{n}{p}}}. \label{Prod.3}%
\end{equation}
Therefore, the first estimate in the statement follows from Bony's
decomposition (\ref{Bony}) and estimates (\ref{Prod.1}), (\ref{Prod.2}) and
(\ref{Prod.3}). In turn, the second one is an immediate consequence of the
first estimate and the continuous embedding $\mathcal{N}_{p,q,1}^{\frac{n}{p}%
}\hookrightarrow\mathcal{N}_{p,q,\infty}^{\frac{n}{p}}\cap L^{\infty}$ (see
Remark \ref{embedding-in-Linfty}).

\begin{flushright}
\ding{110}
\end{flushright}

\noindent{\textbf{Proof of Proposition \ref{Product:3/p-2}.}} First, note that
Lemma \ref{B-inftyxBM-Holder} together with the continuous embeddings
$\mathcal{N}_{p,q,r_{1}}^{\frac{n}{p}-1}\hookrightarrow\dot{B}_{\infty,r_{1}%
}^{-1}$ and $\mathcal{N}_{p,q,r_{2}}^{\frac{n}{p}-1}\hookrightarrow\dot
{B}_{\infty,r_{2}}^{-1}$ (see Lemma \ref{embeddings}) provides us with the
estimates
\begin{equation}
\Vert T_{u}v\Vert_{\mathcal{N}_{p,q,r}^{\frac{n}{p}-2}}\leq C\Vert
u\Vert_{\dot{B}_{\infty,r_{1}}^{-1}}\Vert v\Vert_{\mathcal{N}_{p,q,r_{2}%
}^{\frac{n}{p}-1}}\leq C\Vert u\Vert_{\mathcal{N}_{p,q,r_{1}}^{\frac{n}{p}-1}%
}\Vert v\Vert_{\mathcal{N}_{p,q,r_{2}}^{\frac{n}{p}-1}} \label{Prod.4}%
\end{equation}
and
\begin{equation}
\Vert T_{v}u\Vert_{\mathcal{N}_{p,q,r}^{\frac{n}{p}-2}}\leq C\Vert
v\Vert_{\dot{B}_{\infty,r_{2}}^{-1}}\Vert u\Vert_{\mathcal{N}_{p,q,r_{1}%
}^{\frac{n}{p}-1}}\leq C\Vert v\Vert_{\mathcal{N}_{p,q,r_{2}}^{\frac{n}{p}-1}%
}\Vert u\Vert_{\mathcal{N}_{p,q,r_{1}}^{\frac{n}{p}-1}}. \label{Prod.5}%
\end{equation}
Now, if $(i)$ holds, we can apply Lemma \ref{R-Binfity-BM-Holder} and use the
same embedding above to get the estimate%

\begin{equation}
\Vert R(u,v)\Vert_{\mathcal{N}_{p,q,r}^{\frac{n}{p}-2}}\leq C\Vert
u\Vert_{\dot{B}_{\infty,r_{1}}^{-1}}\Vert v\Vert_{\mathcal{N}_{p,q,r_{2}%
}^{\frac{n}{p}-1}}\leq C\Vert u\Vert_{\mathcal{N}_{p,q,r_{1}}^{\frac{n}{p}-1}%
}\Vert v\Vert_{\mathcal{N}_{p,q,r_{2}}^{\frac{n}{p}-1}}. \label{Prod.6}%
\end{equation}
But, if $(ii)$ holds, consider
\[
p^{\ast}:=\frac{2np}{3p+n}%
\]
and take $p^{^{\prime}}$ such that
\[
\frac{1}{p^{\ast}}=\frac{1}{p}+\frac{1}{p^{^{\prime}}},\quad\text{i.e.,}\quad
p^{^{\prime}}=\frac{2np}{3p-n}.
\]
Note that our hypotheses imply that $p^{\ast}\in\left[  \frac{2n}{5},\frac
{n}{2}\right)  ,\frac{2n}{5}>1$ and $p^{^{\prime}}\in(n,2n]$. Furthermore,
define
\[
q^{\ast}:=\frac{qp^{\ast}}{p}=\frac{2nq}{3p+n}%
\]
and take $q^{^{\prime}}$ such that
\[
\frac{1}{q^{\ast}}=\frac{1}{q}+\frac{1}{q^{^{\prime}}},\quad\text{i.e.,}\quad
q^{^{\prime}}=\frac{2nq}{3p-n}.
\]
We point out that $q^{^{\prime}}>q^{\ast}\geq1$, since $q\geq\frac{3p+n}{2n}$,
and $q\leq p$ implies that $q^{\ast}\leq p^{\ast}$ and $q^{^{\prime}}\leq
p^{^{\prime}}$. Having made these preparations, note that by using Lemma
\ref{embeddings} with $\theta=\frac{p^{\ast}}{p}$, we obtain the continuous
embedding
\[
\mathcal{N}_{p^{\ast},q^{\ast},r}^{\frac{n}{p^{\ast}}-2}\hookrightarrow
\mathcal{N}_{p,q,r}^{\frac{n}{p}-2}.
\]
So, there exists a constant $C>0$ such that
\begin{equation}
\Vert R(u,v)\Vert_{\mathcal{N}_{p,q,r}^{\frac{n}{p}-2}}\leq C\Vert
R(u,v)\Vert_{\mathcal{N}_{p^{\ast},q^{\ast},r}^{\frac{n}{p^{\ast}}-2}}.
\label{embedding1}%
\end{equation}
Then, as $\frac{2n}{5}\leq p^{\ast}<\frac{n}{2}$, we can make use of Lemma
\ref{R-BM-BM-Holder} to obtain the estimate
\begin{equation}
\Vert R(u,v)\Vert_{\mathcal{N}_{p^{\ast},q^{\ast},r}^{\frac{n}{p^{\ast}}-2}%
}\leq C\Vert u\Vert_{\mathcal{N}_{p,q,r_{1}}^{\frac{n}{p}-1}}\Vert
v\Vert_{\mathcal{N}_{p^{^{\prime}},q^{^{\prime}},r_{2}}^{\frac{n}{p^{^{\prime
}}}-1}}. \label{Prod.7}%
\end{equation}
Now, applying Lemma \ref{embeddings} with $\theta=\frac{p}{p^{^{\prime}}}$ and
drawing attention to the fact that $\frac{qp^{^{\prime}}}{p}=q^{^{\prime}}$,
we have the continuous embedding
\[
\mathcal{N}_{p,q,r_{2}}^{\frac{n}{p}-1}\hookrightarrow\mathcal{N}%
_{p^{^{\prime}},q^{^{\prime}},r_{2}}^{\frac{n}{p^{^{\prime}}}-1},
\]
which results in the estimate
\begin{equation}
\Vert v\Vert_{\mathcal{N}_{p^{^{\prime}},q^{^{\prime}},r_{2}}^{\frac
{n}{p^{^{\prime}}}-1}}\leq C\Vert v\Vert_{\mathcal{N}_{p,q,r_{2}}^{\frac{n}%
{p}-1}}. \label{embedding2}%
\end{equation}
Therefore, combining estimates (\ref{embedding1}), (\ref{Prod.7}) and
(\ref{embedding2}), we have
\begin{equation}
\Vert R(u,v)\Vert_{\mathcal{N}_{p,q,r}^{\frac{n}{p}-2}}\leq C\Vert
R(u,v)\Vert_{\mathcal{N}_{p^{\ast},q^{\ast},r}^{\frac{n}{p^{\ast}}-2}}\leq
C\Vert u\Vert_{\mathcal{N}_{p,q,r_{1}}^{\frac{n}{p}-1}}\Vert v\Vert
_{\mathcal{N}_{p^{^{\prime}},q^{^{\prime}},r_{2}}^{\frac{n}{p^{^{\prime}}}-1}%
}\leq C\Vert u\Vert_{\mathcal{N}_{p,q,r_{1}}^{\frac{n}{p}-1}}\Vert
v\Vert_{\mathcal{N}_{p,q,r_{2}}^{\frac{n}{p}-1}}. \label{Prod.8}%
\end{equation}
Thus, the result follows from Bony's decomposition (\ref{Bony}) and estimates
(\ref{Prod.4}), (\ref{Prod.5}), (\ref{Prod.6}) and (\ref{Prod.8}).

\begin{flushright}
\ding{110}
\end{flushright}

\section{Well-posedness of the stationary Hall-MHD system}

\label{Sec4}This section centers around the proof of Theorem \ref{Result1}, so
all the developed content and results used will be within three-dimensional
setting (i.e., for $n=3$). The core idea is to prove the well-posedness of the
extended system (\ref{Ext.HMHD1})-(\ref{Ext.HMHD4}) in suitable spaces and, as
will be detailed later, to derive from that the well-posedness of the
stationary Hall-MHD system (\ref{MHD}) from $Y$ to $X$ (see (\ref{X-space}%
)-(\ref{Y-space})). To effectively formulate the extended system and make it
amenable to Lemma \ref{auxiliar1}, we must employ certain vector identities.
In this sense, let us recall that, for any pair of divergence-free vector
fields $v,w\in\mathbb{R}^{3}$, we have%

\begin{equation}
(w\cdot\nabla)v=\dv(v\otimes w) \label{v-grad-w}%
\end{equation}
and%
\begin{equation}
(\nabla\times w)\times w=(w\cdot\nabla)w-\nabla\left(  \frac{|w|^{2}}%
{2}\right)  . \label{rotwxw}%
\end{equation}
So, denoting $\varphi:=\phi+\displaystyle\frac{|B|^{2}}{2}$, we have by
identities (\ref{v-grad-w}) and (\ref{rotwxw}) that equation (\ref{Ext.HMHD1})
recasts in
\begin{equation}
-\mu\Delta u+\nabla\varphi+\dv(u\otimes u)-\dv(B\otimes B)=f_{1}.
\label{use-leray}%
\end{equation}
Furthermore, the identity
\begin{equation}
\nabla\times(w\times v)=(v\cdot\nabla)w-(w\cdot\nabla)v, \label{gradx(wxv)-V1}%
\end{equation}
coupled with (\ref{v-grad-w}), results in
\begin{equation}
\nabla\times(w\times v)=\dv(w\otimes v)-\dv(v\otimes w). \label{gradx(wxv)-V2}%
\end{equation}
Then, employing the identity (\ref{gradx(wxv)-V2}) in the context of equations
(\ref{Ext.HMHD2}) and (\ref{Ext.HMHD3}), projecting equation (\ref{use-leray})
by means of the Leray projector $\mathcal{P}$, and taking $f_{3}:=\nabla\times
f_{2}$, the extended stationary Hall-MHD system (\ref{Ext.HMHD1}%
)-(\ref{Ext.HMHD4}) rewrites in
\begin{align}
-\mu\Delta u  &  =\mathcal{P}f_{1}+\beta_{1}(B,B)-\beta_{1}(u,u),\label{MHDE1}%
\\
-\nu\Delta B  &  =\mathcal{P}f_{2}+\beta_{2}(B,hJ-u),\label{MHDE2}\\
-\nu\Delta J  &  =\mathcal{P}f_{3}+\nabla\times\beta_{2}(curl^{-1}J,hJ-u),
\label{MHDE3}%
\end{align}
where
\[
\beta_{1}(v,w)=\frac{1}{2}\mathcal{P}(\dv(v\otimes w)+\dv(w\otimes v))
\]
and
\[
\beta_{2}(v,w)=\dv(v\otimes w)-\dv(w\otimes v).
\]
Setting $\Theta:=(u,B,J)$ and $\digamma:=(f_{1},f_{2},f_{3})$, system
(\ref{MHDE1})-(\ref{MHDE3}) can be written as
\begin{equation}
\Theta=\mathcal{L}\digamma+\mathcal{B}(\Theta,\Theta), \label{System1}%
\end{equation}
where $\mathcal{L}\digamma=%
\begin{pmatrix}
(-\mu\Delta)^{-1}\mathcal{P}f_{1}\\
(-\nu\Delta)^{-1}\mathcal{P}f_{2}\\
(-\nu\Delta)^{-1}\mathcal{P}f_{3}%
\end{pmatrix}
$ and $\mathcal{B}:\mathbb{R}^{3}\times\mathbb{R}^{3}\longrightarrow
\mathbb{R}^{9}$ is defined as
\[
\mathcal{B}(\Phi,\Psi)=%
\begin{pmatrix}
(-\mu\Delta)^{-1}(\beta_{1}(\Phi_{2},\Psi_{2})-\beta_{1}(\Phi_{1},\Psi_{1}))\\
(-\nu\Delta)^{-1}\beta_{2}(\Phi_{2},h\Psi_{3}-\Psi_{1})\\
(-\nu\Delta)^{-1}\nabla\times\beta_{2}(curl^{-1}\Phi_{3},h\Psi_{3}-\Psi_{1})
\end{pmatrix}
,
\]
in which $\Phi=(\Phi_{1},\Phi_{2},\Phi_{3})$ and $\Psi=(\Psi_{1},\Psi_{2}%
,\Psi_{3})$.\newline

Opting for the extended system instead of the initial one provides a benefit
due to its semi-linear nature, which stands in contrast with the
quasi-linearity seen in the Hall-MHD system for $(u,B)$. While the quadratic
terms in equations (\ref{MHDE1}) and (\ref{MHDE2}) bear a strong resemblance
to those found in the incompressible Navier-Stokes equations, the presence of
the Hall term in equation (\ref{MHDE3}) requires moving away from conventional
Navier-Stokes theory. This shift is necessary because, in this scenario,
differentiation occurs outside the first variable of $\beta_{2}$, rather than
within it.

Also, note that, under the assumptions of Theorem \ref{Result1}, if system
(\ref{MHDE1})-(\ref{MHDE3}) is well-posed from
\[
\tilde{Y}=\left\{  (f_{1},f_{2},f_{3})\in\mathcal{N}_{p,q,1}^{\frac{3}{p}%
-3}\times\mathcal{N}_{p,q,r}^{\frac{3}{p}-3}\times\mathcal{N}_{p,q,1}%
^{\frac{3}{p}-3} : \dv f_{2}=\dv f_{3}=0\text{ in}\,\,\mathcal{S}^{^{\prime}%
}(\mathbb{R}^{3})\right\}  ,
\]
endowed with the norm
\[
\Vert(f_{1},f_{2},f_{3})\Vert_{\tilde{Y}}=\Vert f_{1}\Vert_{\mathcal{N}%
_{p,q,1}^{\frac{3}{p}-3}}+\Vert f_{2}\Vert_{\mathcal{N}_{p,q,r}^{\frac{3}%
{p}-3}}+\Vert f_{3}\Vert_{\mathcal{N}_{p,q,1}^{\frac{3}{p}-3}},
\]
to
\[
\tilde{X}=\left\{  (u,B,J)\in\mathcal{N}_{p,q,1}^{\frac{3}{p}-1}%
\times\mathcal{N}_{p,q,r}^{\frac{3}{p}-1}\times\mathcal{N}_{p,q,1}^{\frac
{3}{p}-1}: \dv u=\dv B=\dv J=0\text{ in}\,\,\mathcal{S}^{^{\prime}}%
(\mathbb{R}^{3})\right\}  ,
\]
endowed with the norm
\[
\Vert(u,B,J)\Vert_{\tilde{X}}=\Vert u\Vert_{\mathcal{N}_{p,q,1}^{\frac{3}%
{p}-1}}+\Vert B\Vert_{\mathcal{N}_{p,q,r}^{\frac{3}{p}-1}}+\Vert
J\Vert_{\mathcal{N}_{p,q,1}^{\frac{3}{p}-1}},
\]
then, system (\ref{MHD}) is well-posed from $Y$ to $X$. Indeed, if $(u,B,J)$
is solution of (\ref{MHDE1})-(\ref{MHDE3}) and $f_{3}=\nabla\times f_{2}$, we
have that
\[
\nabla\times B-J=(-\nu\Delta)^{-1}\nabla\times\beta_{2}(curl^{-1}(\nabla\times
B - J),hJ-u).
\]
Therefore, Lemma \ref{Mikhlin-Hormander} and Proposition \ref{Product:3/p-1}
imply that
\begin{align*}
\Vert\nabla\times B-J\Vert_{\mathcal{N}_{p,q,1}^{\frac{3}{p}-1}}  &  =
\Vert(-\nu\Delta)^{-1}\nabla\times\beta_{2}(curl^{-1}(\nabla\times B -
J),hJ-u)\Vert_{\mathcal{N}_{p,q,1}^{\frac{3}{p}-1}}\\
&  \leq C \Vert\beta_{2}(curl^{-1}(\nabla\times B - J),hJ-u)\Vert
_{\mathcal{N}_{p,q,1}^{\frac{3}{p}-2}}\\
&  \leq C\Vert(curl^{-1}(\nabla\times B - J)\otimes(hJ-u)) -((hJ-u)\otimes
curl^{-1}(\nabla\times B - J))\Vert_{\mathcal{N}_{p,q,1}^{\frac{3}{p}-1}}\\
&  \leq C\Vert hJ-u\Vert_{\mathcal{N}_{p,q,1}^{\frac{3}{p}-1}}\Vert
curl^{-1}(\nabla\times B - J)\Vert_{\mathcal{N}_{p,q,1}^{\frac{3}{p}}}\\
&  \leq C\Vert(u,B,J)\Vert_{\tilde{X}}\Vert\nabla\times B - J\Vert
_{\mathcal{N}_{p,q,1}^{\frac{3}{p}-1}}.
\end{align*}
Thus, for $\Vert(f_{1},f_{2},\nabla\times f_{2})\Vert_{\tilde{Y}}$ small
enough (and consequently $\Vert(u,B,J)\Vert_{\tilde{X}}$), we obtain from the
above estimate that $J=\nabla\times B$, meaning that $(u,B)$ is a solution of
(\ref{MHD}).

In that case, we now carry out the proof of Theorem \ref{Result1}, which,
given the argument established above, amounts to proving the well-posedness of
system (\ref{MHDE1})-(\ref{MHDE3}) from $\tilde{Y}$ to $\tilde{X}$.

\vspace{0,5cm}

\noindent{\textbf{Proof of Theorem \ref{Result1}.}} Let $1\leq q\leq
p<\infty,1\leq r\leq2$ and assume that
\[
p<\frac{3}{2}\quad\text{or}\quad\frac{3}{2}\leq p<3\quad\text{and}\quad
q\geq\frac{p+1}{2}.
\]
By Lemma \ref{Mikhlin-Hormander}, we have that
\begin{align}
\Vert\mathcal{L}\digamma\Vert_{\tilde{X}}  &  =\Vert(-\mu\Delta)^{-1}%
\mathcal{P}f_{1}\Vert_{\mathcal{N}_{p,q,1}^{\frac{3}{p}-1}}+\Vert(-\nu
\Delta)^{-1}\mathcal{P}f_{2}\Vert_{\mathcal{N}_{p,q,r}^{\frac{3}{p}-1}}%
+\Vert(-\nu\Delta)^{-1}\mathcal{P}f_{3}\Vert_{\mathcal{N}_{p,q,1}^{\frac{3}%
{p}-1}}\nonumber\\
&  \leq C\left\{  \Vert f_{1}\Vert_{\mathcal{N}_{p,q,1}^{\frac{3}{p}-3}}+\Vert
f_{2}\Vert_{\mathcal{N}_{p,q,r}^{\frac{3}{p}-3}}+\Vert f_{3}\Vert
_{\mathcal{N}_{p,q,1}^{\frac{3}{p}-3}}\right\} \nonumber\\
&  \leq C\Vert\digamma\Vert_{\tilde{Y}}. \label{L-continuity}%
\end{align}
Furthermore, we know that {\fontsize{10}{10}\selectfont%
\begin{align}
\Vert\mathcal{B}(\Phi,\Psi)\Vert_{\tilde{X}}  &  =\Vert(-\mu\Delta)^{-1}%
(\beta_{1}(\Phi_{2},\Psi_{2})-\beta_{1}(\Phi_{1},\Psi_{1}))\Vert
_{\mathcal{N}_{p,q,1}^{\frac{3}{p}-1}}\nonumber\\
&  +\Vert(-\nu\Delta)^{-1}\beta_{2}(\Phi_{2},h\Psi_{3}-\Psi_{1})\Vert
_{\mathcal{N}_{p,q,r}^{\frac{3}{p}-1}}+\Vert(-\nu\Delta)^{-1}\nabla\times
\beta_{2}(curl^{-1}\Phi_{3},h\Psi_{3}-\Psi_{1})\Vert_{\mathcal{N}%
_{p,q,1}^{\frac{3}{p}-1}}. \label{Norm-B}%
\end{align}
} So let us estimate each term above separately. First, by Lemma
\ref{Mikhlin-Hormander} and Proposition \ref{Product:3/p-2}, it follows that%

\begin{align}
&  \Vert(-\mu\Delta)^{-1}(\beta_{1}(\Phi_{2},\Psi_{2})-\beta_{1}(\Phi_{1}%
,\Psi_{1}))\Vert_{\mathcal{N}_{p,q,1}^{\frac{3}{p}-1}}\nonumber\\
&  \leq C\left\{  \Vert\beta_{1}(\Phi_{2},\Psi_{2})\Vert_{\mathcal{N}%
_{p,q,1}^{\frac{3}{p}-3}}+\Vert\beta_{1}(\Phi_{1},\Psi_{1})\Vert
_{\mathcal{N}_{p,q,1}^{\frac{3}{p}-3}}\right\} \nonumber\\
&  \leq C\left\{  \Vert\Phi_{2}\otimes\Psi_{2}+\Psi_{2}\otimes\Phi_{2}%
\Vert_{\mathcal{N}_{p,q,1}^{\frac{3}{p}-2}}+\Vert\Phi_{1}\otimes\Psi_{1}%
+\Psi_{1}\otimes\Phi_{1}\Vert_{\mathcal{N}_{p,q,1}^{\frac{3}{p}-2}}\right\}
\nonumber\\
&  \leq C\left\{  \Vert\Phi_{2}\Vert_{\mathcal{N}_{p,q,2}^{\frac{3}{p}-1}%
}\Vert\Psi_{2}\Vert_{\mathcal{N}_{p,q,2}^{\frac{3}{p}-1}}+\Vert\Phi_{1}%
\Vert_{\mathcal{N}_{p,q,2}^{\frac{3}{p}-1}}\Vert\Psi_{1}\Vert_{\mathcal{N}%
_{p,q,2}^{\frac{3}{p}-1}}\right\} \nonumber\\
&  \leq C\left\{  \Vert\Phi_{2}\Vert_{\mathcal{N}_{p,q,r}^{\frac{3}{p}-1}%
}\Vert\Psi_{2}\Vert_{\mathcal{N}_{p,q,r}^{\frac{3}{p}-1}}+\Vert\Phi_{1}%
\Vert_{\mathcal{N}_{p,q,1}^{\frac{3}{p}-1}}\Vert\Psi_{1}\Vert_{\mathcal{N}%
_{p,q,1}^{\frac{3}{p}-1}}\right\} \nonumber\\
&  \leq C\Vert\Phi\Vert_{\tilde{X}}\Vert\Psi\Vert_{\tilde{X}}. \label{term1}%
\end{align}
Now, applying again Lemma \ref{Mikhlin-Hormander} and Proposition
\ref{Product:3/p-2} , we obtain
\begin{align}
&  \Vert(-\nu\Delta)^{-1}\beta_{2}(\Phi_{2},h\Psi_{3}-\Psi_{1})\Vert
_{\mathcal{N}_{p,q,r}^{\frac{3}{p}-1}}\nonumber\\
&  \leq C\Vert\beta_{2}(\Phi_{2},h\Psi_{3}-\Psi_{1})\Vert_{\mathcal{N}%
_{p,q,r}^{\frac{3}{p}-3}}\nonumber\\
&  \leq C\left\{  \Vert\Phi_{2}\otimes(h\Psi_{3}-\Psi_{1})\Vert_{\mathcal{N}%
_{p,q,r}^{\frac{3}{p}-2}}+\Vert(h\Psi_{3}-\Psi_{1})\otimes\Phi_{2}%
\Vert_{\mathcal{N}_{p,q,r}^{\frac{3}{p}-2}}\right\} \nonumber\\
&  \leq C\Vert\Phi_{2}\Vert_{\mathcal{N}_{p,q,r}^{\frac{3}{p}-1}}(\Vert
\Psi_{3}\Vert_{\mathcal{N}_{p,q,\infty}^{\frac{3}{p}-1}}+\Vert\Psi_{1}%
\Vert_{\mathcal{N}_{p,q,\infty}^{\frac{3}{p}-1}})\nonumber\\
&  \leq C\Vert\Phi_{2}\Vert_{\mathcal{N}_{p,q,r}^{\frac{3}{p}-1}}(\Vert
\Psi_{3}\Vert_{\mathcal{N}_{p,q,1}^{\frac{3}{p}-1}}+\Vert\Psi_{1}%
\Vert_{\mathcal{N}_{p,q,1}^{\frac{3}{p}-1}})\nonumber\\
&  \leq C\Vert\Phi\Vert_{\tilde{X}}\Vert\Psi\Vert_{\tilde{X}}. \label{term2}%
\end{align}
Finally, Lemma \ref{Mikhlin-Hormander} and Proposition \ref{Product:3/p-1}
lead us to the estimate
\begin{align}
&  \Vert(-\nu\Delta)^{-1}\nabla\times\beta_{2}(curl^{-1}\Phi_{3},h\Psi
_{3}-\Psi_{1})\Vert_{\mathcal{N}_{p,q,1}^{\frac{3}{p}-1}}\nonumber\\
&  \leq C\Vert\beta_{2}(curl^{-1}\Phi_{3},h\Psi_{3}-\Psi_{1})\Vert
_{\mathcal{N}_{p,q,1}^{\frac{3}{p}-2}}\nonumber\\
&  \leq C\left\{  \Vert(curl^{-1}\Phi_{3})\otimes(h\Psi_{3}-\Psi_{1}%
)\Vert_{\mathcal{N}_{p,q,1}^{\frac{3}{p}-1}}+\Vert(h\Psi_{3}-\Psi_{1}%
)\otimes(curl^{-1}\Phi_{3})\Vert_{\mathcal{N}_{p,q,1}^{\frac{3}{p}-1}}\right\}
\nonumber\\
&  \leq C\Vert curl^{-1}\Phi_{3}\Vert_{\mathcal{N}_{p,q,1}^{\frac{3}{p}}%
}(\Vert\Psi_{3}\Vert_{\mathcal{N}_{p,q,1}^{\frac{3}{p}-1}}+\Vert\Psi_{1}%
\Vert_{\mathcal{N}_{p,q,1}^{\frac{3}{p}-1}})\nonumber\\
&  \leq C\Vert\Phi_{3}\Vert_{\mathcal{N}_{p,q,1}^{\frac{3}{p}-1}}(\Vert
\Psi_{3}\Vert_{\mathcal{N}_{p,q,1}^{\frac{3}{p}-1}}+\Vert\Psi_{1}%
\Vert_{\mathcal{N}_{p,q,1}^{\frac{3}{p}-1}})\nonumber\\
&  \leq C\Vert\Phi\Vert_{\tilde{X}}\Vert\Psi\Vert_{\tilde{X}}. \label{term3}%
\end{align}
Then, linking equality (\ref{Norm-B}) with estimates (\ref{term1}),
(\ref{term2}) and (\ref{term3}), it is clear that
\begin{equation}
\Vert\mathcal{B}(\Phi,\Psi)\Vert_{\tilde{X}}\leq C\Vert\Phi\Vert_{\tilde{X}%
}\Vert\Psi\Vert_{\tilde{X}}. \label{B-continuity}%
\end{equation}
Therefore, estimates (\ref{L-continuity}) and (\ref{B-continuity}) guarantee
that we are under the assumptions of Lemma \ref{auxiliar1}, which enables us
to apply it and conclude the result.

\begin{flushright}
\ding{110}
\end{flushright}

\section{Well-posedness of the stationary Navier-Stokes equations}

\label{Sec5} In this section, we focus on the proof of Theorem \ref{Result2},
which, as mentioned earlier, will be carried out with the aid of Lemma
\ref{auxiliar1} and the product estimate obtained through Proposition
\ref{Product:3/p-2}.

\vspace{0,5cm}

\noindent{\textbf{Proof of Theorem \ref{Result2}.}} Let $n\geq3$, $1\leq q\leq
p<\infty$, $1\leq r\leq\infty$, and suppose that
\[
p<\frac{n}{2}\quad\text{or}\quad\frac{n}{2}\leq p<n\quad\text{and}\quad
q\geq\frac{3p+n}{2n}.
\]
Applying the Leray projector $\mathcal{P}$ in $(\ref{aux-NS100})_{1}$, we
arrive at
\begin{equation}
-\mu\Delta u+\mathcal{P}\dv(u\otimes u)=\mathcal{P}f. \label{NS-projected}%
\end{equation}
Then, taking $\mathcal{L}:=(-\mu\Delta)^{-1}\mathcal{P}$ and $\mathcal{B}%
:=-(-\mu\Delta)^{-1}\mathcal{P}\dv(.\otimes.)$, equation (\ref{NS-projected})
is equivalent to
\[
u=\mathcal{L}f+\mathcal{B}(u,u).
\]
Now, note that, by means of Lemma \ref{Mikhlin-Hormander}, we have
\begin{equation}
\Vert\mathcal{L}f\Vert_{\mathcal{N}_{p,q,r}^{\frac{n}{p}-1}}=\Vert(-\mu
\Delta)^{-1}\mathcal{P}f\Vert_{\mathcal{N}_{p,q,r}^{\frac{n}{p}-1}}\leq C\Vert
f\Vert_{\mathcal{N}_{p,q,r}^{\frac{n}{p}-3}}. \label{L-continuity-NS}%
\end{equation}
Furthermore, Lemma \ref{Mikhlin-Hormander} and Proposition \ref{Product:3/p-2}
yield the estimate
\begin{align}
\Vert\mathcal{B}(v,w)\Vert_{\mathcal{N}_{p,q,r}^{\frac{n}{p}-1}}  &
=\Vert-(-\mu\Delta)^{-1}\mathcal{P}\dv(v\otimes w)\Vert_{\mathcal{N}%
_{p,q,r}^{\frac{n}{p}-1}}\nonumber\\
&  \leq C\Vert\dv(v\otimes w)\Vert_{\mathcal{N}_{p,q,r}^{\frac{n}{p}-3}%
}\nonumber\\
&  \leq C\Vert v\otimes w\Vert_{\mathcal{N}_{p,q,r}^{\frac{n}{p}-2}%
}\nonumber\\
&  \leq C\Vert v\Vert_{\mathcal{N}_{p,q,r}^{\frac{n}{p}-1}}\Vert
w\Vert_{\mathcal{N}_{p,q,\infty}^{\frac{n}{p}-1}}\nonumber\\
&  \leq C\Vert v\Vert_{\mathcal{N}_{p,q,r}^{\frac{n}{p}-1}}\Vert
w\Vert_{\mathcal{N}_{p,q,r}^{\frac{n}{p}-1}}. \label{B-continuity-NS}%
\end{align}
Hence, by estimates (\ref{L-continuity-NS}) and (\ref{B-continuity-NS}), we
have that $\mathcal{L}$ and $\mathcal{B}$ fulfill the requirements of Lemma
\ref{auxiliar1}. Consequently, equation (\ref{NS-projected}) is well-posed
from $\mathcal{N}_{p,q,r}^{\frac{n}{p}-3}$ to $\mathcal{N}_{p,q,r}^{\frac
{n}{p}-1}$, which proves the result.

\begin{flushright}
\ding{110}
\end{flushright}

\

\noindent\textbf{Acknowledgments.} LCFF was supported by CNPq (grant:
312484/2023-2), Brazil. RPS was supported by UTFPR, Brazil.

\thispagestyle{empty} \vspace{0.5cm}

\noindent\textbf{Conflict of interest statement.} The authors declare that
they have no conflict of interest.

\

\noindent\textbf{Data availability statement.} This manuscript has no
associated data.

\end{document}